\documentclass[11pt, a4paper]{amsart}
\usepackage{amssymb}
\usepackage{amsmath}
\usepackage{amssymb}
\usepackage{amsthm}
\usepackage{bbm}
\usepackage{bm}
\usepackage{mathtools}
\usepackage{mathrsfs}
\usepackage[T1]{fontenc}
\usepackage[usenames,dvipsnames,svgnames,table]{xcolor}
\usepackage{esint}
\allowdisplaybreaks

\usepackage{enumitem}
\setenumerate{label={\rm (\alph{*})}}
\usepackage[colorlinks=true,citecolor=black,urlcolor=black,
linkcolor=black]{hyperref}

\usepackage{graphicx}
\usepackage{tikz}

\usepackage{geometry}
\geometry{a4paper,left=37mm,right=37mm, top=37mm, bottom=30mm} 

\makeatletter
\newcommand*\bigcdot{\mathpalette\bigcdot@{.5}}
\newcommand*\bigcdot@[2]{\mathbin{\vcenter{\hbox{\scalebox{#2}{$\m@th#1\bullet$}}}}}
\makeatother

\numberwithin{equation}{section}

\newtheorem{theorem}{Theorem}[section]
\newtheorem{lemma}[theorem]{Lemma}
\newtheorem{proposition}[theorem]{Proposition}

\newtheorem{definition}[theorem]{Definition}

\newtheorem{remark}[theorem]{Remark}
\newtheorem{example}[theorem]{Example}

\numberwithin{equation}{section}

\setcounter{tocdepth}{2} 
\setcounter{secnumdepth}{3}


\usepackage{booktabs}

\normalsize

\def\XXint#1#2#3{{\setbox0=\hbox{$#1{#2#3}{\int}$ }
\vcenter{\hbox{$#2#3$ }}\kern-.6\wd0}}

\newcommand{\bd}{\operatorname{BD}}
\newcommand{\bv}{\operatorname{BV}}

\newcommand{\di}{\operatorname{div}}

\newcommand{\dif}{\operatorname{d}\!}

\newcommand{\singspt}{\mathrm{sing\,spt\,}}

\newcommand{\spt}{\operatorname{spt}}

\newcommand{\R}{\mathbb{R}}
\newcommand{\A}{\mathbb{A}}
\newcommand{\B}{\mathbb{B}}

\newcommand{\locc}{\operatorname{loc}}

\newcommand{\ball}{B}

\newcommand{\sobo}{\operatorname{W}}

\newcommand{\lebe}{\operatorname{L}}

\newcommand{\hold}{\operatorname{C}}

\newcommand{\D}{D}
\newcommand{\curl}{\operatorname{curl}}
\renewcommand{\leq}{\leqslant}

\newcommand{\imag}{\operatorname{i}}
\newcommand{\id}{\operatorname{Id}}

\newcommand{\e}{\operatorname{e}}

\renewcommand{\di}{\operatorname{div}}
\newcommand{\bmo}{\operatorname{BMO}}

\newcommand{\asym}{\operatorname{asym}}

\newcommand{\lin}{\operatorname{Lin}}

\renewcommand{\e}{\operatorname{e}}

\newcommand{\taylor}{\operatorname{t}}
\newcommand{\Taylor}{\operatorname{T}}
\usepackage{tikz-cd}
\setcounter{tocdepth}{1}

\AtEndDocument{\bigskip{\footnotesize%
  \textsc{Andrew Wiles Building, University of Oxford, Woodstock Rd, Oxford OX2 6GG.} \par  
  \noindent\textit{E-mail address}, B.~Rai\c{t}\u{a}: \texttt{raita@maths.ox.ac.uk}
}}

\begin{document}
\title[Critical $\lebe^p$--differentiability of $\bv^\A$--maps]{Critical $\lebe^p$--differentiability of $\bv^\A$--maps\\ and canceling operators}
\author[B. Rai\c{t}\u{a}]{Bogdan Rai\c{t}\u{a}}
\thanks{This work was supported by Engineering and Physical Sciences Research Council Award EP/L015811/1. This project has received funding from the European Research Council (ERC) under the European Union's Horizon 2020 research and innovation programme under grant agreement No 757254 (SINGULARITY)}
\subjclass[2010]{Primary: 26B05; Secondary: 35J48}
\keywords{Approximate differentiability, $\lebe^p$--Taylor expansions, Convolution operators, Functions with bounded variation, Critical embeddings, Sobolev inequalities.}
\begin{abstract}
We give a generalization of \textsc{Dorronsoro}'s Theorem \cite{Dor} on critical $\lebe^p$--Taylor expansions for $\bv^k$--maps on $\R^n$, i.e., we characterize homogeneous linear differential operators $\A$ of $k$--th order such that $D^{k-j}u$ has $j$--th order $\lebe^{n/(n-j)}$--Taylor expansion a.e. for all $u\in\bv^\A_{\locc}$ (here $j=1,\ldots, k$, with an appropriate convention if $j\geq n$). The space $\bv^\A_{\locc}$, a single framework covering $\bv$, $\bd$, and $\bv^k$, consists of those locally integrable maps $u$ such that $\A u$ is a Radon measure on $\R^n$.

For $j=1,\ldots,\min\{k, n-1\}$, we show that the $\lebe^p$--differentiability property above is equivalent with \textsc{Van Schaftingen}'s \emph{elliptic and canceling} condition for $\A$ \cite{VS}. For $j=n,\ldots, k$, ellipticity is necessary, but cancellation is not. To complete the characterization, we determine the class of elliptic operators $\A$ such that the estimate
\begin{align}\tag{1}\label{eq:abs}
\|D^{k-n}u\|_{\lebe^\infty}\leq C\|\A u\|_{\lebe^1}
\end{align}
holds for all vector fields $u\in\hold^\infty_c$. Surprisingly, the (computable) condition on $\A$ such that \eqref{eq:abs} holds is \emph{strictly} weaker than cancellation. 

The results on $\lebe^p$--differentiability can be formulated as sharp pointwise regularity results for overdetermined elliptic systems
\begin{align*}
\A u=\mu,
\end{align*}
where $\mu$ is a Radon measure, thereby giving a variant for the limit case $p=1$ of a Theorem of \textsc{Calder\'on} and \textsc{Zygmund} in \cite{CZ1} which was not covered before.
\end{abstract}
\maketitle
\section{Introduction}
\subsection{First order operators}
Approximate differentiability properties of weakly differentiable functions are well understood \cite[Ch.~6]{EG}. In particular, \textsc{Calder\'on} and \textsc{Zygmund} proved in \cite{CZ1,CZ2} that a map $u\in\bv_{\locc}(\R^n,\R^m)$ is $\lebe^{n/(n-1)}$--differentiable at $\mathscr{L}^n$--a.e. $x\in\R^n$, i.e., there exists a matrix $\nabla u(x)$ (called \emph{approximate gradient}) such that 
\begin{align*}
\|R^1_x u\|_{\lebe^{n/(n-1)}(\ball_r(x))}=o(r^n)\quad\text{ as }r\downarrow0.
\end{align*}
Here $R^1_x u$ denotes the first order Taylor remainder
\begin{align*}
(R^1_x u)(y)\coloneqq u(y)-u(x)-\nabla u(x)(y-x)
\end{align*}
for all $y\in\R^n$. It was recently proved in \cite[Cor.~2.6]{JS} and independently in \cite{GR2} that the same property holds true of the space $\bd$ of maps of bounded deformation, building on the results in \cite{ABC,ACDM,Haj}. The space $\bd$, arising naturally in plasticity problems \cite{AG,FS,ST}, is defined as the space of integrable maps $u\colon\R^n\rightarrow\R^n$ such that the distributional symmetrized gradient $\mathcal{E}u\coloneqq (\D u+\D u^\mathrm{t})/2$ is a bounded measure. The $\lebe^{n/(n-1)}$--differentiability of $\bd_{\locc}$--maps cannot be extrapolated from the $\bv$ result since by Ornstein's Non--inequality \cite{CFM,KirKri,Ornstein}, there are $\bd$--maps for which the full distributional gradient is not a bounded measure. Instead, the view point taken in \cite{ABC,Haj} is that test functions can be retrieved from their symmetrized gradient via convolution. In analogy with the fact that the Riesz potential $I_1$ is bounded from $\lebe^1$ into $\lebe^{n/(n-1)}_{\text{weak}}$ (and \emph{not} into $\lebe^{n/(n-1)}$), the harmonic analysis techniques used in \cite{ABC,Haj} yield $\lebe^p$--differentiability of $\bd_{\locc}$--maps for all $1\leq p<n/(n-1)$, but cannot easily cover the critical case. The idea used in \cite{GR2} was to use the embedding $\bd\hookrightarrow\lebe^{n/(n-1)}$ \cite[Prop.~1.2]{ST} in the form of a Poincar\'e--Sobolev--type inequality. In fact, the arguments in \cite{GR2} cover a class of differential operators that contains $\mathcal{E}$. To be precise, we define the space $\bv^\A(\Omega)$ as the space of $u\in\lebe^1(\Omega,V)$ such that $\A u\in\mathcal{M}(\Omega,W)$ is a bounded measure, for open sets $\Omega\subset\R^n$, where the differential operator $\A$ is defined, independently of coordinate choice, by
\begin{align}\label{eq:A}
\A u\coloneqq A( D u) 
\end{align}
for smooth maps $u\colon\R^n\rightarrow V$, a linear map $A\in\lin(V\otimes\R^n,W)$, and finite dimensional normed vector spaces $V,W$. For simplicity of exposition, we assume in addition that $\R^n$, $V$, $W$ are all equipped with fixed inner products. This is no restriction, as we will explain later. Although in this section we assume that $n>1$, the embedding presented in Theorem~\ref{thm:main_infty} will enable us to also cover the case $n=1$, which is ruled out for similar results, e.g., \cite[Thm.~3.4]{ABC} or \cite[Thm.~1.1]{GR2}.

The main result in \cite{GR2} states that if the solution space of $\A u=0$ in $\mathscr{D}^\prime(\R^n,V)$ is finite dimensional (when we say that $\A$ \emph{has FDN}), then any map in $\bv^\A_{\locc}$ is $\lebe^{n/(n-1)}$--differentiable $\mathscr{L}^n$--a.e.. Of course, this covers the physically relevant case $\bv^\A=\bd$, since $\mathcal{E}u=0$ is satisfied only by rigid deformations \cite[Thm.~3.2]{HN}. It is however natural to ask the mathematically relevant question to determine the class of operators $\A$ such that all maps in $\bv^\A_{\locc}$ are $\lebe^{n/(n-1)}$--differentiable $\mathscr{L}^n$--a.e.. What we know is that such a class contains the FDN operators and it is not difficult to see that it is contained in the class of elliptic operators. Indeed, we will establish this in Lemma~\ref{lem:nec_ell_appdiff} (recall that an operator $\A$ as in \eqref{eq:A} is (overdetermined) \emph{elliptic} if the symbol map $V\ni v\mapsto A(v\otimes\xi)\eqqcolon \A[\xi]v$ is injective). It was already shown in \cite[Rk.~3.2]{GR2} that ellipticity is not sufficient for differentiability. On the other hand, as explained in \cite{BDG,GR}, the FDN condition is equivalent with boundary regularity, respectively, of traces of maps in $\bv^\A(\Omega)$ and of solutions of $\A u=0$ in $\mathscr{D}^\prime(\Omega,V)$. It does not a priori seem likely that FDN is equivalent to a pointwise regularity property, as is $\lebe^{n/(n-1)}$--differentiability. In light of the proof of \cite[Thm.~1.1]{GR2}, it is however feasible that $\lebe^{n/(n-1)}$--differentiability of $\bv^\A_{\locc}$--maps is equivalent with a homogeneous Sobolev--type embedding. Indeed, we will prove in Lemma~\ref{lem:suff_EC} that the embedding
\begin{align}\label{eq:VS_appdiff}
\|u\|_{\lebe^{n/(n-1)}}\lesssim\|\A u\|_{\lebe^1}
\end{align}
for all $u\in\hold^\infty_c(\R^n,V)$ is sufficient to prove the required $\lebe^{n/(n-1)}$--differentiability. The precise conditions on $\A$ for \eqref{eq:VS_appdiff} to hold, namely elliptic and \emph{canceling} (EC), were established in \cite{BB07,VS}. The canceling condition, i.e.,
\begin{align*}
\bigcap_{\xi\in \mathbb{S}^{n-1}}\mathrm{im\,}\A[\xi]=\{0\},
\end{align*}
was introduced by \textsc{Van Schaftingen} in \cite{VS} and can indeed be interpreted as an interior regularity (integrability) condition, as we will discuss in Lemma~\ref{lem:canc_char}. In fact, in view of the Lorentz space embedding \cite[Thm.~8.5]{VS} (see also \cite{Peetre_lore,Alvino,Hunt,Tartar_lorentz,Tartar} for previous results on Lorentz--Sobolev embeddings), we say that a map $u$ is $\lebe^{n/(n-1),q}$--differentiable at $x$ if
\begin{align*}
\|R_x u\|_{\lebe^{n/(n-1),q}(\ball_r(x))}=o(r^n)\quad\text{ as }r\downarrow0, 
\end{align*}
where $1\leq q\leq \infty$, then we can obtain the following refined statement:
\begin{theorem}\label{thm:main_k=1}
Let $\A$ be as in \eqref{eq:A}, $1<q<\infty$, $n>1$. Then $\A$ is EC if and only if all maps in $\bv^\A_{\locc}$ are $\lebe^{n/(n-1),q}$--differentiable $\mathscr{L}^n$--a.e.
\end{theorem}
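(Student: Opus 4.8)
\emph{Proof proposal.} The statement is an equivalence, and the two implications would be established by rather different means: that $\A$ EC $\Rightarrow$ $\lebe^{n/(n-1),q}$--differentiability is a Lorentz refinement of Lemma~\ref{lem:suff_EC} fed by the Lorentz--Sobolev estimate \cite[Thm.~8.5]{VS}, while the converse combines Lemma~\ref{lem:nec_ell_appdiff} with an explicit counterexample built from the fundamental solution of $\A$.

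\textbf{Sufficiency.} Assume $\A$ is elliptic and canceling. By \cite[Thm.~8.5]{VS}, $\|v\|_{\lebe^{n/(n-1),1}}\lesssim\|\A v\|_{\lebe^1}$ for $v\in\hold^\infty_c(\R^n,V)$, hence $\|v\|_{\lebe^{n/(n-1),q}}\lesssim\|\A v\|_{\lebe^1}$ as well, since $\lebe^{n/(n-1),1}\hookrightarrow\lebe^{n/(n-1),q}$. I would then rerun the proof of Lemma~\ref{lem:suff_EC} with $\lebe^{n/(n-1)}$ replaced by $\lebe^{n/(n-1),q}$ throughout. For $u\in\bv^\A_\locc$ write $\A u=f\,\mathscr L^n+\mu^s$; at $\mathscr L^n$--a.e.\ $x$ the point $x$ is a Lebesgue point of $f$, $r^{-n}|\mu^s|(\ball_r(x))\to0$, and the approximate gradient $\nabla u(x)$ exists with $A(\nabla u(x))=f(x)$ (this last part uses ellipticity of $\A$, not cancellation). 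Setting $w_r(z)\coloneqq r^{-1}\bigl(u(x+rz)-u(x)\bigr)-\nabla u(x)z$, one has $\|\A w_r\|_{\mathcal M(\ball_1)}\leq r^{-n}\int_{\ball_r(x)}|f(y)-f(x)|\,\dif y+r^{-n}|\mu^s|(\ball_r(x))\to0$; representing $w_r$ locally as a potential of $\A w_r$ plus a solution $h_r$ of $\A h_r=0$ (possible since $\A$ is elliptic), the Lorentz estimate drives the potential part to $0$ in $\lebe^{n/(n-1),q}(\ball_1)$, while $w_r\to0$ in $\lebe^1(\ball_1)$ (approximate differentiability) forces $h_r\to0$ in $\lebe^1(\ball_1)$, hence in $\lebe^{n/(n-1),q}(\ball_{1/2})$ by interior estimates for the real-analytic solutions of $\A h=0$. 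Rescaling gives $\|R^1_x u\|_{\lebe^{n/(n-1),q}(\ball_r(x))}=o(r^n)$. The hypothesis $q<\infty$ enters only through the absolute continuity of the $\lebe^{n/(n-1),q}$--norm (which legitimises the Lebesgue-point reductions) and the embedding $\lebe^{n/(n-1),q}\hookrightarrow\lebe^1$ on balls.

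\textbf{Necessity.} Suppose every $u\in\bv^\A_\locc$ is $\lebe^{n/(n-1),q}$--differentiable $\mathscr L^n$--a.e. Since $\lebe^{n/(n-1),q}(\ball_r)\hookrightarrow\lebe^1(\ball_r)$, this entails approximate differentiability a.e., so $\A$ is elliptic by Lemma~\ref{lem:nec_ell_appdiff}. For cancellation I argue by contradiction: if $\A$ is not canceling, fix $0\neq\ell\in\bigcap_\xi\operatorname{im}\A[\xi]$. Since $\A[\xi]$ is injective and $\ell$ lies in its image for every $\xi\neq0$, the symbol $\xi\mapsto\A[\xi]^{-1}\ell$ is smooth and homogeneous of degree $-1$ off the origin, so its inverse Fourier transform is a kernel $m\colon\R^n\setminus\{0\}\to V$ that is smooth, homogeneous of degree $1-n$, not identically zero, satisfies $|m(x)|\geq c|x|^{1-n}$ for $x$ in some open cone, and solves $\A(m*g)=\ell g$ for finite measures $g$. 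Now choose poles $\{y_j\}$ accumulating onto a set $E$ of positive Lebesgue measure (e.g.\ on dyadic grids of vanishing mesh, or in the complement of a fat Cantor set), separated at each scale, with summable positive weights $c_j$, and put $u(x)\coloneqq\sum_j c_j\,m(x-y_j)$. Then $u\in\lebe^1_\locc(\R^n)$ and $\A u=\ell\sum_j c_j\delta_{y_j}$ is a finite measure, so $u\in\bv^\A_\locc$; but since $|x|^{1-n}\in\lebe^{n/(n-1),\infty}$ yet $\notin\lebe^{n/(n-1),q}$ on any ball containing the origin when $q<\infty$, and since near each $y_j$ the separation prevents the $y_j$--singularity from being cancelled by the others, one obtains $u\notin\lebe^{n/(n-1),q}(\ball_r(x))$ for all $r>0$ whenever $x\in E$; thus $u$ fails to be $\lebe^{n/(n-1),q}$--differentiable on $E$, contradicting the hypothesis.

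\textbf{Main obstacle.} The crux is the counterexample above. Any single potential $m*g$ is $\lebe^{n/(n-1),q}$--differentiable away from at most the countably many atoms of $g$, so the poles have to be condensed onto a positive-measure accumulation set, and one must then control the superposition of infinitely many homogeneous-degree-$(1-n)$ singularities --- in particular rule out cancellations between different scales --- so that the pointwise size of $u$ near $E$ genuinely obstructs local membership in $\lebe^{n/(n-1),q}$. The sufficiency direction is comparatively routine once \cite[Thm.~8.5]{VS} is in hand, but still needs the full apparatus of Lemma~\ref{lem:suff_EC} transcribed to Lorentz scales: a Lorentz Poincar\'e--Sobolev inequality, interior estimates for $\A h=0$ in $\lebe^{n/(n-1),q}$, and the $\mathscr L^n$--a.e.\ existence of approximate gradients of $\bv^\A_\locc$--maps for elliptic $\A$.
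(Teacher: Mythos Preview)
Your sufficiency argument has the right ingredients but is more circuitous than necessary, and the potential-plus-harmonic decomposition hides a subtlety: for an overdetermined operator $\A$ the kernel $K^\A$ from Lemma~\ref{lem:conv} is only a \emph{left} inverse (i.e., $K^\A\star\A u=u$ for compactly supported $u$), so $\A(K^\A\star\mu)$ need not equal $\mu$ for an arbitrary measure $\mu$, and even restricting $\mu=\A w_r$ to a ball can destroy the algebraic compatibility needed to make $h_r\coloneqq w_r-K^\A\star(\A w_r\mres \ball_2)$ genuinely $\A$-free. The paper bypasses this entirely: one multiplies $v\coloneqq R^1_xu$ by a cut-off $\rho\in\hold^\infty_c(\ball_{2r}(x))$ with $\rho\equiv1$ on $\ball_r(x)$, applies the Lorentz estimate \cite[Thm.~8.5]{VS} to mollifications of $\rho v$, and passes to the limit via Fatou and $\A$-strict density, yielding directly
\[
\|v\|_{\lebe^{n/(n-1),q}(\ball_r(x))}\lesssim|\A v|\bigl(\overline{\ball_{2r}(x)}\bigr)+r^{-1}\|v\|_{\lebe^1(\ball_{2r}(x))}.
\]
Both terms on the right are $o(r^n)$ by Lemma~\ref{lem:algebra_Au} and Lebesgue differentiation---no decomposition, no interior estimates for $\A$-free maps.

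For the necessity of cancellation you take a genuinely different route from the paper. The paper uses a Baire category argument on $\mathfrak X=\bv^\A(\ball)$: the sets $X_{l,m,L}=\{u\in\mathfrak X:\|u\|_{\lebe^{n/(n-1),q}(\ball(x_l,m^{-1})\cap\ball)}\leq L\}$ are closed (Fatou for Lorentz norms) and nowhere dense (each sits inside the proper subspace $X_{l,m,\infty}$, proper because the single profile $u_h(\cdot-x_l)$ from Lemma~\ref{lem:canc_char} lies outside). The complement of $\bigcup_{l,m,L}X_{l,m,L}$ is then residual, furnishing a $\bv^\A$-map not in $\lebe^{n/(n-1),q}$ on any ball. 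This needs nothing beyond the existence of \emph{one} bad function $u_h$.

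Your explicit construction, by contrast, confronts precisely the obstacle you flag: once the poles $\{y_j\}$ are dense enough for the accumulation set $E$ to have positive measure, every neighbourhood of every $y_j$ contains infinitely many other poles, so the tail $\sum_{k\neq j}c_k\,m(\cdot-y_k)$ is itself singular near $y_j$ and there is no obvious mechanism preventing it from cancelling the $c_j\,m(\cdot-y_j)$ contribution in $\lebe^{n/(n-1),q}$. Since $m$ is $V$-valued and sign-changing on the sphere, positivity arguments are unavailable. Making this work would require a scale-separated construction with quantitative control on inter-scale interactions---doable in principle, but the Baire argument avoids all of it.
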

Our proof of sufficiency of EC is elementary and somewhat novel, in the sense that all other proofs of $\lebe^p$--differentiability of $\bv$-- or $\bd$--maps using Poincar\'e(--type) inequalities or Sobolev(--type) embeddings that we traced in the literature cannot easily be adjusted to rely on \eqref{eq:VS_appdiff} only (cp. \cite{ACDM,AFP,EG,GR2}). We also see no alternative way to link the equivalent condition EC to the critical differentiability, other than by use of \eqref{eq:VS_appdiff}. It is important to mention that it was proved in \cite[Sec.~3]{GR} that EC is strictly weaker than FDN, so the sufficiency part of Theorem~\ref{thm:main_k=1} is not a vacuous extension of \cite[Thm.~1.1]{GR2}. Our proof of Theorem~\ref{thm:main_k=1} can be used to show that at the endpoint $q=\infty$ the claim is equivalent with ellipticity of $\A$ alone. In view of \cite[Open Prob.~8.3]{VS}, we do not know whether the critical case $q=1$ can be achieved, except in the case $\A =\D$. It is known that the embedding
\begin{align*}
\dot{\sobo}{^{1,1}}(\R^n,\R^N)\hookrightarrow\lebe^{n/(n-1),1}(\R^n,\R^N)
\end{align*}
holds \cite{Dor,Stoly,Tartar}, and our method implies the following fact which we could not trace in the literature, but is probably known to experts::
\begin{proposition}
Let $u\in\bv_{\locc}(\R^n,\R^N)$. Then $u$ is $\lebe^{n/(n-1),1}$--differentiable at $\mathscr{L}^n$--a.e. $x\in\R^n$.
\end{proposition}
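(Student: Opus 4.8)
The plan is to run the blow-up scheme behind the sufficiency direction of Theorem~\ref{thm:main_k=1} (that is, of Lemma~\ref{lem:suff_EC}) in the Lorentz scale with the endpoint exponent $q=1$, taking $\A=\D$ and replacing the Sobolev embedding \eqref{eq:VS_appdiff} by the sharper embedding $\dot{\sobo}{^{1,1}}(\R^n,\R^N)\hookrightarrow\lebe^{n/(n-1),1}(\R^n,\R^N)$ of \cite{Dor,Stoly,Tartar}. First I would fix a \emph{good} point $x$: by \textsc{Calder\'on}--\textsc{Zygmund} \cite{CZ1,CZ2} (see also \cite[Ch.~6]{EG}), for $\mathscr{L}^n$--a.e.\ $x\in\R^n$ the approximate gradient $\nabla u(x)$ exists with $u$ being $\lebe^1$--approximately differentiable there, i.e.\ $r^{-1}\dashint_{\ball_r(x)}|R^1_xu|\,\dif y\to0$ as $r\downarrow0$; moreover $x$ is a Lebesgue point of the density $\nabla u$ of $\D u$, and, since $|\D^s u|\perp\mathscr{L}^n$, Besicovitch differentiation gives $|\D^s u|(\ball_r(x))=o(r^n)$, so altogether $|\D u-\nabla u(x)\mathscr{L}^n|(\ball_r(x))=o(r^n)$. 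It then suffices to prove $\|R^1_xu\|_{\lebe^{n/(n-1),1}(\ball_r(x))}=o(r^n)$ at each such $x$.

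Next I would rescale. Setting
\[ w_r(z)\coloneqq\frac1r\bigl(u(x+rz)-u(x)-r\,\nabla u(x)z\bigr),\qquad z\in\ball_2, \]
one checks, for $r$ small, that $w_r\in\bv(\ball_2,\R^N)$ with $|\D w_r|(\ball_2)=r^{-n}\,|\D u-\nabla u(x)\mathscr{L}^n|(\ball_{2r}(x))$ and $\|w_r\|_{\lebe^1(\ball_2)}\lesssim r^{-1}\dashint_{\ball_{2r}(x)}|R^1_xu|\,\dif y$, both of which tend to $0$ by the good-point properties. A change of variables in the Lorentz norm yields the scaling identity $\|R^1_xu\|_{\lebe^{n/(n-1),1}(\ball_r(x))}=r^{n}\|w_r\|_{\lebe^{n/(n-1),1}(\ball_1)}$ (using $n/p=n-1$ for $p=n/(n-1)$ and $R^1_xu(x+r\,\cdot)=r\,w_r$ on $\ball_1$), so the claim reduces to showing $\|w_r\|_{\lebe^{n/(n-1),1}(\ball_1)}\to0$.

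For this I would use a localized Sobolev--Poincar\'e inequality in the Lorentz scale: $\|w\|_{\lebe^{n/(n-1),1}(\ball_1)}\lesssim|\D w|(\ball_2)+\|w\|_{\lebe^1(\ball_2)}$ for all $w\in\bv(\ball_2,\R^N)$. This is obtained from $\dot{\sobo}{^{1,1}}\hookrightarrow\lebe^{n/(n-1),1}$ by cutting off with a fixed $\chi\in\ccinfty(\ball_2)$ equal to $1$ on $\ball_1$, so that $\chi w\in\bv(\R^n,\R^N)$ is compactly supported with $|\D(\chi w)|(\R^n)\lesssim|\D w|(\ball_2)+\|w\|_{\lebe^1(\ball_2)}$; mollifying $\chi w$ to $C^\infty_c$--maps without increasing the $\lebe^1$--mass of the gradient; applying the embedding; and letting the mollification parameter vanish, using that $f\mapsto\|f\|_{\lebe^{n/(n-1),1}}$ is lower semicontinuous under $\mathscr{L}^n$--a.e.\ convergence (which follows from $\|f\|_{\lebe^{p,1}}\sim\int_0^\infty\mathscr{L}^n(\{|f|>s\})^{1/p}\,\dif s$ and Fatou's lemma). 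Applied to $w_r$ this gives $\|w_r\|_{\lebe^{n/(n-1),1}(\ball_1)}\lesssim|\D w_r|(\ball_2)+\|w_r\|_{\lebe^1(\ball_2)}\to0$, and the scaling identity of the previous step concludes. Equivalently, once this localized inequality is in hand one may simply quote the proof of Lemma~\ref{lem:suff_EC} with $\A=\D$, since that argument uses the embedding only in the abstract form \eqref{eq:VS_appdiff}.

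The one genuinely delicate step is the last: producing the inhomogeneous localized inequality on a ball for $\bv$--maps out of the homogeneous embedding on $\R^n$, which a priori is stated for smooth (or $\sobo^{1,1}$) maps. One must combine the cutoff, the mollification and the lower semicontinuity of the Lorentz norm carefully, checking in particular that the mollified gradients converge in $\lebe^1$ to the total variation measure of $\chi w_r$, so that no mass is lost. Everything else is the standard blow-up argument for approximate differentiability, and the claim in the first step that the good-point conditions hold simultaneously $\mathscr{L}^n$--a.e.\ is classical, resting on the $\bv$ structure theorem together with Lebesgue and Besicovitch differentiation.
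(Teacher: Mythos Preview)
Your proposal is correct and follows essentially the same route as the paper: the paper does not give a separate proof of this Proposition but states that ``our method implies'' it, the method being exactly the argument of Lemma~\ref{lem:suff_EC} with the embedding $\dot{\sobo}{^{1,1}}\hookrightarrow\lebe^{n/(n-1),1}$ in place of \cite[Thm.~8.5]{VS}. Your rescaling to the unit ball is a cosmetic repackaging of the scale--$r$ estimate \eqref{eq:key_est}, and you correctly identify the only genuine step (cut--off, mollification, Fatou for the Lorentz norm) needed to pass from the homogeneous embedding on $\R^n$ to the localized inequality for $\bv$--maps---which is precisely what the paper does in the first displayed chain of inequalities in the proof of Lemma~\ref{lem:suff_EC}.
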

\subsection{A new $\lebe^\infty$--embedding}
The result of Theorem~\ref{thm:main_k=1} extends to higher order homogeneous operators, by which we mean
\begin{align}\label{eq:Ak}
\A u\coloneqq A(D^k u)
\end{align}
for smooth maps $u\colon\R^n\rightarrow V$ and $A\in\lin(V\odot^{k}\R^n,W)$. Here $V\odot^k\R^n$ denotes the vector space of symmetric, $V$--valued, $k$--linear maps on $\R^n$. More precisely, in order to investigate the $\lebe^p$--differentiability properties of the derivatives of maps in $\bv^\A_{\locc}$, we will use the following inequality for EC operators
\begin{align}\label{eq:VS_j}
\|\D^{k-j}u\|_{\lebe^{n/(n-j)}}\lesssim\|\A u\|_{\lebe^1}
\end{align}
for $u\in\hold^\infty_c(\R^n,V)$. This follows by \cite[Thm.~1.3]{VS} and iterative application of the Gagliardo--Nirenberg--Sobolev Inequality \emph{only} for $j=1\ldots \min\{k,n-1\}$. If $k\geq n$, the boundedness of $D^{k-n}u$ cannot be inferred from \eqref{eq:VS_j} for $j=n-1$ since $\dot{\sobo}{^{1,n}}$ does not embed into $\lebe^\infty$, but $\bmo$. This phenomenon is already observed in the case $\A=D^n$, in which case one has to use a different method\footnote{Simply the Fundamental Theorem of Calculus in all coordinate directions, in this case.} to show that $\dot{\sobo}{^{n,1}}$ does indeed embed in $\lebe^\infty$ ($\hold_0$, even). In more generality, it was shown by \textsc{Bousquet} and \textsc{Van Schaftingen} in \cite[Thm.~1.3]{BVS} that if $k\geq n$ and $\A$ is EC, then
\begin{align}\label{eq:VS_k=n}
\|\D^{k-n}u\|_{\lebe^\infty}\lesssim\|\A u\|_{\lebe^1}
\end{align}
for all $u\in\hold^\infty_c(\R^n,V)$. The definition of EC is formally the same as in the first order case, with the obvious modification of the definition of the symbol map $\A[\xi]v\coloneqq A(v\otimes^k\xi)$. Here $v\otimes^k\xi\coloneqq v\otimes\xi\otimes\xi\otimes\ldots\otimes\xi$, where the exterior product is taken $k$ times.

So far, little is known about the necessity of the EC condition for \eqref{eq:VS_k=n}. Since for our $\lebe^p$--differentiability claims ellipticity is necessary (see Lemmas~\ref{lem:nec_ell_appdiff}, \ref{lem:nec_ell_k}), we will assume it. As for cancellation, it may not be necessary, as the simple example $\A=D$ if $n=1$ presented in \cite{BVS} suggests. One of the main results of this paper is to show that, somewhat surprisingly, \eqref{eq:VS_k=n} is equivalent to a substantially weaker new condition:
\begin{theorem}\label{thm:main_infty}
Suppose that $\A$ as in \eqref{eq:Ak} is elliptic, $k\geq n\geq1$. Then
\begin{align*}
\|D^{k-n}u\|_{\lebe^\infty(\R^n,V\odot^{k-n}\R^n)}\lesssim \|\A u\|_{\lebe^1(\R^n,V)}
\end{align*}
holds for all $u\in\hold^\infty_c(\R^n,V)$ if and only if 
\begin{align}\tag{WC}\label{eq:mistery_cond2}
\mathcal{L}w\coloneqq\int_{\mathbb{S}^{n-1}}\A^\dagger[\xi]w\otimes^{k-n}\xi\dif\mathscr{H}^{n-1}(\xi)=0~\text{ for all }~w\in\bigcap_{\xi\in\mathbb{S}^{n-1}}\mathrm{im\,}\A[\xi],
\end{align}
where $\mathcal{L}\in\lin(W,V\odot^{k-n}\R^n)$ is defined on the entire $W$ by the same formula.
\end{theorem}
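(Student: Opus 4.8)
The plan is to convert the $\lebe^\infty$ estimate into the existence of a bounded convolution kernel, and then to settle that with a classical description of the Fourier transform of a homogeneous distribution of degree $-n$. Since $\A$ is elliptic, $\A[\xi]$ is injective for $\xi\neq 0$, its Moore--Penrose pseudoinverse $\A^\dagger[\xi]$ is a smooth left inverse, homogeneous of degree $-k$, and the symbol calculus gives, for $u\in\hold^\infty_c(\R^n,V)$,
\[
\widehat{\D^{k-n}u}(\xi)=c_n\,m(\xi)\,\widehat{\A u}(\xi),\qquad m(\xi)w:=\big(\A^\dagger[\xi]w\big)\otimes^{k-n}\xi,\quad c_n\neq0.
\]
Writing $m(\xi)=|\xi|^{-n}\omega(\xi/|\xi|)$ with $\omega(\theta)w=(\A^\dagger[\theta]w)\otimes^{k-n}\theta$, the map $\omega$ is smooth on $\mathbb{S}^{n-1}$, has parity $(-1)^n$ (because $\A[-\xi]=(-1)^k\A[\xi]$), and satisfies $\int_{\mathbb{S}^{n-1}}\omega\,\dif\mathscr H^{n-1}=\mathcal L$. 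Two remarks are used throughout: \eqref{eq:mistery_cond2} is automatic when $n$ is odd, since the integrand is then odd; and $m(\xi)$ may be replaced by $m(\xi)+R(\xi)$ for any $R$ homogeneous of degree $-n$ with $R(\xi)\A[\xi]=0$ for all $\xi$, without altering $\D^{k-n}u$, because $\mathcal F^{-1}[R]\ast\A u\equiv0$ for such $R$.

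The analytic core, which I would either cite or prove by a spherical-harmonic decomposition, is this: if $\varpi$ is a smooth function on $\mathbb{S}^{n-1}$ with values in a fixed finite-dimensional space, of parity $(-1)^n$, and $|\xi|^{-n}\varpi(\xi/|\xi|)$ is extended to a tempered distribution by taking its finite part, then $\mathcal F^{-1}\big[|\xi|^{-n}\varpi(\xi/|\xi|)\big]$ agrees on $\R^n\setminus\{0\}$ with $a\log|x|+\Phi(x)$, where $a=c\int_{\mathbb{S}^{n-1}}\varpi\,\dif\mathscr H^{n-1}$ for a fixed nonzero scalar $c$ and $\Phi$ is homogeneous of degree $0$ and smooth away from the origin; consequently this inverse transform lies in $\lebe^\infty$ if and only if $\int_{\mathbb{S}^{n-1}}\varpi\,\dif\mathscr H^{n-1}=0$. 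In the harmonic-analysis proof one decomposes $\varpi=\bar\varpi+\sum_{\ell\geq1}\varpi_\ell$ into spherical harmonics: $|\xi|^{-n}\bar\varpi$ is a constant times $\mathcal F^{-1}[|\xi|^{-n}]\sim\log|x|$ (the only logarithmic contribution), while by the Bochner--Hecke formulas each $|\xi|^{-n}\varpi_\ell(\xi/|\xi|)$, $\ell\geq1$, has inverse transform homogeneous of degree $0$, and the series converges because $\varpi$ is smooth. When $n$ is odd only odd $\ell$ occur and the degree-zero part is absent, matching the first remark above.

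For necessity, suppose the estimate holds. The translation-commuting linear map $\A u\mapsto\D^{k-n}u$ is then bounded from $\A[\hold^\infty_c]\subset\lebe^1$ to $\lebe^\infty$, so a Hahn--Banach argument furnishes $K\in\lebe^\infty(\R^n;\lin(W,V\odot^{k-n}\R^n))$ with $\D^{k-n}u=K\ast\A u$ for all $u\in\hold^\infty_c$. Comparing Fourier transforms, after mollifying $K$ and testing against frequency-localized data (using that $\widehat{\A u}(\xi)$ exhausts $\mathrm{im}\,\A[\xi]$), one gets $\widehat K(\xi)\A[\xi]=c_n m(\xi)\A[\xi]$, so $\widehat K(\xi)-c_n m(\xi)$ annihilates $\mathrm{im}\,\A[\xi]$. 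Restricting to $w$ in $W_0:=\bigcap_{\xi\in\mathbb{S}^{n-1}}\mathrm{im}\,\A[\xi]$, for which $w\in\mathrm{im}\,\A[\xi]$ for every $\xi$, gives $\widehat K(\xi)w=c_n|\xi|^{-n}\omega(\xi/|\xi|)w$; since $x\mapsto K(x)w\in\lebe^\infty$, the analytic core forces $\int_{\mathbb{S}^{n-1}}\omega(\theta)w\,\dif\mathscr H^{n-1}(\theta)=0$, i.e. $\mathcal L w=0$. This is exactly \eqref{eq:mistery_cond2}.

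For sufficiency, assume $\mathcal L|_{W_0}=0$, and let $\Pi_\xi$ be the orthogonal projection of $W$ onto $(\mathrm{im}\,\A[\xi])^\perp$, which is smooth in $\xi\neq0$ and even by ellipticity and constancy of the rank. I would produce $S\in\hold^\infty(\mathbb{S}^{n-1};\lin(W,V\odot^{k-n}\R^n))$ of parity $(-1)^n$ with $\int_{\mathbb{S}^{n-1}}S(\theta)\Pi_\theta\,\dif\mathscr H^{n-1}(\theta)=-\mathcal L$; then $R(\xi):=|\xi|^{-n}S(\xi/|\xi|)\Pi_{\xi/|\xi|}$ is homogeneous of degree $-n$, smooth away from $0$, and kills $\mathrm{im}\,\A[\xi]$, the corrected multiplier $\omega+S\Pi$ has vanishing spherical mean, so the analytic core yields $K:=\mathcal F^{-1}[c_n(m+R)]\in\lebe^\infty$ with $K\ast\A u=\D^{k-n}u$, which is the desired estimate. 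The existence of $S$ is a linear-algebra point: the map $S\mapsto\int_{\mathbb{S}^{n-1}}S\Pi\,\dif\mathscr H^{n-1}$ has image exactly $\{T:T|_{W_0}=0\}$ — the inclusion ``$\subseteq$'' is clear since $\Pi_\theta$ vanishes on $W_0$, and ``$\supseteq$'' follows by choosing finitely many $\theta_1,\dots,\theta_m$ with $\bigcap_i\mathrm{im}\,\A[\theta_i]=W_0$ (a finite descent in dimension), letting $S$ range over functions supported in small symmetric neighbourhoods of $\pm\theta_1,\dots,\pm\theta_m$, and perturbing off the surjection $\sum_i S_i\Pi_{\theta_i}$ between finite-dimensional spaces; since $-\mathcal L$ vanishes on $W_0$ by hypothesis, a suitable $S$ exists. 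I expect the genuine difficulty to lie in the second paragraph — the canonical regularization, the clean isolation of the logarithmic term, and its application to the $\lebe^\infty$ kernel $K$ on the subspace $W_0$ — with the finite-dimensional surjectivity construction in the sufficiency direction a secondary technical point.
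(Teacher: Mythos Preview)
Your argument is correct, but both directions proceed along routes genuinely different from the paper's.

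\textbf{Sufficiency.} The paper does \emph{not} modify the multiplier. It keeps the kernel $\check L=H_0+\log|\cdot|\,\mathcal L$ with its logarithmic singularity and instead proves the estimate $\|\log|\cdot|\star(\mathcal L\A u)\|_{\lebe^\infty}\lesssim\|\A u\|_{\lebe^1}$ directly, via a variant of the Bousquet--Van~Schaftingen annihilator lemma (Lemma~\ref{lem:BVS_var}): one uses an exact annihilator $\mathcal A$ of $\A$ and the fact that, under \eqref{eq:mistery_cond2}, $\mathrm{im\,}\mathcal L^*\cap\bigcap_\xi\ker\mathcal A[\xi]=\{0\}$, to build a polynomial right inverse $P$ of $\mathcal A^*$ and integrate by parts, leaving only terms where at least one derivative hits $\log|\cdot|$. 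Your approach instead exploits the freedom to alter the multiplier on $(\mathrm{im\,}\A[\xi])^\perp$ without changing its action on $\widehat{\A u}$, and uses \eqref{eq:mistery_cond2} precisely as the linear--algebraic condition allowing the spherical mean to be corrected to zero; then the kernel is genuinely bounded and Young's inequality finishes. Your route is more elementary in that it avoids the annihilator machinery altogether and yields the pleasant by-product that, under \eqref{eq:mistery_cond2}, $D^{k-n}u=K\star\A u$ for some $K\in\lebe^\infty$. The paper's route, by contrast, ties the result into the Bourgain--Brezis/Van~Schaftingen framework and produces a reusable cancellation lemma.

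\textbf{Necessity.} The paper argues by explicit construction: if $\mathcal Lw\neq0$ for some $w\in\bigcap_\xi\mathrm{im\,}\A[\xi]$, then $u_h:=K^\A w$ solves $\A u_h=\delta_0 w$ and $D^{k-n}u_h=H_0w+\log|\cdot|\,\mathcal Lw$ is unbounded near the origin, contradicting the estimate (after a routine approximation). Your Hahn--Banach route is more indirect but valid; the one point to make precise is that $\widehat K$ and $c_n m$ agree on $\mathrm{im\,}\A[\xi]$ only as distributions on $\R^n\setminus\{0\}$, so $\widehat Kw-c_n(m(\cdot)w)^{\bigcdot}$ is supported at $0$ and $Kw$ differs from $H_0w+\log|\cdot|\,\mathcal Lw$ by a polynomial, which boundedness of $Kw$ then forces to be constant together with $\mathcal Lw=0$. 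With that caveat your necessity argument is complete.
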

We use the notation $\A^\dagger[\xi]\coloneqq(\A^*[\xi]\A[\xi])^{-1}\A^*[\xi]$ for $\xi\neq0$ and $\coloneqq0$ otherwise, where ``$\,^*\,$'' denotes the adjoint. $\A^\dagger[\cdot]$ is well--defined by the ellipticity assumption.

The importance of Theorem~\ref{thm:main_infty} for $\lebe^1$--estimates is that, under the ellipticity assumption, it settles the investigation of Sobolev estimates of the type \eqref{eq:VS_j} with the endpoint case $j=n$. In the absence of ellipticity, there is no systematic method to obtain even weak--type estimates, which should be a simple baseline if $j<n$; only few examples have been considered, see e.g., \cite[Prop.~5.4]{VS} or \cite{PVS}.

We next discuss condition~\eqref{eq:mistery_cond2}, which is obviously weaker than the canceling condition, thereby recovering the result in \cite[Thm.~1.3]{BVS}. By a simple homogeneity consideration, one immediately notes that condition~\eqref{eq:mistery_cond2} is automatically satisfied if $n$ is odd (in which case $\mathcal{L}\equiv0$). In Section~\ref{sec:examples_appdiff}, we will give examples to show that in both even and odd dimensions, condition~\eqref{eq:mistery_cond2} is strictly weaker than the canceling condition. In Proposition~\ref{prop:char_mist_cond}, we will give an analytic characterization which highlights why the discrepancy is possible. On the other hand, Proposition~\ref{prop:char_mist_cond} reveals a common theme shared by \eqref{eq:VS_j} and \eqref{eq:VS_k=n}, which is in sharp contrast to Ornstein's Non--inequality. We will expand on this point in Section~\ref{sec:char_mist_cond}.

We next compare Theorem~\ref{thm:main_infty} with other instances when quantitative restrictions on the data of a partial differential equation imply everywhere continuity of the solution. An observation in this direction was made by \textsc{Stein} in \cite{Stein_note}, where it is shown, as a recovery of the fact that $\dot{\sobo}{^{1,n}}\not\hookrightarrow \hold_0$, that
\begin{align*}
\D u\in\lebe^{n,1}(\R^n,\R^n)\implies u\in\hold_0(\R^n),
\end{align*}
up to the choice of a representative. In the same vein, one can show that for each $l=1,\ldots,n$, we have
\begin{align}\label{eq:pizda}
D^l u\in\lebe^{n/l,1}\implies u\in\hold_0(\R^n),
\end{align}
(see for instance \cite{KorKri}, where a detailed analysis of this and related phenomena is performed; cp. \cite{CiPi1,CiPi2,CiPi3}). The case when $l<n$ follows by H\"older's inequality and duality of Lorentz spaces (see Lemma~\ref{lem:conv}), but for $l=n$, one cannot argue in the same way and should use \eqref{eq:VS_k=n} for $\A=D^n$, as $\lebe^{1,1}=\lebe^1$ (of course, the inequality \eqref{eq:VS_k=n} in this case was known before, see, e.g., \cite{PVS}). Should one try to generalize this to linear elliptic operators of order $l$ on $\R^n$, i.e.,
\begin{align}\label{eq:pula}
\A u\in\lebe^{n/l,1}(\R^n,W)\implies u\in\hold_0(\R^n),
\end{align}
the question is not very interesting for $l<n$ since, by boundedness of singular integrals (again, see Lemma~\ref{lem:conv}), we have that \eqref{eq:pula} reduces to \eqref{eq:pizda}. For $l=n$, the characterization of elliptic $\A$ satisfying \eqref{eq:pula} is provided by Theorem~\ref{thm:main_infty}. We expand on this point in a functional framework in Section~\ref{sec:WA1}. In the future, we hope to be able to tackle generalizations to non--linear problems with $\lebe^1$--data (cp. \cite{MingioneKuusi,ACS}).

As it stands, condition~\eqref{eq:mistery_cond2} depends on the choice of Euclidean structure on the domain and target spaces. In Remark~\ref{rk:invariance}, we will show that \eqref{eq:mistery_cond2} is independent of this choice, as is ellipticity and the estimate \eqref{eq:VS_k=n}. We chose to present Theorem~\ref{thm:main_infty} in this form since, represented in coordinates, condition~\eqref{eq:mistery_cond2} is computable, whereas its invariant form in \eqref{eq:inv_cond} is implicit.

We conclude this section with the minor remark that Theorem~\ref{thm:main_infty} enables us to consistently add the case $n=1$ to \textsc{Van Schaftingen}'s theory \cite{VS}.
\subsection{Higher order operators}
We have the following $\mathscr{L}^n$--a.e.--generalization of \textsc{Dorronsoro}'s \cite[Thm.~1]{Dor}, concerning critical $\lebe^p$--differentiability of $\bv^k$--maps:
\begin{theorem}\label{thm:main_k_diff}
Let $\A$ be as in \eqref{eq:Ak}. Then:
\begin{enumerate}
\item\label{itm:a} If $1\leq j\leq \min\{k,n-1\}$, the following are equivalent:
\begin{enumerate}
\item[$(\mathrm{i})$] For all $u\in\bv^\A_{\locc}(\R^n)$, we have that
\begin{align*}
\D^{k-j}u\in \taylor^{j,n/(n-j)}(x)\quad\text{ for }\mathscr{L}^n\text{--a.e. }x\in\R^n.
\end{align*}
\item[$(\mathrm{ii})$] $\A$ is elliptic and canceling.
\end{enumerate}
\item\label{itm:b} If $k\geq n$, the following are equivalent:
\begin{enumerate}
\item[$(\mathrm{i})$] For all $u\in\bv^\A_{\locc}(\R^n)$, we have that
\begin{align*}
D^{k-n}u\in\taylor^{n,\infty}(x)\quad\text{ for }\mathscr{L}^n\text{--a.e. }x\in\R^n.
\end{align*}
\item[$(\mathrm{ii})$] $\A$ is elliptic and satisfies condition~\eqref{eq:mistery_cond2}.
\end{enumerate}
\end{enumerate}
In particular, if $k<n$ and $\A$ is elliptic and canceling, then every $u\in\bv^\A$ has a $k$--th order $\lebe^{n/(n-k)}$--Taylor expansion a.e.; if $k\geq n$ and $\A$ is elliptic and satisfies \eqref{eq:mistery_cond2}, then every $u\in\bv^\A$ is $k$ times differentiable a.e. (classically).
\end{theorem}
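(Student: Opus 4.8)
The plan is to treat the two items essentially in parallel, exploiting the fact that each splits into a sufficiency direction (a Sobolev-type estimate implies pointwise Taylor expansions a.e.) and a necessity direction (the pointwise property forces, first, ellipticity, then the relevant cancellation-type condition). For sufficiency in item (a), I would start from the scaling estimate \eqref{eq:VS_j}, namely $\|\D^{k-j}u\|_{\lebe^{n/(n-j)}}\lesssim\|\A u\|_{\lebe^1}$ for $u\in\hold^\infty_c$, which is available whenever $\A$ is elliptic and canceling and $1\le j\le\min\{k,n-1\}$. The argument should mirror the proof of Lemma~\ref{lem:suff_EC} (the $k=1$ case): given $u\in\bv^\A_{\locc}$, mollify to get $u_\varepsilon$, apply \eqref{eq:VS_j} on balls after subtracting the right polynomial (the $(k-j)$-th order Taylor polynomial of a good representative of $\D^{k-j}u$, or equivalently the $k$-th order polynomial of $u$), and use a Poincaré--type estimate together with the fact that $|\A u|$ is a finite measure with $|\A u|(\ball_r(x))=o(r^{n-1})$ for a.e.\ $x$ — this is where the critical exponent $n/(n-j)$ enters, because the elementary potential estimate only gives the weak-type bound, and the genuine $\lebe^{n/(n-j)}$ bound is exactly what \eqref{eq:VS_j} buys us over a naive Riesz-potential argument. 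For item (b), sufficiency is identical in structure but uses \eqref{eq:VS_k=n} in place of \eqref{eq:VS_j}, and here Theorem~\ref{thm:main_infty} is precisely the statement that, under ellipticity, \eqref{eq:VS_k=n} holds iff \eqref{eq:mistery_cond2} holds; so the hypothesis ``elliptic $+$ \eqref{eq:mistery_cond2}'' is exactly what makes the $\lebe^\infty$ estimate available, and the pointwise conclusion $\D^{k-n}u\in\taylor^{n,\infty}(x)$ a.e.\ follows by the same mollification-and-localization scheme, now with the $\lebe^\infty$ norm on balls.

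For the necessity directions, I would argue contrapositively and rely on the lemmas already announced in the introduction. If $\A$ is not elliptic, Lemma~\ref{lem:nec_ell_appdiff} (resp.\ Lemma~\ref{lem:nec_ell_k}) produces a map in $\bv^\A_{\locc}$ that fails the pointwise Taylor-expansion property — typically a suitably truncated element of the infinite-dimensional solution space of $\A u=0$, or a map built from a nonzero $v$ with $\A[\xi_0]v=0$ which carries an unbounded, non-smoothable direction. Assuming then that $\A$ is elliptic: for item (a) I must show that if $\A$ is \emph{not} canceling, the $\lebe^{n/(n-j)}$-Taylor property fails a.e.; the natural device is to invoke Lemma~\ref{lem:canc_char}, which characterizes non-cancellation by the existence of a fundamental-solution-type object whose $\A$-image is (a derivative of) a Dirac mass but which does not lie in the critical Lorentz/Lebesgue space locally — testing the putative pointwise property at the location of the Dirac then yields a contradiction. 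For item (b), replace ``canceling'' by ``\eqref{eq:mistery_cond2}'' and ``$\lebe^{n/(n-j)}$'' by ``$\lebe^\infty$'': if \eqref{eq:mistery_cond2} fails, Theorem~\ref{thm:main_infty} says \eqref{eq:VS_k=n} fails, and one upgrades the failure of the uniform estimate to a concrete $\bv^\A_{\locc}$ example — by a Baire-category / uniform-boundedness argument on a sequence of test functions with $\|\A u_m\|_{\lebe^1}$ bounded but $\|\D^{k-n}u_m\|_{\lebe^\infty}\to\infty$, glued at disjoint small scales around a point — whose $(k-n)$-th derivative is not even bounded near that point, hence not $n$-times classically differentiable there, and since the construction can be localized on a set of positive measure (indeed everywhere by translation), the a.e.\ statement fails.

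The final ``In particular'' clause is then a bookkeeping consequence: when $k<n$ one is in the range of item (a) with $j=k$, so the $k$-th order $\lebe^{n/(n-k)}$-expansion is the top case; when $k\ge n$, item (b) gives $\D^{k-n}u\in\taylor^{n,\infty}(x)$ a.e., and $\taylor^{n,\infty}(x)$ membership of $\D^{k-n}u$ at a.e.\ point, combined with the $\lebe^{n/(n-j)}$-expansions of the lower-order derivatives from item (a) for $j=1,\dots,n-1$, is exactly the assertion that $u$ is $k$ times differentiable in the classical (pointwise) sense a.e.; I would spell out this last equivalence between ``$\taylor^{n,\infty}$-expansion of all derivatives up to order $k-n$'' and ``classical $k$-th order differentiability'' as a short lemma, since $\lebe^\infty$-Taylor expansion of a function is genuine pointwise differentiability with a Peano remainder. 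The main obstacle I anticipate is not the sufficiency scheme — that is a robust adaptation of the $k=1$ argument — but rather making the necessity direction for \eqref{eq:mistery_cond2} genuinely rigorous: turning the mere \emph{failure} of the uniform inequality \eqref{eq:VS_k=n} into a single fixed $\bv^\A_{\locc}$ map whose bad pointwise behaviour occurs on a set of positive (indeed full) measure requires a careful scaling/superposition construction and a quantitative control of how the defect concentrates, and it is here that the precise form of the extremizers furnished by the proof of Theorem~\ref{thm:main_infty} will be indispensable.
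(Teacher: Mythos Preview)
Your overall plan matches the paper's approach closely: sufficiency via the Sobolev-type inequalities \eqref{eq:VS_j} and \eqref{eq:VS_k=n} applied to a cut-off of the $k$-th order remainder, necessity of ellipticity via plane waves, and necessity of the cancellation-type conditions via fundamental-solution objects combined with Baire category. Two points need correction, however.

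First, in the necessity direction for item~\ref{itm:a}, ``testing the putative pointwise property at the location of the Dirac'' does \emph{not} yield a contradiction: the map $u_h$ from Lemma~\ref{lem:canc_char} has $D^{k-j}u_h\notin\lebe^{n/(n-j)}$ only in neighbourhoods of the single point $0$, and statement (i) is an \emph{a.e.}\ statement, which one bad point cannot violate. You need precisely the Baire-category step you correctly outline for item~\ref{itm:b}: translate $u_h$ to a countable dense family of centres $\{x_l\}$, and show that the sets $\{u\in\bv^\A:\|D^{k-j}u\|_{\lebe^{n/(n-j)}(B(x_l,m^{-1}))}\le L\}$ are closed and nowhere dense (closedness requires that $\bv^\A$-convergence forces $D^{k-j}u_i\to D^{k-j}u$ a.e.\ along a subsequence, which follows from the Riesz-potential bound on $D^{k-j}K^\A$). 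The paper does exactly this in Lemma~\ref{lem:nec_canc_k}. For item~\ref{itm:b} the paper is also more direct than your gluing-of-test-functions scheme: it uses the explicit $u_h=K^\A w$, whose $(k-n)$-th derivative contains a $\log|\cdot|$ factor when \eqref{eq:mistery_cond2} fails (formula~\eqref{eq:conv_ker}), so $D^{k-n}u_h\notin\lebe^\infty_{\locc}$ near $0$, and then runs the same Baire argument.

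Second, two corrections in the sufficiency step. The relevant measure decay is $|\A u - A(\nabla^k u(x))|(\ball_r(x))=o(r^n)$, not $o(r^{n-1})$; this comes from Lebesgue differentiation together with the identification $\A^{ac}u(x)=A(\nabla^k u(x))$ of Lemma~\ref{lem:algebra_Au}, which you should invoke explicitly. And before you can control the lower-order cut-off terms $\sum_{l=1}^k r^{-l}\|D^{k-l}R^k_xu\|_{\lebe^1(\ball_{2r})}$ in the localized estimate, you need to know a priori that $D^{k-l}u\in\taylor^{l,1}(x)$ for \emph{every} $l=1,\ldots,k$; this is not automatic and requires Lemma~\ref{lem:ell=>sob} (giving $u\in\sobo^{k-1,1}_{\locc}$) followed by the Calder\'on--Zygmund bootstrapping Lemma~\ref{lem:CZ_lemma}.
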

The spaces $\taylor^{k,p}(x)$ and $\Taylor^{k,p}(x)$ of $k$--times differentiable functions in the $\lebe^p$--sense at $x$ were introduced by \textsc{Calder\'on} and \textsc{Zygmund}. We recall their definitions in Section~\ref{sec:fspaces}. We also extend the definition of $u\in\bv^\A$ if $u\in\lebe^1$ and $\A u\in\mathcal{M}$. 

As it stands, Theorem~\ref{thm:main_k_diff} is a generalization of \cite[Thm.~1(i)]{Dor} only. In the future, we intend to refine the results of Theorem~\ref{thm:main_k_diff} to lower dimensional exceptional sets, i.e., in the spirit of \cite[Thm.~1(ii),(iii)]{Dor}.

The proof of Theorem~\ref{thm:main_k_diff} is a natural extension of the ideas used to prove Theorem~\ref{thm:main_k=1}. To prove~\ref{itm:a}, we again use \cite[Thm.~1.3]{VS}, as formulated in \eqref{eq:VS_j}. To prove~\ref{itm:b}, we use Theorem~\ref{thm:main_infty}.

The necessity of ellipticity of $\A$ for the conclusions of Theorem~\ref{thm:main_k_diff} is somewhat surprising, particularly since it is not necessary for the estimate \eqref{eq:VS_j} used in the proof of \ref{itm:a}, for $u\in\hold^\infty_c$, \emph{unless} $j=1$ (see \cite[Sec.~5.1]{VS} for detail). This is to say that ellipticity is not necessary to give critical estimates for lower derivatives, but, as we will see below in Theorem~\ref{thm:sub_crit}, ellipticity is necessary to get sub--critical $\lebe^p$--differentiability in our setting. 

With the machinery used to prove Theorem~\ref{thm:main_k_diff} in place, it is easy to prove the sub--critical version, which is characteristic of elliptic operators. We state it here for comparison with the critical case above and for completeness of this work.
\begin{theorem}\label{thm:sub_crit}
Let $\A$ be as in \eqref{eq:Ak}. Then:
\begin{enumerate}
\item\label{itm:asub} If $1\leq j\leq\min\{k,n-1\}$, $1<p<n/(n-j)$, $\A$ is elliptic if and only if for all $u\in\bv^\A_{\locc}$ we have
\begin{align*}
\D^{k-j}u\in\taylor^{j,p}(x)\quad\text{ for }\mathscr{L}^n\text{--a.e. }x\in\R^n.
\end{align*}
\item\label{itm:bsub} If $j=n\leq k$, $1<p<\infty$, $\A$ is elliptic if and only if for all $u\in\bv^\A_{\locc}$ we have 
\begin{align*}
\D^{k-j}u\in\taylor^{j,p}(x)\quad\text{ for }\mathscr{L}^n\text{--a.e. }x\in\R^n.
\end{align*}
\item\label{itm:csub} If $n<k$, $\A$ is elliptic if and only if for all $u\in\bv^\A_{\locc}$ we have 
\begin{align*}
D^{k-n-1}u\in\taylor^{n+1,\infty}(x)\quad\text{ for }\mathscr{L}^n\text{--a.e. }x\in\R^n.
\end{align*}
\end{enumerate}
In particular, in the third case, $u$ is $k$ times classically differentiable a.e..
\end{theorem}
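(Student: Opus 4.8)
The plan is to deduce necessity of ellipticity from a plane-wave counterexample, and to prove sufficiency by re-running the proof of Theorem~\ref{thm:main_k_diff} with the scaling-critical inequalities \eqref{eq:VS_j} and Theorem~\ref{thm:main_infty} replaced by their sub-critical substitutes, which require only ellipticity (no cancellation).

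\textbf{Necessity.} If $\A$ is not elliptic, pick $\xi_0\in\mathbb{S}^{n-1}$ and $v_0\in V\setminus\{0\}$ with $\A[\xi_0]v_0=0$. Fix a bounded, compactly supported profile $h\colon\R\to\R$ that fails to belong to $\taylor^{j,p}(t)$ in cases \ref{itm:asub}, \ref{itm:bsub} (resp.\ to $\taylor^{n+1,\infty}(t)$ in case \ref{itm:csub}) for $\mathscr{L}^1$-a.e.\ $t$ in a set of positive length --- such profiles are produced by elementary one-dimensional examples of Weierstrass type --- and let $H$ be a $(k-j)$-fold (resp.\ $(k-n-1)$-fold) antiderivative of $h$. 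Then $u(x):=v_0\,H(x\cdot\xi_0)$ lies in $\lebe^1_{\locc}(\R^n,V)$ and $\A u=\A[\xi_0]v_0\,H^{(k)}(x\cdot\xi_0)=0$ in $\mathscr{D}^\prime$, so $u\in\bv^\A_{\locc}$; but $\D^{k-j}u=h(x\cdot\xi_0)\,(v_0\otimes^{k-j}\xi_0)$ (resp.\ $\D^{k-n-1}u=h(x\cdot\xi_0)\,(v_0\otimes^{k-n-1}\xi_0)$) is a plane wave whose profile fails the asserted Taylor expansion on a slab of positive $\mathscr{L}^n$-measure, by Fubini. This is exactly the construction of Lemmas~\ref{lem:nec_ell_appdiff} and \ref{lem:nec_ell_k}, to which we refer for the one-dimensional details.

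\textbf{Sufficiency.} Assume $\A$ elliptic. Ellipticity provides a convolution parametrix $N_\A$, with Fourier symbol a constant times $\A^\dagger[\xi]$ (positively homogeneous of degree $-k$ and smooth off $0$), such that $\A$ is hypoelliptic and, for any $u\in\bv^\A_{\locc}$ with $\mu:=\A u$ and any ball $B$, $u$ coincides on $B$ --- modulo a $\hold^\infty(B)$ term --- with $v:=N_\A*(\mu\mres 2B)$. Hence $\D^{k-j}v=G_j*(\mu\mres 2B)$, where $G_j:=\D^{k-j}N_\A$ has symbol a constant times $|\xi|^{-j}\Omega_j(\xi/|\xi|)$ with $\Omega_j$ smooth and $0$-homogeneous off $0$ (here one uses that $\A^\dagger[\,\cdot\,]$ is smooth on $\mathbb{S}^{n-1}$). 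Thus: (i) if $1\le j\le\min\{k,n-1\}$, factor $G_j=T_j\circ I_j$, with $I_j$ the Riesz potential of order $j$ and $T_j$ a Calder\'on--Zygmund operator, to get the elliptic sub-critical substitute for \eqref{eq:VS_j}, namely $\|\D^{k-j}\varphi\|_{\lebe^{n/(n-j),\infty}(\R^n)}\lesssim\|\A\varphi\|_{\lebe^1}$ for $\varphi\in\hold^\infty_c$; (ii) if $j=n\le k$, the symbol $c\,|\xi|^{-n}\Omega_n$ makes $G_n$ a logarithmic kernel (in $\lebe^p_{\locc}$ for every finite $p$), whence $\|\D^{k-n}\varphi\|_{\lebe^p(\ball_R)}\lesssim_R\|\A\varphi\|_{\lebe^1}$ for every finite $p$; (iii) if $n<k$, the symbol $c\,|\xi|^{-(n+1)}\Omega_{n+1}$, being homogeneous of degree $-(n+1)$ and smooth off $0$, has Fourier transform $c\,|x|\,\widetilde\Omega_{n+1}(x/|x|)$ with $\widetilde\Omega_{n+1}$ smooth, so $G_{n+1}$ is locally Lipschitz.

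\textbf{From the estimates to a.e.\ Taylor expansions.} With (i)--(iii) in hand, the a.e.\ expansions follow by the differentiation argument from the proof of Theorem~\ref{thm:main_k=1}. Split $\mu\mres 2B=f\,\mathscr{L}^n+\sigma$ with $f\in\lebe^1$ and $|\sigma|\perp\mathscr{L}^n$. At $\mathscr{L}^n$-a.e.\ $x$ --- a Lebesgue point of $f$ with $|\sigma|(\ball_r(x))/r^n\to0$ and finite density maximal function --- a near/far splitting of $G_j*(f\mathscr{L}^n)$ and $G_j*\sigma$ around $x$ (using the homogeneity of $G_j$ on the far part and (i)--(iii) on the near part, applied to the localized $\lebe^1$-datum $(f-f(x))\mathbf 1_{\ball_{2r}(x)}$) produces a polynomial $P_x$ of degree $\le j$ (resp.\ $\le n+1$) with
\begin{align*}
\Bigl(\dashint_{\ball_r(x)}|\D^{k-j}v-P_x|^p\Bigr)^{1/p}=o(r^{j})\qquad\text{resp.}\qquad\|\D^{k-n-1}v-P_x\|_{\lebe^\infty(\ball_r(x))}=o(r^{n+1}),
\end{align*}
for $1<p<n/(n-j)$ in case \ref{itm:asub} (the weak-$\lebe^{n/(n-j)}$ quasinorm dominates every strictly sub-critical $\lebe^p$-norm on balls), for $1<p<\infty$ in case \ref{itm:bsub} (the logarithmic kernel gives every finite $p$), and unconditionally in case \ref{itm:csub} from the Lipschitz bound. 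Re-adding the $\hold^\infty(B)$ correction yields $\D^{k-j}u\in\taylor^{j,p}(x)$, resp.\ $\D^{k-n-1}u\in\taylor^{n+1,\infty}(x)$, for $\mathscr{L}^n$-a.e.\ $x\in B$. In case \ref{itm:csub}, $\D^{k-n-1}u\in\taylor^{n+1,\infty}(x)$ a.e.\ means $\D^{k-n-1}u$ has Peano derivatives through order $n+1$ a.e., which by a standard argument (cf.\ \cite{CZ1}) upgrades to classical $k$-fold differentiability of $u$ a.e.

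\textbf{Main obstacle.} The crux is establishing (i)--(iii) for a \emph{merely elliptic} $\A$, and the endpoint $j=n+1$ is the most delicate: one must check that the Lipschitz kernel $G_{n+1}$ genuinely produces a full order-$(n+1)$ $\lebe^\infty$-Taylor expansion (rather than only first order), which is the ``Fundamental Theorem of Calculus'' phenomenon of the footnote --- the extra derivative relative to the borderline $j=n$ of Theorem~\ref{thm:main_infty} makes condition~\eqref{eq:mistery_cond2} superfluous. On the necessity side, the only non-formal ingredient is the one-dimensional profile $h$ realizing the failure of the precise $\taylor$-class, which in case \ref{itm:csub} must fail a uniform order-$(n+1)$ expansion.
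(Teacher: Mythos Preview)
Your necessity argument is the paper's: Lemma~\ref{lem:nec_ell_k} is precisely the plane-wave construction you sketch, so that direction is fine.

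For sufficiency you take a genuinely different route from the paper. You propose to work directly with the kernel $G_j=\D^{k-j}K^\A$, establish the homogeneity/Lipschitz properties (i)--(iii), and then run a near/far Lebesgue--differentiation argument \`a la \cite{ABC} at each scale $j$ separately. The paper instead bootstraps: by Lemma~\ref{lem:algebra_Au} (which already packages the \cite{ABC} argument) one has $\D^{k-1}u\in\taylor^{1,p}(x)$ a.e.\ for every $1\le p<n/(n-1)$; then one simply iterates \cite[Thm.~11.1]{CZ1}, which says that $f\in\sobo^{1,p}_{\locc}$ with $\D f\in\taylor^{l,p}(x)$ implies $f\in\taylor^{l+1,q}(x)$ for the critical $q$. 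This yields \ref{itm:asub} after $j-1$ steps and \ref{itm:bsub} after one more; \ref{itm:csub} follows from \ref{itm:bsub} by a single application of \cite[Thm.~11.2]{CZ1}. The entire sufficiency proof in the paper is three lines, because the Calder\'on--Zygmund machinery absorbs all the kernel analysis you are re-doing by hand.

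Your approach is viable for \ref{itm:asub} and \ref{itm:bsub}, but in \ref{itm:csub} you correctly flag --- and do not resolve --- the obstacle of extracting a \emph{full} order-$(n+1)$ $\lebe^\infty$-expansion from the $1$-homogeneous kernel $G_{n+1}$. A Lipschitz bound on $G_{n+1}$ alone gives only a first-order expansion of the convolution; to reach order $n+1$ one must Taylor-expand $G_{n+1}(\cdot-y)$ to order $n$ on the far annulus and control the remainder via $|\D^{n+1}G_{n+1}|\lesssim|\cdot|^{-n}$ and the maximal function, then combine with the $o(r^{n+1})$ near-part bound. This is exactly what \cite[Thm.~11.2]{CZ1} encodes, and invoking it (as the paper does) is both shorter and closes the gap you leave open.
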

The reader is invited to formulate the analogous  Lorentz variants of Theorems~\ref{thm:main_k_diff}, \ref{thm:sub_crit} in the spirit of Theorem~\ref{thm:main_k=1} or consult the Appendix (Section~\ref{sec:weak_est}).

We next connect our results to overdetermined elliptic linear systems with  $\lebe^1$--data. In \cite[Thm.~1]{CZ1}, \textsc{Calder\'on} and \textsc{Zygmund} showed that, for $1<p<\infty$, a $k$--th order linear elliptic system
\begin{align*}
\A u=f
\end{align*}
for which it is assumed that $u\in\sobo^{k,p}$, the pointwise regularity $f\in\taylor^{m,p}(x)$ can be improved to $\D^{k-j}u\in\taylor^{m+j,q}(x)$ for critical $q=np/(n-pj)$ for $1\leq j<n/p$. Their results rely on boundedness of singular integrals on $\lebe^p$ and cannot be extended to the limiting case $p=1$. In this case, by Ornstein's Non--inequality, it is not reasonable to assume that $u\in\sobo^{k,1}$, but we can extend the definition of $\taylor^{k,1}$ in a natural way, so that a Radon measure lies in $\taylor^{0,1}(x)$ for $\mathscr{L}^n$--a.e. $x$ by Lebesgue's Differentiation Theorem. If we then consider the system 
\begin{align*}
\A u=\mu,
\end{align*}
where $\mu$ is a Radon measure, Theorem~\ref{thm:sub_crit} implies that $\D^{k-j}u\in\taylor^{j,q}(x)$ for $\mathscr{L}^n$--a.e. $x$ for any sub--critical $q$ if and only if $\A$ is elliptic. In the range $1\leq k<n$, Theorem~\ref{thm:main_k_diff} implies that the critical exponent $q$ can be achieved if and only if, in addition, $\A$ is canceling. Finally, if $k\geq n$, condition~\eqref{eq:mistery_cond2} is equivalent with critical regularity. Of course, our results also imply that, if $\mu\in\bv^m_{\locc}$, we obtain $D^{k-j}u\in\taylor^{m+j,q}(x)$ for $\mathscr{L}^n$--a.e. $x$ with a similar discussion for the exponent $q$.
\newline\newline
\indent This paper is organized as follows: In Section~\ref{sec:prel_appdiff} we present general notation, the function spaces used, background facts about elliptic and EC operators, and the proof of sufficiency of ellipticity for Theorem~\ref{thm:sub_crit}. In Section~\ref{sec:proof_infty} we prove Theorem~\ref{thm:main_infty}, whereas its consequences for embeddings of function spaces are presented in Section~\ref{sec:WA1}. In Section~\ref{sec:char_mist_cond} we give an invariant, analytic characterization of condition~\eqref{eq:mistery_cond2} and examples to illustrate how it differs from cancellation. In 
Section~\ref{sec:k=1} we prove Theorem~\ref{thm:main_k=1}. In Section~\ref{sec:k} we prove Theorem~\ref{thm:main_k_diff}; this includes the necessity of ellipticity for Theorem~\ref{thm:sub_crit} (Lemma~\ref{lem:nec_ell_k}). In the Appendix (Section~\ref{sec:weak_est}), we present all differentiability results on the full Lorentz scale.
\section{Preliminaries}\label{sec:prel_appdiff}
Throughout this paper, we work on function spaces defined on $\R^n$, for $n\geq1$. 
We denote the ball centred at $x\in\R^n$ of radius $r>0$ by $\ball_r(x)\equiv\ball(x,r)$. We denote Lebesgue measure on $\R^n$ by $\mathscr{L}^n$ and the $s$--dimensional Hausdorff measure by $\mathscr{H}^s$. Averaged integrals, only taken with respect to Lebesgue measure ($\dif x$) throughout, are denoted by
\begin{align*}
\fint_{\Omega}\coloneqq\dfrac{1}{\mathscr{L}^n(\Omega)}\int_\Omega\eqqcolon(\,\,\bigcdot\,\,)_{\Omega}.
\end{align*}
The space of linear maps between two vector spaces $V,W$ is denoted by $\lin(V,W)$. The space $\mathcal{M}(\R^n,W)$ denotes the spaces of $W$--valued, bounded measures on $\R^n$ (i.e., of finite total variation), naturally equipped with the total variation norm $|\cdot|$. Inherently, $\mathcal{M}_{\locc}$ denotes the space of Radon measures. We denote by $\hold^\infty_c$ the space of continuously supported maps, by $\mathscr{S}$ the space of Schwartz functions, by $\lebe^p$ ($\lebe^p_{\locc}$) the space of (locally) $p$--integrable functions. We denote by $\mathscr{D}^\prime$ the space of distributions, the topological dual of $\hold^\infty_c$, and by $\mathscr{S}^\prime$ the space of tempered distributions, dual to the Schwartz space $\mathscr{S}$. Sobolev spaces are denoted by $\sobo^{k,p}$. The Fourier transform is defined by
\begin{align*}
\hat{u}(\xi)=\int_{\R^n}\e^{-\imag \xi\cdot x}u(x)\dif x\qquad\text{ for }\xi\in\R^n
\end{align*}
for $u\in\mathscr{S}$ and extended for tempered distributions by duality.

We write $X(\Omega,V)$ for space of (generalized) functions from some open $\Omega\subset\R^n$ into a vector space $V$. We abbreviate $X(\Omega)\coloneqq X(\Omega,\R)$, but, as a note of caution, we may suppress $V$ altogether, when no confusion arises (also for spaces of vector--valued functions). This is mostly done for displayed estimates.

We also make the rather strange but very handy convention to ignore multiplicative non--zero (possibly complex) constant scalars. Such constants play no role in neither the inequalities, nor in the algebraic conditions we discuss. This can be formalized by redefining equality as a suitable equivalence relation. For example, under this relation the Fourier transform of $\partial_j u$ at $\xi$ ``equals'' $\xi_j \hat{u}(\xi)$.
\subsection{Function spaces}\label{sec:fspaces}
In the spirit of \cite[Sec.~2.1]{GR}, we define for $\A$ as in \eqref{eq:Ak},
\begin{align*}
\bv^\A_{\locc}\coloneqq \bv^\A_{\locc}(\R^n)=\{u\in\lebe^1_{\locc}(\R^n,V)\colon\A u\in\mathcal{M}_{\locc}(\R^n,W)\},
\end{align*}
where $\mathcal{M}$ denotes the space of bounded measures. 
In general, we suppress the domain of maps defined locally or in full space. For each $u\in\bv^\A_{\locc}$, we denote the Radon--Nikod\'ym decomposition of $\A u\in\mathcal{M}_{\locc}$ by
\begin{align*}
\A u=\A^{ac}u\mathscr{L}^n+\A^su=\dfrac{\dif \A u}{\dif\mathscr{L}^n}\mathscr{L}^n+\A^su,
\end{align*}
where $\A^su\perp\mathscr{L}^n$.

We recall the spaces of $k$--times $\lebe^p$--differentiable functions $\Taylor^{k,p}(x)$, $\taylor^{k,p}(x)$, as introduced for $x\in\R^n$, $1\leq p\leq\infty$ in \cite[Def.~1--2]{CZ1} as the spaces of maps $u$ defined in a neighbourhood of $x$ for which there exists a polynomial $P\eqqcolon P^k_xu$ of degree at most $k$ (a \emph{$k$--th order $\lebe^p$--Taylor expansion}) such that
\begin{align}\label{eq:def_taylor_p}
\left(\fint_{\ball_r(x)}|u-P|^p\dif y\right)^{1/p}=O(r^k)\text{, respectively }o(r^k),\quad\text{ as }r\downarrow0.
\end{align}
If $p=\infty$, we replace the LHS by $\|u-P\|_{\lebe^\infty(\ball_r(x))}$. We denote the \emph{remainder} by $R^k_x u\coloneqq u-P^k_xu$. It is easy to see that $\Taylor^{k,q}(x)\subset\Taylor^{k,p}(x)$ for $1\leq p\leq q\leq\infty$; the same holds true for $\taylor^{k,p}$. If $u\in\taylor^{k,\infty}(x)$, then $u$ has $k$ classical derivatives at $x$. For more detail on these spaces, see \textsc{Ziemer}'s monograph \cite[Ch.~3]{Z}.

The Lorentz space $\lebe^{p,q}$ consists, for $1\leq p<\infty$, $1\leq q\leq\infty$, of measurable functions $u$ such that the quasi--norm
\begin{align*}
&\|u\|_{\lebe^{p,q}}\coloneqq p^{1/q}\left(\int_0^\infty \left[\lambda\mathscr{L}^n\left(\{|u|>\lambda\}\right)^{1/p}\right]^q\frac{\dif \lambda}{\lambda}\right)^{1/q}\quad\text{ if }q<\infty\\
&\|u\|_{\lebe^{p,\infty}}\coloneqq \sup_{\lambda>0}\lambda\mathscr{L}^n\left(\{|u|>\lambda\}\right)^{1/p}
\end{align*}
is finite. Lorentz spaces are a refinement of the Lebesgue scale, as can be seen, for example, since $\lebe^{p,p}=\lebe^p$ and $\lebe^{p,\infty}=\lebe^p_{\text{weak}}$. More precisely, $\lebe^{p,q_1}\subset\lebe^{p,q_2}$ if $q_1\leq q_2$, and $\lebe^{p_1,\infty}_{\locc}\subset\lebe^{p_2,1}_{\locc}$ if $p_2<p_1$. 
\subsection{Elliptic operators}
We begin with an important Green--type formula:
\begin{lemma}[{\cite[Lem.~2.1]{BVS}}]\label{lem:conv}
Let $\A$ as in \eqref{eq:Ak} be elliptic. Then there exists a map $K^\A\in\hold^\infty(\R^n\setminus\{0\},\lin(W,V))\cap\lebe^1_{\locc}(\R^n,\lin(W,V))$ such that
\begin{align}\label{eq:conv_rep}
u(x)=\int_{\R^n}K^\A(x-y)\A u(y)\dif y=(K^\A\star\A u)(x)
\end{align}
for all $u\in\hold^\infty_c(\R^n,V)$ and $x\in\R^n$. Moreover, for all integers $l\geq\max\{0,k-n+1\}$, the map $\D^lK^\A$ is $(k-n-l)$--homogeneous.
\end{lemma}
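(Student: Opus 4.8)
The plan is to realize $K^\A$ as the convolution kernel of a Fourier multiplier inverting $\A$. Since $\A$ is elliptic, the symbol $\A[\xi]\in\lin(V,W)$ is injective for $\xi\neq0$, so the endomorphism $\A^*[\xi]\A[\xi]$ of $V$ is positive definite, and $\A^\dagger[\xi]=(\A^*[\xi]\A[\xi])^{-1}\A^*[\xi]$ is well defined with $\A^\dagger[\xi]\A[\xi]=\id_V$. By Cramer's rule each entry of $\A^\dagger[\xi]$ is a ratio of homogeneous polynomials in $\xi$ whose denominator $\det(\A^*[\xi]\A[\xi])$ does not vanish on $\R^n\setminus\{0\}$; hence $\A^\dagger\in\hold^\infty(\R^n\setminus\{0\},\lin(W,V))$, it is homogeneous of degree $-k$, and every entry has the form $\Omega(\xi/|\xi|)\,|\xi|^{-k}$ with $\Omega\in\hold^\infty(\mathbb{S}^{n-1})$. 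I would then \emph{define} $K^\A$ to be the inverse Fourier transform of $\A^\dagger$: for $k<n$ the multiplier is locally integrable, and for $k\geq n$ one extends it to a tempered distribution by its Hadamard finite part (equivalently, by meromorphic continuation of $|\xi|^{z}\Omega$ in $z$). By the classical structure theory for Fourier transforms of homogeneous distributions — decomposition of $\Omega$ into spherical harmonics together with Bochner's relation, see H\"ormander or the references in \cite{BVS} — $K^\A$ agrees, away from the origin, with a $(k-n)$-homogeneous function in $\hold^\infty(\R^n\setminus\{0\},\lin(W,V))$, plus a term $P(x)\log|x|$ with $P$ a $\lin(W,V)$-valued homogeneous polynomial of degree $k-n$ (such a term can occur only when $k-n$ is a nonnegative integer, and then only for special $\A$). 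Since homogeneous functions of degree $>-n$ and maps of the form $|x|^{m}\log|x|$ with $m\geq 0$ are locally integrable, this yields $K^\A\in\hold^\infty(\R^n\setminus\{0\},\lin(W,V))\cap\lebe^1_{\locc}(\R^n,\lin(W,V))$.

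Next I would read off the homogeneity of the derivatives. Differentiating the $(k-n)$-homogeneous part $l$ times leaves it $(k-n-l)$-homogeneous and smooth away from the origin, for every $l\geq0$. For the logarithmic term $P(x)\log|x|$ with $P$ homogeneous of degree $k-n$, Leibniz' rule gives $\D^l[P(x)\log|x|]=\sum_{\alpha+\beta=l}c_{\alpha\beta}\,(\D^\alpha P)(x)\,(\D^\beta\log|x|)$, and a summand vanishes unless $|\alpha|\leq k-n$; thus as soon as $l\geq k-n+1$ every surviving summand has $|\beta|\geq1$, so $\D^\beta\log|x|$ is homogeneous of degree $-|\beta|$ and smooth away from $0$, no logarithm remains, and the homogeneity degrees add up to $(k-n-|\alpha|)+(-|\beta|)=k-n-l$. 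Hence $\D^lK^\A$ is $(k-n-l)$-homogeneous for every $l\geq\max\{0,k-n+1\}$, as claimed; when $k<n$ there is no logarithmic part, the threshold is $0$, and $K^\A$ itself is homogeneous.

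For the representation formula, fix $u\in\hold^\infty_c(\R^n,V)$. Using the convention that suppresses the factor $\imag^k$, the function $\widehat{\A u}(\xi)=\A[\xi]\hat u(\xi)$ is Schwartz and satisfies $|\widehat{\A u}(\xi)|\lesssim|\xi|^k$ near $\xi=0$, since $\A[\bigcdot]$ is a homogeneous polynomial of degree $k$. Therefore the product $\A^\dagger[\xi]\,\widehat{\A u}(\xi)$ is absolutely integrable — in fact continuous — near the origin, so it is unambiguously equal to $\A^\dagger[\xi]\A[\xi]\hat u(\xi)=\hat u(\xi)$, independently of which finite-part extension of $\A^\dagger$ was selected. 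Consequently $\widehat{K^\A\star\A u}=\widehat{K^\A}\,\widehat{\A u}=\hat u$, and Fourier inversion gives $u=K^\A\star\A u$ pointwise, both sides being continuous; writing the convolution out produces the stated identity.

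The main obstacle is the case $k\geq n$ in the construction of $K^\A$: one cannot simply invert a locally integrable multiplier and must control the finite-part regularization, the logarithmic terms that genuinely occur (already for $\A=\D^n$ with $n$ even), and the verification that the regularization is harmless in the representation identity. This is classical but delicate. An alternative that packages the analytic input as a black box is to take a fundamental solution $E$ of the strongly elliptic order-$2k$ operator $\A^*\A$ — whose symbol $\A^*[\xi]\A[\xi]$ is positive definite, so its existence and homogeneity-modulo-logarithm structure are standard — and set $K^\A(z)\coloneqq\sum_{|\gamma|=k}(\D^\gamma E)(z)\,c_\gamma$, where $\A^*[\xi]=\sum_{|\gamma|=k}c_\gamma\,\xi^\gamma$ with $c_\gamma\in\lin(W,V)$; the identity $u=E\star\A^*\A u$ for $u\in\hold^\infty_c(\R^n,V)$ then gives $u=K^\A\star\A u$ after integration by parts, and the regularity and homogeneity statements for $K^\A$ transfer from those of $E$.
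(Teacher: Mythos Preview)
Your proposal is correct and follows the same construction the paper cites from \cite[Lem.~2.1]{BVS}: realize $K^\A$ as the inverse Fourier transform of the H\"ormander extension $(\A^\dagger[\cdot])^{\bigcdot}$ of the $(-k)$--homogeneous multiplier and read off smoothness, local integrability, and the homogeneity of $\D^lK^\A$ from the structure theory for Fourier transforms of homogeneous distributions (\cite[Thms.~3.2.3--4, 7.1.16--20]{HGOD1}), exactly as the paper invokes in the proofs of Lemma~\ref{lem:canc_char} and Formula~\eqref{eq:conv_ker}. Your alternative route via the fundamental solution of $\A^*\A$ is likewise what the paper employs in the proof of Lemma~\ref{lem:ell=>sob}.
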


We next show that under the mild assumption of ellipticity, maps in $\bv^\A_{\locc}$ have appropriate local Sobolev regularity for their lower derivatives. The proof is reminiscent of the {Deny--Lions} Lemma, which covers the case $\A=\D$ \cite{DL} (see also \cite[Thm.~2.1]{ST} for the case $\A=\mathcal{E}$). Our proof is based on \cite[Thm.~4.5.8]{HGOD1}.
\begin{lemma}\label{lem:ell=>sob}
Let $\A$ as in \eqref{eq:Ak} be elliptic and $u\in\mathscr{D}^\prime(\R^n,V)$ be such that $\A u\in\mathcal{M}_{\locc}(\R^n,W)$. Then $u\in\sobo^{k-1,p}_{\locc}$ for $1\leq p<n/(n-1)$.
\end{lemma}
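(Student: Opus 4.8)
The plan is to combine the convolution representation from Lemma~\ref{lem:conv} with a mollification argument. First I would mollify: set $u_\ep\coloneqq u\star\rho_\ep$ for a standard mollifier $\rho_\ep$, so that each $u_\ep$ is smooth and $\A u_\ep=(\A u)\star\rho_\ep$ is smooth. The point of mollifying is that $u_\ep$ need not have compact support, so the representation \eqref{eq:conv_rep} does not apply directly; I would therefore work \emph{locally}, fixing a ball $\ball_R(0)$ and a cutoff $\chi\in\ccinfty$ with $\chi\equiv1$ on $\ball_{2R}(0)$, and apply \eqref{eq:conv_rep} to $\chi u_\ep$. This gives, for $x\in\ball_R(0)$,
\begin{align*}
u_\ep(x)=\bigl(K^\A\star\A(\chi u_\ep)\bigr)(x),
\end{align*}
and the commutator $\A(\chi u_\ep)-\chi\,\A u_\ep$ is a differential operator of order at most $k-1$ applied to $u_\ep$ with coefficients supported in $\spt\chi$.

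The second step is to differentiate $k-1$ times and estimate. By Lemma~\ref{lem:conv}, for $l\geq\max\{0,k-n+1\}$ the kernel $\D^l K^\A$ is $(k-n-l)$--homogeneous and smooth away from the origin; in particular, taking $l=k-1$, the kernel $\D^{k-1}K^\A$ is $(-n+1)$--homogeneous (when $k-1\geq k-n+1$, i.e. $n\geq2$; for $n=1$, $k-1=k-n$ and the statement is about $\D^{k-1}K^\A$ being $0$--homogeneous, hence bounded, which only makes the estimate easier). A kernel that is $(-n+1)$--homogeneous and smooth on $\mathbb{S}^{n-1}$ defines a convolution operator of Riesz-potential type of order $1$, which maps $\mathcal{M}$ (finite measures) boundedly into $\lebe^p_{\locc}$ for every $1\leq p<n/(n-1)$ — this is the classical weak-type bound for $I_1$ together with the fact that $I_1\colon\mathcal{M}\to\lebe^{n/(n-1),\infty}$ and $\lebe^{n/(n-1),\infty}_{\locc}\subset\lebe^p_{\locc}$ for $p<n/(n-1)$, which is exactly the Lorentz inclusion recorded just before Section~2.1. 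Applying this to $\D^{k-1}u_\ep=\D^{k-1}K^\A\star\A(\chi u_\ep)$ on $\ball_R(0)$ and using that $\|\A(\chi u_\ep)\|_{\mathcal{M}}$ is bounded uniformly in $\ep$ (by $\|\A u\|_{\mathcal{M}(\ball_{2R})}$ plus lower-order terms controlled once we know $u_\ep\to u$ in $\lebe^{k-1,p}_{\locc}$ — so this must be set up as an induction on the order of derivative, or bootstrapped), I obtain a uniform-in-$\ep$ bound on $\|\D^{k-1}u_\ep\|_{\lebe^p(\ball_R)}$, and similarly on lower derivatives and on $u_\ep$ itself in $\lebe^1_{\locc}$.

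The third step is passage to the limit: the uniform $\sobo^{k-1,p}(\ball_R)$ bounds give, along a subsequence, weak convergence $u_\ep\rightharpoonup v$ in $\sobo^{k-1,p}(\ball_R)$; since also $u_\ep\to u$ in $\mathscr{D}'$, we identify $v=u$, so $u\in\sobo^{k-1,p}_{\locc}$. Alternatively, one avoids subsequences by noting the estimate already bounds $\|u_\ep\|_{\sobo^{k-1,p}(\ball_R)}$ and $u_\ep\to u$ in $\lebe^1_{\locc}$, and lower semicontinuity of the norm finishes it. Since $R$ was arbitrary and $1\leq p<n/(n-1)$ arbitrary, the conclusion follows.

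The main obstacle is the bookkeeping of the commutator $[\A,\chi]u_\ep$, a genuine differential operator of order $k-1$: a priori we do not yet know $u\in\sobo^{k-1,p}_{\locc}$, so this term is not obviously controlled. The clean way around this is an induction on $k$ (or on the derivative order): one first proves $u\in\lebe^p_{\locc}$ and $\D u,\ldots,\D^{k-2}u\in\lebe^p_{\locc}$ from the lower-order, \emph{more regular} pieces of the kernel (the derivatives $\D^l K^\A$ with $l<k-1$ are $(k-n-l)$-homogeneous with $k-n-l>-n+1$, hence even better behaved, giving Riesz potentials of order $>1$ and thus mapping into better spaces — indeed into $\lebe^q_{\locc}$ for a range of $q$), and only then handles $\D^{k-1}u$, by which point every term in $[\A,\chi]u_\ep$ involves at most $\D^{k-1}u_\ep$, and the top-order contribution can be absorbed or treated by a standard iteration on shrinking balls. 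This is routine once organized correctly but is the one place care is needed; everything else is a direct application of Lemma~\ref{lem:conv} and the elementary mapping properties of homogeneous kernels of order $1$ on finite measures.
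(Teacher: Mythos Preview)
Your overall architecture---localize with a cutoff, use the convolution representation from Lemma~\ref{lem:conv}, and control the main term by the weak-type bound for the $(1-n)$--homogeneous kernel---is exactly that of the paper. The difficulty you single out, the commutator $[\A,\chi]u_\ep$, is also the right one. But your proposed resolution by ``induction on the derivative order'' followed by ``absorption or iteration on shrinking balls'' does not close. The commutator $[\A,\chi]$ is a differential operator of order $k-1$ regardless of which derivative $D^l u_\ep$ you are currently trying to estimate, so even the base case $l=0$ already requires control of $D^{k-1}u_\ep$; there is no genuine induction. And once you assume the lower derivatives are handled, the remaining top-order term gives an estimate of the shape
\[
\|D^{k-1}u_\ep\|_{\lebe^p(\ball_R)}\leq C_{\mathrm{data}}+C\,\|D^{k-1}u_\ep\|_{\lebe^p(\ball_{3R})}
\]
with a dimensional constant $C$ that is \emph{not} small, so neither absorption nor a hole-filling iteration yields a uniform-in-$\ep$ bound. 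Since $u$ is only assumed to be a distribution, you also have no starting integrability to feed into such an iteration.

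The paper avoids this entirely by dropping the mollification and working directly at the level of distributions. The commutator terms $B_j(D^j\rho,D^{k-j}u)$ are \emph{supported} in $\spt D\rho\subset\R^n\setminus Q$, where $Q$ is the set on which $\rho\equiv1$. Since $\singspt\big(D^{k-1}(\A E^*)^*\big)\subset\{0\}$, the inclusion $\singspt(f\star g)\subset\singspt f+\singspt g$ (H\"ormander, \cite[Thm.~4.2.5]{HGOD1}) gives that each $\mathbf{II}_j$ is $\hold^\infty$ on $Q$. No a priori integrability of $u$ or its derivatives is needed, and there is nothing to absorb. This singular-support observation is the missing idea in your sketch; once you make it, the mollification becomes superfluous and the proof is immediate. (A minor further point: the paper uses the genuine fundamental solution $E=K^{\A^*\A}$ of the determined operator $\A^*\A$ rather than $K^\A$, which makes the representation $\rho u=E\star\A^*\A(\rho u)$ valid for compactly supported distributions without appeal to density.)
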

\begin{proof}
It is easy to see that $\A^*\A$ is elliptic. We write $E\coloneqq K^{\A^*\A}\in\hold^\infty(\R^n\setminus\{0\},\lin(V,V))$, as given by Lemma \ref{lem:conv}. 
Choose $\rho\in\hold^\infty_c(\R^n)$ such that $\rho=1$ in an open cube $Q\subset\R^n$. It is then the case that
\begin{align*}
\rho u=E\star\A^*\A (\rho u)=(\A E^*)^*\star \A(\rho u),
\end{align*}
where the latter inequality follows by integration by parts. It follows that
\begin{align*}
\D^{k-1}(\rho u)&=(\A E^*)^*\star \A(\rho u)\\
&=D^{k-1}(\A E^*)^*\star (\rho\A u)+\sum_{j=1}^k D^{k-1}(\A E^*)^*\star B_j(D^{j}\rho,D^{k-j}u)\\
&\eqqcolon\mathbf{I}+\sum_{j=1}^k\mathbf{II}_j
\end{align*}
in the sense of distributions. Here $B_j$ are bilinear pairings that depend on $\A$ only, as given by the Leibniz rule. If $n>1$, by Lemma \ref{lem:conv}, we have that $D^{2k-1}E^*$ is $(1-n)$--homogeneous, hence so is $D^{k-1}(\A E^*)^*$ by $k$--homogeneity of $\A$. In particular, since $E$ is smooth it follows that $|D^{k-1}(\A E^*)^*|\lesssim |\cdot|^{1-n}$, so $\mathbf{I}\in\lebe^p_{\locc}$ for $1\leq p<n/(n-1)$ by boundedness of Riesz potentials \cite[Thm.~V.1]{Stein} and inclusions of Lorentz spaces. If $n=1$, one can infer from \eqref{eq:conv_ker} that $|D^{k-1}(\A E^*)^*|\lesssim 1+\log|\cdot|$, so it is still the case the case that $D^{k-1}(\A E^*)^*\in\lebe^p_{\locc}$ for any $1\leq p<\infty$, hence $\mathbf{I}\in\lebe^p_{\locc}$ by Young's convolution inequality. To conclude, we claim that $\mathbf{II}_j$ are smooth in $Q$. This follows from an application of \cite[Thm.~4.2.5]{HGOD1}, which implies that
\begin{align*}
\singspt (f\star g)&\subset \singspt f+\singspt g\\
&\subset \{0\}+\R^n\setminus Q=\R^n\setminus Q,
\end{align*}
where we wrote $f=D^{k-1}(\A E^*)^*$, $g=B_j(D^j\rho,\D^{k-j}u)$ (the singular support $\singspt f$ of a distribution $f$ is defined in \cite[Def.~2.2.3]{HGOD1} as the set of points $x$ such that no restriction of $f$ to a neighbourhood of $x$ equals a smooth map). It follows that $D^{k-1}u\in\lebe^p_{\locc}(Q)$ for $1\leq p<n/(n-1)$, so $\D^{k-1}u\in\lebe^{p}_{\locc}(\R^n)$ as $Q$ is arbitrary. 

The conclusion follows either by iterating the above argument for $\A=D^{k-j}$, $j=1,\ldots, k$, or by standard theory for Sobolev Spaces \cite[Sec.~1.1.11]{Mazya}.
\end{proof}
The following Lemma is essentially due to \textsc{Calder\'on} and \textsc{Zygmund} and covers the non--trivial statement that if a higher derivative of a Sobolev map $u$ has $\lebe^p$--derivatives at $x$, then the lower derivatives of $u$ have more $\lebe^p$--derivatives at $x$. More precisely, we show exchangeability of weak and $\lebe^p$--derivatives
\begin{align}\label{eq:commutativity}
\nabla(D^{k-1} u)=\nabla^j(D^{k-j} u)\qquad\text{ $\mathscr{L}^n$--a.e.}
\end{align}
for $j=2,\ldots,k$, which is trivial if $u\in\sobo^{k,p}_{\locc}$, but less so if we only assume that $u\in\sobo^{k-1,p}_{\locc}$ and that the LHS of \eqref{eq:commutativity} is well defined. Here $\nabla^j u(x)=D^jP^j_xu(x)$.
\begin{lemma}\label{lem:CZ_lemma}
Let $1\leq p<n$, $u\in\sobo^{k-1,p}_{\locc}$, $x\in\R^n$ be such that $\D^{k-1}u\in\taylor^{1,p}(x)$. Then $\D^{k-j}u\in\taylor^{j,p}(x)$ for $j=1\ldots k$.
\end{lemma}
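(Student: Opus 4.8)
The plan is to reduce to the flat case $\A=D^k$ and then run the induction of \textsc{Calder\'on}--\textsc{Zygmund}. Since the statement concerns only the Sobolev regularity $u\in\sobo^{k-1,p}_{\locc}$ and the pointwise $\lebe^p$--Taylor behaviour of $D^{k-1}u$, the operator $\A$ plays no role beyond having guaranteed (via Lemma~\ref{lem:ell=>sob}) that $u$ lies in $\sobo^{k-1,p}_{\locc}$ in the first place; the lemma as phrased is really a statement about a single Sobolev map. So I would fix $x$, assume without loss that $x=0$, and let $P=P^1_0(D^{k-1}u)$ be the first-order $\lebe^p$--Taylor polynomial of $D^{k-1}u$ at $0$, so that $\fint_{\ball_r}|D^{k-1}u-P|^p=O(r^p)$. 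The constant term of $P$ is the approximate value $D^{k-1}u(0)$ (which agrees with the Lebesgue value of the $\sobo^{k-1,p}$ representative), and its linear part is $\nabla^1(D^{k-1}u)(0)$.

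The heart of the matter is to \emph{integrate} this information to produce Taylor polynomials for the lower derivatives $D^{k-j}u$, $j\ge2$. First I would treat $j=2$. The idea is that a candidate polynomial $P^2_0(D^{k-2}u)$ of degree $\le2$ is obtained by formally antidifferentiating $P$ once: one integrates the affine function $P$ along rays from the origin, obtaining a quadratic $Q$ with $DQ$ ``equal'' to $P$ in the appropriate symmetrised sense. (Here one must use that $\nabla^1(D^{k-1}u)(0)$, being a limit of actual gradients $D(D^{k-1}u)=D^2(D^{k-2}u)$ in the averaged $\lebe^p$--sense, is symmetric in the two fresh indices — this is where the hypothesis $p<n$ and the exchangeability claim \eqref{eq:commutativity} enter, guaranteeing the antiderivative is consistent.) Then one estimates $\fint_{\ball_r}|D^{k-2}u-Q|^p$ by writing $D^{k-2}u-Q=\int_0^1 \tfrac{d}{dt}\big((D^{k-2}u)(ty)-Q(ty)\big)\,dt$ along radii, bounding pointwise by a radial average of $|D^{k-1}u-P|$, and applying a Hardy-type (one-dimensional Hardy--Littlewood) inequality on rays together with Fubini to convert the $O(r^p)$ control of the derivative's remainder into $O(r^{2p})$ control of the remainder of $D^{k-2}u$. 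This is exactly the mechanism in \cite[Thm.~12]{CZ1} / \cite[Ch.~3]{Z}. Iterating, each antidifferentiation step raises the order of the remainder by one and lowers the derivative order by one, giving $D^{k-j}u\in\taylor^{j,p}(0)$ for all $j=1,\dots,k$.

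The main obstacle, and the only genuinely non-routine point, is verifying that the iterated antiderivatives are well defined — i.e., that at each stage the linear (and higher) coefficients of the Taylor polynomial of $D^{k-1}u$ are \emph{symmetric} in the newly created derivative slots, so that a polynomial $P^j_0(D^{k-j}u)$ with the prescribed derivatives actually exists. For a genuine $\sobo^{k,p}$ map this is automatic from equality of mixed weak partials, but under the weaker hypothesis $u\in\sobo^{k-1,p}_{\locc}$ it is precisely the content of the exchangeability identity \eqref{eq:commutativity}, which must be extracted from the $\lebe^p$--differentiability of $D^{k-1}u$ at $x$ via a difference-quotient argument (comparing $\tau_h D^{k-1}u$ with $D^{k-1}u$ in $\lebe^p_{\locc}$ and using the restriction $p<n$ to control error terms). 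Once that symmetry is in hand, the radial-integration estimates are standard and I would not belabour them. A clean way to organise the whole argument is to localise by multiplying $u$ by a cutoff equal to $1$ near $x$, reducing to compactly supported data, and then invoke the classical \textsc{Calder\'on}--\textsc{Zygmund} result \cite[Thm.~12]{CZ1} (or \cite[Thm.~3.4.2]{Z}) verbatim, citing \eqref{eq:commutativity} as the hypothesis it needs; the only thing left to prove is \eqref{eq:commutativity} itself.
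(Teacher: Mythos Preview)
Your plan and the paper's proof both rest on the same Calder\'on--Zygmund machinery, and in that sense your approach is correct. The paper, however, is far more economical: it simply quotes \cite[Thm.~11.1]{CZ1} --- if $f\in\sobo^{1,p}_{\locc}$ and $Df\in\taylor^{l,p}(x)$ then $f\in\taylor^{l+1,p}(x)$ --- and iterates with $f=D^{k-j-1}u$, $l=j$, for $j=1,\dots,k-1$. That is the entire argument.

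Where you diverge is in your treatment of \eqref{eq:commutativity}. You cast it as an obstacle that must be established \emph{before} the Calder\'on--Zygmund result can be applied, and propose a separate difference-quotient argument to produce the needed symmetry of the Taylor coefficients. This inverts the logic of the paper: \eqref{eq:commutativity} is a \emph{consequence} of the lemma (via the cited theorem), not an input to it. The statement of \cite[Thm.~11.1]{CZ1} requires only $f\in\sobo^{1,p}_{\locc}$ and $Df\in\taylor^{l,p}(x)$; whatever compatibility is needed to integrate the Taylor polynomial of $Df$ is handled inside that result, precisely because $Df$ is already a weak gradient. So once you cite the theorem there is nothing left to prove, and in particular the restriction $p<n$ plays no role in this step. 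Your radial-integration/Hardy sketch is a legitimate way to \emph{re-derive} the Calder\'on--Zygmund theorem from scratch, but it is extra work the paper simply does not do.
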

Note that due to Ornstein's Non--inequality, it is crucial that we do not assume that $u\in\sobo^{k,p}_{\locc}$, as we aim to apply this to $u\in\bv^\A_{\locc}$ (cp. Lemma~\ref{lem:ell=>sob}).
\begin{proof}
We argue by induction on $j$, using the fact following from \cite[Thm.~11.1]{CZ1}, namely that if $f\in\sobo^{1,p}_{\locc}$ such that $\D f\in\taylor^{l,p}(x)$, then $f\in\taylor^{l+1,p}$. The statement is true for $j=1$. We apply the result mentioned above for $f=\D^{k-j-1}u$ with $l=j=1,\ldots, k-1$ to get the conclusion.
\end{proof}
The next Lemma is a consequence of the main result in \cite{ABC} and follows from the first order case \cite[Lem.~3.1]{GR2} by replacing $u$ with $\D^{k-1}u$. As the result is crucial for Theorems~\ref{thm:main_k=1}, \ref{thm:main_k_diff}, \ref{thm:sub_crit}, we include a complete proof.
\begin{lemma}\label{lem:algebra_Au} Let $\A$ as in \eqref{eq:Ak} be elliptic, $u\in\bv^\A_{\locc}$. Then for $\mathscr{L}^n$--a.e. $x\in\R^n$, we have that $\D^{k-1}u$ is $\lebe^p$--differentiable at $x$, with (first) approximate gradient $\nabla^k u(x)\coloneqq \nabla(D^{k-1}u)(x)\in V\odot^k\R^n$ such that
\begin{align}\label{eq:algebra_Au}
\A^{ac}u(x)\coloneqq \dfrac{\dif\A u}{\dif\mathscr{L}^n}(x)=A(\nabla^ku(x)),
\end{align}
where $1\leq p<n/(n-1)$.
\end{lemma}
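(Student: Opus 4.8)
The plan is to reduce the statement to the first-order result \cite[Lem.~3.1]{GR2} (equivalently, the $k=1$ case of the present lemma together with the identity $\A^{ac}u(x)=A(\nabla^1 u(x))$ for $u\in\bv^\A_{\locc}$), applied not to $u$ itself but to its $(k-1)$-st derivative. The first point to establish is that the distribution $v\coloneqq\D^{k-1}u$ makes sense as an $\lebe^1_{\locc}$ map and that $\A u$ is, up to the constant conventions in force, a first-order operator applied to $v$. Concretely, writing $\A u=A(\D^k u)$ and $v=\D^{k-1}u$, there is a first-order constant-coefficient operator $\widetilde{\A}$, of the form $\widetilde{\A}w=A(\D w)$ acting on $V\odot^{k-1}\R^n$-valued maps (the symbol being $\widetilde{\A}[\xi]w=A(w\otimes\xi)$ read through the symmetrization $V\otimes^{k-1}\R^n\otimes\R^n\to V\odot^k\R^n$), such that $\widetilde{\A}v=\A u$ as distributions. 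Ellipticity of $\A$, i.e. injectivity of $v\mapsto A(v\otimes^k\xi)$, does \emph{not} immediately give injectivity of $w\mapsto A(w\otimes\xi)$ on all of $V\odot^{k-1}\R^n$, so a small argument is needed here; however, what we actually use is weaker. First, by Lemma~\ref{lem:ell=>sob} we have $u\in\sobo^{k-1,p}_{\locc}$ for $1\le p<n/(n-1)$, so $v=\D^{k-1}u\in\lebe^p_{\locc}\subset\lebe^1_{\locc}$, and moreover $v$ is automatically curl-free (its components satisfy the compatibility relations coming from being a $(k-1)$-st gradient). Thus $v\in\bv^{\widetilde{\A}}_{\locc}$ with $\widetilde{\A}v=\A u\in\mathcal{M}_{\locc}$.

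Next I would apply the first-order case. For a general elliptic first-order $\widetilde{\A}$, \cite[Lem.~3.1]{GR2} gives that every $w\in\bv^{\widetilde{\A}}_{\locc}$ is $\lebe^p$-differentiable a.e., with approximate gradient $\nabla w(x)$ satisfying $\widetilde{\A}^{ac}w(x)=A(\nabla w(x))$. But $\widetilde{\A}$ here need not be elliptic on the full space $V\odot^{k-1}\R^n$. The way around this is to restrict to the linear subspace $\mathcal{G}_{k-1}\subset V\odot^{k-1}\R^n$ of ``$(k-1)$-st gradient values'' (the image of the constant-coefficient map $V\odot^k\R^n\ni T\mapsto$ the induced element of $\lin(\R^n,V\odot^{k-1}\R^n)$), or more simply to observe that $v$ and its approximate gradient both take values in the relevant subspaces and that on these subspaces the relevant symbol map \emph{is} injective precisely because $\A$ is elliptic: if $w=\D^{k-1}\varphi$ for a polynomial $\varphi$ and $A(w\otimes\xi)=A((\D^k\varphi)(\cdot)\,)=0$ with $w\otimes\xi$ corresponding to $\D^k\varphi$ evaluated in the direction $\xi$, ellipticity forces the symmetrized object to vanish. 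Thus the first-order lemma applies to $v$ viewed as a section with values in the gradient subspace, yielding that $v=\D^{k-1}u$ is $\lebe^p$-differentiable a.e., with approximate gradient $\nabla v(x)=\nabla(\D^{k-1}u)(x)\in V\odot^k\R^n$ (the symmetry being inherited, again, from $v$ being a genuine $(k-1)$-st gradient, so that its approximate gradient is a $k$-th gradient value), and with
\begin{align*}
\A^{ac}u(x)=\widetilde{\A}^{ac}v(x)=A(\nabla v(x))=A(\nabla^k u(x)).
\end{align*}

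The one genuinely delicate point is the passage from ``ellipticity of $\A$'' to ``the first-order operator $\widetilde{\A}$ applies to $v$ in an effectively elliptic way''. I expect this to be the main obstacle, and I would handle it by working intrinsically on the gradient subspaces rather than trying to upgrade $\widetilde{\A}$ to a globally elliptic operator: the key facts are (i) $v\in\lebe^1_{\locc}$ takes values in $\mathcal{G}_{k-1}$, (ii) its distributional curl vanishes, so a.e. its approximate gradient takes values in $\mathcal{G}_k$, and (iii) on $\mathcal{G}_k$ the map $T\mapsto A(T)$ has kernel exactly controlled by ellipticity of $\A$. Alternatively one can simply cite that the proof of \cite[Lem.~3.1]{GR2}, which proceeds via the embedding/Poincaré inequality for $\bv^{\widetilde{\A}}$-maps and blow-up, only ever uses $\widetilde{\A}$ through the representation formula of Lemma~\ref{lem:conv} for $\A$ itself (since $v=\D^{k-1}u$ and $u=K^\A\star\A u$), so that the result transfers verbatim with $u$ replaced by $\D^{k-1}u$; this is, in fact, the remark made in the text preceding the lemma. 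Finally, the identity $\A^{ac}u=A(\nabla^k u)$ a.e. follows because, by construction, for a.e. $x$ the polynomial $P^1_x(\D^{k-1}u)$ has gradient $\nabla^k u(x)$, and $\A$ applied to the affine part of $\D^{k-1}u$ is constant equal to $A(\nabla^k u(x))$, which must agree $\mathscr{L}^n$-a.e. with the Radon–Nikodým density $\dif\A u/\dif\mathscr{L}^n$ by Lebesgue differentiation applied to the measure $\A u=\widetilde{\A}v$.
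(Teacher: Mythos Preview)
Your primary strategy---reducing to a first-order elliptic operator $\widetilde{\A}$ acting on $v=\D^{k-1}u$---has a genuine gap that you yourself flag but do not close. The symbol $\widetilde{\A}[\xi]w=A(\sym(w\otimes\xi))$ is \emph{not} injective on $V\odot^{k-1}\R^n$ in general (ellipticity of $\A$ only controls tensors of the form $v\otimes^k\xi$), and your proposed fix via a ``gradient subspace'' $\mathcal{G}_{k-1}$ does not work as stated: at a single point the values of $(k-1)$-st gradients span all of $V\odot^{k-1}\R^n$, so the restriction is vacuous, and in any case \cite[Lem.~3.1]{GR2} requires ellipticity on a full vector space to produce the convolution kernel. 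Your ``alternative'' is the correct route and is exactly what the paper does: bypass $\widetilde{\A}$ entirely and use the representation $\D^{k-1}u=\D^{k-1}K^\A\star\A u$ coming from Lemma~\ref{lem:conv} for $\A$ itself; since $\D^{k-1}K^\A$ is $(1-n)$--homogeneous, \cite[Thm.~3.4]{ABC} gives the $\lebe^p$--differentiability directly, with no need for any first-order ellipticity.

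Your argument for the identity \eqref{eq:algebra_Au} is too quick. Saying that ``$\A$ applied to the affine part of $\D^{k-1}u$ equals $A(\nabla^k u(x))$ and this must agree with $\dif\A u/\dif\mathscr{L}^n$ by Lebesgue differentiation'' skips the actual work: one must link the approximate gradient (a pointwise object) to the distributional action of $\A$ (a measure). The paper does this by mollification: with $u_\varepsilon=u\star\eta_\varepsilon$, one computes $\D^k u_\varepsilon(x)$ explicitly, uses the $\lebe^1$--differentiability of $\D^{k-1}u$ at $x$ to show $\D^k u_\varepsilon(x)\to\nabla^k u(x)$, and separately shows $\A u_\varepsilon(x)=(\A u\star\eta_\varepsilon)(x)\to\A^{ac}u(x)$ at Lebesgue points of $\A^{ac}u$ (the singular part washes out). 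Since $\A u_\varepsilon=A(\D^k u_\varepsilon)$ pointwise, the identity follows. This mollification step also delivers for free the symmetry $\nabla^k u(x)\in V\odot^k\R^n$ (as a limit of genuine $k$-th derivatives), which in your sketch is asserted but not justified.
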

\begin{proof}
There is no loss of generality in assuming that $u\in\bv^\A(\R^n)$, otherwise multiply $u$ with a smooth cut--off function $\rho$ that equals $1$ in an arbitrarily large set. One then uses the Leibniz rule and Lemma~\ref{lem:ell=>sob} to show that $\A(\rho u)\in\mathcal{M}(\R^n,W)$. By Lemma~\ref{lem:conv} and a standard regularization argument, we have that $\D^{k-1}u=D^{k-1}K^\A\star \A u$, where $D^{k-1}K^\A$ is $(1-n)$--homogeneous. The $\lebe^p$--differentiability statement follows from \cite[Thm.~3.4]{ABC}.

It remains to prove \eqref{eq:algebra_Au}. Let $x\in\R^n$ be a Lebesgue point of $D^{k-1}u$ and $\A^{ac}u$, and also a point of $\lebe^1$--differentiability of $u$. We also consider a sequence $(\eta_\varepsilon)_{\varepsilon>0}$ of standard mollifiers, i.e., $\eta_1\in\hold^\infty_c(\ball_1(0))$ is radially symmetric and has integral equal to 1 and $\eta_\varepsilon(y)=\varepsilon^{-n}\eta_1(x/\varepsilon)$. Finally, we denote $u_\varepsilon:=u\star\eta_\varepsilon$ and compute
\begin{align*}
\D^k u_\varepsilon(x)&=\int_{\ball_\varepsilon(x)} D^{k-1}u(y)\otimes\D_x\eta_\varepsilon(x-y)\dif y\\
&=-\int_{\ball_\varepsilon(x)}(P^1_xD^{k-1}u)(y)\otimes\D_y\eta_\varepsilon(y-x)\dif y\\
&\quad+\int_{\ball_\varepsilon(x)} (R^1_xD^{k-1}u)(y)\otimes\D_x\eta_\varepsilon(x-y)\dif y\\
&=\int_{\ball_\varepsilon(x)}\eta_\varepsilon(y-x)\nabla(D^{k-1} u)(x)\dif y+\\
&\quad+\int_{\ball_\varepsilon(x)} (R^1_xD^{k-1}u)(y)\otimes\D_x\eta_\varepsilon(x-y)\dif y\\
&=\nabla^k u(x)+\int_{\ball_\varepsilon(x)}(R^1_xD^{k-1}u)(y)\otimes\D_x\eta_\varepsilon(x-y)\dif y,
\end{align*}
where we used integration by parts to establish the third equality. Since \begin{align*}
\|\D_x\eta(x-\cdot)\|_\infty=\varepsilon^{-(n+1)}\|\D\eta_1\|_\infty,
\end{align*}
we have that $|\D^k u_\varepsilon(x)-\nabla^k u(x)|\leq c(n,\eta_1)\varepsilon^{-1}(|R_xD^{k-1}u|)_{x,\varepsilon}=o(1)$ as $\varepsilon\downarrow0$ since $x$ is a point of $\lebe^1$--differentiability of $D^{k-1}u$. In particular, $\D^k u_\varepsilon\rightarrow\nabla^k u$ $\mathscr{L}^n$--a.e., so that $\A u_\varepsilon\rightarrow A(\nabla^k u)$ $\mathscr{L}^n$--a.e. To conclude, we will also show that $\A u_\varepsilon\rightarrow\A^{ac}u$ $\mathscr{L}^n$--a.e. We have that $\A u_\varepsilon=\A u\star\eta_\varepsilon$, so 
\begin{align*}
\A u_\varepsilon(x)-\A^{ac}u(x)&=\A^{ac}u\star\eta_\varepsilon(x)-\A^{ac}u(x)+\A^s u\star\eta_\varepsilon(x)\\
&=\int_{\ball_\varepsilon(x)}\eta_\varepsilon(x-y)\left(\A^{ac}u(y)-\A^{ac}u(x)\right)\dif y \\
&\quad+\int_{\ball_\varepsilon(x)}\eta_\varepsilon(x-y)\dif \A^s u(y).
\end{align*}
Using the facts that $\|\eta_\varepsilon(x-\cdot)\|_\infty=\varepsilon^{-n}\|\eta_1\|_\infty$ and that $x$ is a Lebesgue point of $\A^{ac}u$, we conclude that both integrals converge to zero as $\varepsilon\downarrow0$.
\end{proof}
\begin{proof}[Proof of sufficiency of ellipticity for Theorem~\ref{thm:sub_crit}]
Assume that $\A$ is elliptic and let $u\in\bv^\A_{\locc}$. From Lemma~\ref{lem:algebra_Au}, we have that for $\mathscr	{L}^n$--a.e. $x\in\R^n$, $\D^{k-1}u\in\taylor^ {1,p}(x)$ for all $1\leq p<n/(n-1)$. Apply \cite[Thm.~11.1]{CZ1} inductively until~\ref{itm:asub} is proved. If $k\geq n$, apply \cite[Thm.~11.1]{CZ1} once more to get \ref{itm:bsub}. If $k>n$, apply \cite[Thm.~11.2]{CZ1} to \ref{itm:bsub} once to get \ref{itm:csub}.
\end{proof}
\subsection{EC operators}\label{sec:EC}
We give an analytic characterization of canceling 
operators: 
\begin{lemma}\label{lem:canc_char}
Let $\A$ as in \eqref{eq:Ak} be elliptic. Then $\A$ is canceling if and only if whenever the equation
\begin{align}\label{eq:canc_eq}
\A u=\delta_0w,
\end{align}
has a solution $u\in\bv^\A_{\locc}$ for some $w\in W$, we necessarily have $w=0$. Moreover, if $\A$ is elliptic and non--canceling, \eqref{eq:canc_eq} has a solution $u_h\in\hold^\infty(\R^n\setminus\{0\},V)$ such that $\D^lu_h$ is $(k-n-l)$--homogeneous for all $l\geq\max\{0,k-n+1\}$.
\end{lemma}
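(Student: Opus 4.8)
The plan is to prove both implications of the equivalence and the supplementary construction at once, the common tool being the explicit kernel $K^\A$ of Lemma~\ref{lem:conv}.

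\emph{Reverse implication and the ``moreover'' statement.} Suppose $\A$ is elliptic but not canceling, and fix $w\in\bigcap_{\xi\in\mathbb{S}^{n-1}}\mathrm{im}\,\A[\xi]$ with $w\neq0$. The natural candidate is $u_h\coloneqq K^\A(\bigcdot)w$. By Lemma~\ref{lem:conv} we immediately get $u_h\in\lebe^1_{\locc}\cap\hold^\infty(\R^n\setminus\{0\},V)$, and $\D^l u_h=(\D^lK^\A)w$ is $(k-n-l)$--homogeneous for all $l\geq\max\{0,k-n+1\}$; so all the regularity and homogeneity properties claimed for $u_h$ are inherited from $K^\A$, and $u_h\in\bv^\A_{\locc}$ as soon as we check $\A u_h\in\mathcal{M}_{\locc}$.

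\emph{Verification that $\A u_h=\delta_0 w$.} Taking Fourier transforms in the identity $K^\A\star\A v=v$ for $v\in\hold^\infty_c$ shows that on $\R^n\setminus\{0\}$ the smooth matrix-valued function $\widehat{K^\A}(\xi)$ is a left inverse of $\A[\xi]$ (explicitly $\widehat{K^\A}(\xi)=\A^\dagger[\xi]$); consequently $\A[\xi]\widehat{K^\A}(\xi)$ is an idempotent with range $\mathrm{im}\,\A[\xi]$ that fixes $\mathrm{im}\,\A[\xi]$ pointwise. Since $\mathrm{im}\,\A[\xi]=\mathrm{im}\,\A[\xi/|\xi|]$ for $\xi\neq0$ and $w$ lies in every such image, we obtain $\widehat{\A u_h}(\xi)=\A[\xi]\widehat{K^\A}(\xi)w=w$ for all $\xi\neq0$. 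Hence $\widehat{\A u_h}-w$ is supported at the origin, so $\A u_h-\delta_0 w$ is a polynomial $p$; but on $\R^n\setminus\{0\}$ one has $\A u_h=A(\D^kK^\A\,w)$, which is $(-n)$--homogeneous by Lemma~\ref{lem:conv} (with $l=k\geq\max\{0,k-n+1\}$) and therefore decays at infinity, forcing $p\equiv0$. Thus $\A u_h=\delta_0 w$. This proves the ``moreover'' part, and also establishes the reverse implication in contrapositive form: a non-canceling elliptic $\A$ admits a solution of \eqref{eq:canc_eq} in $\bv^\A_{\locc}$ with $w\neq0$.

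\emph{Forward implication.} Let $\A$ be elliptic and canceling, and suppose $\A u=\delta_0 w$ for some $u\in\bv^\A_{\locc}$ and $w\in W$; we must show $w=0$. Since $u$ need not be tempered, we localize: pick $\rho\in\hold^\infty_c(\R^n)$ with $\rho\equiv1$ near $0$. Expanding $\A(\rho u)$ by the Leibniz rule and using Lemma~\ref{lem:ell=>sob} (so that $\D^{k-j}u\in\lebe^1_{\locc}$ for $j\geq1$), we get $\A(\rho u)=\rho\,\A u+g=\delta_0 w+g$ with $g\in\lebe^1(\R^n,W)$ compactly supported. Now $\rho u\in\lebe^1$ has compact support, hence is tempered, and Fourier transformation gives $\A[\xi]\widehat{\rho u}(\xi)=w+\hat g(\xi)$ for all $\xi$. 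Fix $\omega\in\mathbb{S}^{n-1}$ and let $t\to\infty$ along $\xi=t\omega$: then $w+\hat g(t\omega)\in\mathrm{im}\,\A[t\omega]=\mathrm{im}\,\A[\omega]$ for each $t>0$, while $\hat g(t\omega)\to0$ by the Riemann--Lebesgue lemma, so, $\mathrm{im}\,\A[\omega]$ being a closed subspace, $w\in\mathrm{im}\,\A[\omega]$. As $\omega$ was arbitrary, $w\in\bigcap_{\omega\in\mathbb{S}^{n-1}}\mathrm{im}\,\A[\omega]=\{0\}$ by the canceling hypothesis, whence $w=0$.

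\emph{Main obstacle.} The delicate step is the identity $\A u_h=\delta_0 w$: because $\A$ is overdetermined, $\A K^\A$ is \emph{not} $\delta_0\id_W$ (its symbol is the orthogonal projection onto $\mathrm{im}\,\A[\xi]$, not the identity), so the computation genuinely exploits the membership of $w$ in \emph{every} $\mathrm{im}\,\A[\xi]$ — this is exactly where cancellation enters — and in addition one must exclude a polynomial error, which is done through the sharp homogeneity of $\D^kK^\A$ supplied by Lemma~\ref{lem:conv}. In the forward direction the only subtlety is passing to a tempered distribution and ensuring the Leibniz remainder $g$ is regular enough ($\lebe^1$, compactly supported) for Riemann--Lebesgue to apply, which is precisely what the local Sobolev regularity of Lemma~\ref{lem:ell=>sob} guarantees.
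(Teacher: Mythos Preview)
Your proof is correct. The reverse implication and the ``moreover'' construction follow the paper's approach closely: both set $u_h=K^\A w$ and verify $\A u_h=\delta_0 w$ via the identity $\A[\xi]\A^\dagger[\xi]w=w$ for $w$ in every $\mathrm{im}\,\A[\xi]$. The only technical difference is how you pass from equality of Fourier transforms on $\R^n\setminus\{0\}$ to equality on all of $\R^n$: the paper invokes H\"ormander's homogeneous extension machinery (in particular \cite[(3.2.26)]{HGOD1}) to obtain $\A[\cdot]\widehat{u_h}^{\bigcdot}=w^{\bigcdot}$ directly, whereas you note that the discrepancy must be a polynomial and kill it using the $(-n)$--homogeneity (hence decay at infinity) of $\A u_h=A(\D^kK^\A)w$ away from the origin. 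Both arguments are valid.

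The forward implication, however, is genuinely different. The paper constructs an exact annihilator $\mathcal{A}$ of $\A$ (so $\ker\mathcal{A}[\xi]=\mathrm{im}\,\A[\xi]$ for $\xi\neq0$), applies it to $\A u=\delta_0 w$ to get $\mathcal{A}[\xi]w=0$ for all $\xi\neq0$, and reads off $w\in\bigcap_\xi\mathrm{im}\,\A[\xi]$. Your route avoids the annihilator entirely: you localize $u$ by a cut--off $\rho$, use Lemma~\ref{lem:ell=>sob} to see that the Leibniz remainder $g$ lies in $\lebe^1$ with compact support, and then take Fourier transforms to obtain $w+\hat g(t\omega)\in\mathrm{im}\,\A[\omega]$ for all $t>0$; Riemann--Lebesgue sends $\hat g(t\omega)\to0$, and closedness of the finite--dimensional subspace $\mathrm{im}\,\A[\omega]$ finishes. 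This is more elementary in that it needs neither the existence of an exact annihilator nor any distributional Fourier analysis beyond the standard $\lebe^1$ theory; the paper's argument is shorter once the annihilator is in hand, and has the minor advantage of not appealing to Lemma~\ref{lem:ell=>sob} at this stage.
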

\begin{proof}
Suppose that $\A$ is EC. By ellipticity, $\A$ has an exact annihilator (analogous to $\curl$ for $\A=\D$), i.e., there exists a homogeneous differential operator $\mathcal{A}$ such that $\ker\mathcal{A}[\xi]=\mathrm{im\,}\A[\xi]$ for all $\xi\neq0$. An example would be the projection operator defined by 
\begin{align*}
\mathcal{A}[\xi]\coloneqq \det(\Delta_\A[\xi])\left(\text{Id}-\A[\xi](\Delta_\A[\xi])^{-1}\A^*[\xi]\right),
\end{align*}
where $\Delta_\A\coloneqq \A^*\A$ (see \cite[Rk.~4.1, Sec.~4.2]{VS}). Applying $\mathcal{A}$ to \eqref{eq:canc_eq}, we get that $\mathcal{A}(\delta_0w)=0$. We apply the Fourier transform to get that $\mathcal{A}[\xi]w=0$ for each non--zero $\xi$. In particular, 
\begin{align*}
w\in\bigcap_{\xi\in \mathbb{S}^{n-1}}\ker\mathcal{A}[\xi]=\bigcap_{\xi\in \mathbb{S}^{n-1}}\mathrm{im\,}\A[\xi],
\end{align*}
so that $w=0$.

Conversely, suppose that $\A$ is elliptic, non--canceling so there exists a non--zero $w\in\mathrm{im\,}\A[\xi]$ for any non--zero $\xi$. We define the map $u_h$ via 
\begin{align*}
u_h\coloneqq K^\A w,
\end{align*}
where, $K^\A$ is as in Lemma~\ref{lem:conv}. Recall from the proof of \cite[Lem.~2.1]{BVS} that $K^\A$ is, in addition, a tempered distribution such that 
\begin{align*}
\widehat{K^\A}(\xi)=\A^\dagger[\xi]\quad\text{ for }\xi\neq0,
\end{align*}
where $\A^\dagger[\xi]=(\Delta_\A[\xi])^{-1}\A^*[\xi]$. To be precise, $\widehat{K^\A}=(\A^\dagger[\cdot])^{\bigcdot}$ in the sense of tempered distributions, where $f\mapsto f^{\bigcdot}$ denotes the extension of a homogeneous distribution $f\in(\mathscr{D}^\prime\cap \hold^{\infty})(\R^n\setminus\{0\})$, as defined in \cite[Thm.~3.2.3-4]{HGOD1}. In this case, $f^{\bigcdot}$ is a tempered distribution by \cite[Thm.~7.1.18]{HGOD1}.

It is now clear that the smoothness and homogeneity properties of $u_h$ follow from those of $K^\A$. The $\bv^\A_{\locc}$--regularity follows from Lemma~\ref{lem:ell=>sob}, as soon as we prove that $\A u_h=\delta_0 w$. This follows from the fact that 
\begin{align*}
\widehat{\A u_h}(\xi)=\A[\xi]\widehat{u_h}(\xi)=\A[\xi]\A^\dagger[\xi]w=w,\quad\text{ for }\xi\neq0,
\end{align*}
where the last inequality follows from the definition of $w$ and the elementary fact that $\A[\xi]\A^\dagger[\xi]$ is the orthogonal projection onto $\mathrm{im\,}\A[\xi]$. This implies by \cite[(3.2.26)]{HGOD1} that $\A[\cdot]\widehat{u_h}^{\bigcdot}=w^{\bigcdot}$. Fourier inverting the previous equality gives \eqref{eq:canc_eq}.
\end{proof}
In fact, while proving Lemma~\ref{lem:canc_char}, we have proved the independently interesting fact that, for an elliptic operator $\A$ as in \eqref{eq:Ak}, we have
\begin{align*}
\bigcap_{\xi\in\mathbb{S}^{n-1}}\mathrm{im\,}\A[\xi]=\{w\in W\colon \A u=\delta_0 w\text{ for some }u\in\bv^\A_{\locc}\}.
\end{align*}
We will not make explicit use of this fact, but we will outline some consequences:
\begin{enumerate}
\item If $\A$ is determined ($\dim V=\dim W$), then $\A$ has a genuine fundamental solution $E$, by which we mean $\A E=\id_V \delta_0$ (cp. proof of Lemma~\ref{lem:ell=>sob}).
\item If $\A$ is overdetermined ($\dim V<\dim W$), assuming that the latter is minimal), no such $E$ exists. The kernel $K^\A$ from Lemma~\ref{lem:conv} is a significantly less flexible replacement.
\end{enumerate}
This highlights a sharp difference between both the scalar and vectorial case (recall \cite{Ehrenpreis,Malgrange}), as well as between the determined and overdetermined case.
\section{Proof of Theorem~\ref{thm:main_infty}}\label{sec:proof_infty}
We begin with a formal algebraic computation in Fourier Space, which can be made precise (equality in the sense of tempered distributions) by the arguments in the proof of \cite[Lem.~2.1]{BVS}. For $u$ in $\hold^\infty_c(\R^n,V)$ and $\xi\in\R^n$, we have that
\begin{align*}
\A[\xi]\hat{u}(\xi)&=\widehat{\A u}(\xi)\\
\hat{u}(\xi)&=\A^\dagger[\xi]\widehat{\A u}(\xi)\\
\widehat{D^{k-n}u}(\xi)&=L[\xi]\widehat{\A u}(\xi)\\
D^{k-n}u&=\check{L}\star\A u,
\end{align*}
where $L[\xi]\in\lin(W,V\odot^{k-n}\R^n)$ is given by
\begin{align*}
L[\xi]w\coloneqq \A^\dagger[\xi]w\otimes^{k-n}\xi
\end{align*}
for $\xi\in\R^n{\setminus}\{0\}$, $w\in W$. We can also assert that $\check{L}\in\hold^\infty(\R^n\setminus\{0\})\cap\lebe^1_{\locc}(\R^n)$, valued in $\lin(W,V\odot^{n-k}\R^n)$.

Essentially by \cite[Thm.~7.1.20]{HGOD1}, we can write
\begin{align}\label{eq:conv_ker}
\check{L}=H_0+\log|\cdot|\int_{\mathbb{S}^{n-1}}L[\xi]\dif\mathcal{H}^{n-1}(\xi)=H_0+\log|\cdot|\mathcal{L},
\end{align}
where $H_0\in\hold^\infty(\R^n\setminus\{0\},\lin(W,V\odot^{n-k}\R^n))$ is zero--homogeneous. In particular, $H_0$ is essentially bounded in $\R^n$. We prove Formula \eqref{eq:conv_ker} at the end of this section.

\begin{proof}[Proof of necessity] 
	Let $w$ fail condition~\eqref{eq:mistery_cond2}. Then $w\neq0$, so there exists $u_h\in\lebe^1_{\locc}(\R^n,V)$ such that $\A u_h=\delta_0 w$ by the proof of Lemma~\ref{lem:canc_char}. By \eqref{eq:conv_ker},
\begin{align*}
\|D^{k-n}u_h\|_{\lebe^\infty}\geq|\|H_0w\|_{\lebe^\infty}-\|\log|\cdot|\mathcal{L}w\|_{\lebe^{\infty}}|,
\end{align*}
which is clearly infinite (near 0) since $\mathcal{L}w\neq0$ and $H_0$ is bounded.
\end{proof}
We next show that condition~\eqref{eq:mistery_cond2} is sufficient for the estimate \eqref{eq:VS_k=n}. The ideas we use originate in \cite[Sec.~2]{BVS}. By \eqref{eq:conv_ker}, the triangle inequality, and Young's Convolution Inequality, we have that
\begin{align*}
\|D^{k-n}u\|_{\lebe^\infty}&\lesssim\|H_0\star\A u\|_{\lebe^\infty}+\|\log|\cdot|\star[\mathcal{L}\A u]\|_{\lebe^\infty}\\
&\leq\|H_0\|_{\lebe^\infty}\|\A u\|_{\lebe^1}+\|\log|\cdot|\star[\mathcal{L}\A u]\|_{\lebe^\infty},
\end{align*}
so it suffices to prove that 
\begin{align*}
\|\log|\cdot|\star[\mathcal{L}\A u]\|_{\lebe^\infty}\lesssim\|\A u\|_{\lebe^1},
\end{align*}
for $u\in\hold^\infty_c(\R^n,V)$. Equivalently, we want to show that for all $v\in V$, $\eta\in\R^n$ of unit length, we have that
\begin{align}\label{eq:log_est}
\left|\int_{\R^n}\langle\log|y|v\otimes^{k-n}\eta,\mathcal{L}\A u(x-y)\rangle\dif y\right|\lesssim\|\A u\|_{\lebe^1}\quad\text{for }\mathscr{L}^n\text{--a.e. }x\in\R^n.
\end{align}
The proof of \eqref{eq:log_est} will follow quite easily from the following Lemma:
\begin{lemma}\label{lem:BVS_var}
Let $\A$ be elliptic and satisfying condition~\eqref{eq:mistery_cond2}. Then there exists an integer $l$ such that for all $u\in\hold^\infty_c(\R^n,V)$ and all $\varphi\in\hold^\infty(\R^n\setminus\{0\},\mathrm{im\,}\mathcal{L}^*)$ such that $|\cdot|^j|D^j\varphi|\in\lebe^1_{\locc}(\R^n)$ for $j=0,1,\ldots, l$ we have that
\begin{align*}
\left|\int_{\R^n}\langle\varphi,\A u\rangle\dif x\right|\lesssim\sum_{j=1}^l\int_{\R^n}|\A u||\cdot|^j|D^j\varphi|\dif x.
\end{align*}
\end{lemma}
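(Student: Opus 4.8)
The plan is to adapt the argument of \cite[Sec.~2]{BVS}. The structural fact that makes the inequality possible — and that distinguishes $\A u$ from an arbitrary $\lebe^1$--function — is that $\A u=A(D^ku)$ is a pure $k$-th order derivative of the compactly supported map $u$, so all its moments of order $<k$ vanish; the proof extracts this, combined with the ellipticity of $\A$ through the kernel $K^\A$ of Lemma~\ref{lem:conv}, with condition~\eqref{eq:mistery_cond2} entering only to control a single critical contribution.

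First I would reduce to $\varphi$ of compact support: fixing $R$ with $\spt u\subset\ball_R(0)$ and a cut--off $\zeta\in\hold^\infty_c$ that is $\equiv1$ on $\ball_{2R}(0)$ and supported in $\ball_{4R}(0)$, replacing $\varphi$ by $\zeta\varphi$ alters neither side — the left--hand side because $\zeta\equiv1$ on $\spt\A u$, the right--hand side because every Leibniz term $|\cdot|^j|D^{j-i}\varphi|\,|D^i\zeta|$ with $i\geq1$ lives in the annulus $\{2R\leq|\cdot|\leq4R\}$, disjoint from $\spt\A u$ — and $\zeta\varphi$ still lies in $\hold^\infty(\R^n{\setminus}\{0\},\mathrm{im\,}\mathcal L^*)$ with the same local integrability. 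With $\varphi$ compactly supported, I would use a dyadic partition of unity $1=\sum_\nu\psi_\nu$ away from the origin ($\psi_\nu$ supported where $|\cdot|\sim 2^{-\nu}$, so that $\varphi$ is smooth on $\spt\psi_\nu$) and, on each piece, pass to the formal transpose, $\int\langle\psi_\nu\varphi,\A u\rangle\dif x=\langle\A^{\mathrm t}(\psi_\nu\varphi),u\rangle$, then substitute $u=K^\A\star\A u$ (Lemma~\ref{lem:conv}) to return to $\A u$. Two devices then produce the right--hand side. First, since $\A u$ annihilates polynomials of degree $<k$, one may subtract from $\psi_\nu\varphi$ (and from the functions produced after convolving with $K^\A$) a Taylor polynomial of degree $<k$ about the centre of $\spt\psi_\nu$; Taylor's theorem turns the resulting remainders into the weighted quantities $|\cdot|^j|D^j\varphi|$ with $1\le j\le l$, the powers of $|\cdot|$ coming from the scale $2^{-\nu}$ of the support. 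Second, the parts of $K^\A$ homogeneous of negative degree give Calder\'on--Zygmund-type convolutions whose kernels have vanishing spherical mean, so that their ``degree--zero'' action is annihilated; this is where the analysis is genuinely critical, because at homogeneity exactly $-n$ — the homogeneity of $D^kK^\A$, equivalently of the logarithmic term in \eqref{eq:conv_ker} — the spherical mean of the relevant kernel need not vanish, and its value is, by H\"ormander's formula \cite[Thm.~7.1.20]{HGOD1} (the one behind \eqref{eq:conv_ker}), precisely the operator $\mathcal L$. Since $\varphi$ is $\mathrm{im\,}\mathcal L^*$--valued and $\A$ satisfies~\eqref{eq:mistery_cond2}, this critical mean acts as zero on $\varphi$, so the one term it would otherwise leave behind disappears; summing over the (boundedly overlapping) dyadic pieces then recovers the claimed estimate.

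I expect the crux — and the only place a hypothesis beyond ellipticity is used — to be the treatment of this critical homogeneity $-n$: identifying the obstructing spherical mean with the operator $\mathcal L$ of~\eqref{eq:mistery_cond2}, exactly as the $\log|\cdot|\mathcal L$ term captures the failure of $K^\A$ to be homogeneous in \eqref{eq:conv_ker}, and reading off from $\varphi\in\mathrm{im\,}\mathcal L^*$ and~\eqref{eq:mistery_cond2} that it annihilates $\varphi$; this step needs care with the regularization of homogeneous distributions at the borderline degree, and it is really here that the ``$j=0$'' (unweighted) contributions are killed. Everything else is bookkeeping: justifying the transpose pairing, the substitution $u=K^\A\star\A u$, and the interchange of integrations by absolute convergence (legitimate since $\varphi$, $\A^{\mathrm t}(\psi_\nu\varphi)$ and $\A u$ are compactly supported and $K^\A$ is locally integrable), checking summability of the dyadic pieces and that the surviving terms genuinely carry the weights $|\cdot|^j|D^j\varphi|$, and fixing $l$ (in terms of $n$, $k$, and the order of Taylor expansion used) independently of $u$ and $\varphi$.
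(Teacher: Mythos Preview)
Your crucial step does not work as stated. You assert that ``since $\varphi$ is $\mathrm{im\,}\mathcal{L}^*$--valued and $\A$ satisfies~\eqref{eq:mistery_cond2}, this critical mean acts as zero on $\varphi$''. But $\mathrm{im\,}\mathcal{L}^*=(\ker\mathcal{L})^\perp$, so $\mathcal{L}$ is \emph{injective} on $\mathrm{im\,}\mathcal{L}^*$, not zero there. Condition~\eqref{eq:mistery_cond2} says that $\mathcal{L}$ vanishes on $\bigcap_{\xi}\mathrm{im\,}\A[\xi]$, a subspace \emph{orthogonal} to $\mathrm{im\,}\mathcal{L}^*$. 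Your scheme (transpose, substitute $u=K^\A\star\A u$, dyadic pieces, Taylor subtraction via vanishing moments of $\A u$) nowhere visibly produces a pairing of $\varphi$ against an element of $\bigcap_{\xi}\mathrm{im\,}\A[\xi]$ on which~\eqref{eq:mistery_cond2} could bite --- and note that the vanishing--moments property $\int\langle Q,\A u\rangle=0$ for $\deg Q<k$ holds for \emph{every} homogeneous $k$--th order operator, so it cannot by itself account for where~\eqref{eq:mistery_cond2} enters.

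The paper's argument is structurally different and uses neither $K^\A$ nor dyadic decompositions nor moments. It takes an exact annihilator $\mathcal{A}=\sum_{|\alpha|=l}\mathcal{A}_\alpha\partial^\alpha$ of $\A$ (so $\ker\mathcal{A}[\xi]=\mathrm{im\,}\A[\xi]$ for $\xi\neq0$); condition~\eqref{eq:mistery_cond2} then reads $\mathrm{im\,}\mathcal{L}^*\cap\bigcap_{\xi}\ker\mathcal{A}[\xi]=\{0\}$, i.e.\ the linear map $w\mapsto(\mathcal{A}_\alpha w)_{|\alpha|=l}$ is injective on $\mathrm{im\,}\mathcal{L}^*$. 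A left inverse yields a polynomial $P(x)=\sum_{|\alpha|=l}\tfrac{x^\alpha}{\alpha!}K_\alpha^*$ with $\mathcal{A}^*P=\id_{\mathrm{im\,}\mathcal{L}^*}$. Since $\mathcal{A}(\A u)=0$, integration by parts gives $\int\langle\mathcal{A}^*[P\varphi],\A u\rangle=0$, whence
\[
\int_{\R^n}\langle\varphi,\A u\rangle\dif x=\int_{\R^n}\bigl\langle[\mathcal{A}^*P]\varphi-\mathcal{A}^*[P\varphi],\,\A u\bigr\rangle\dif x,
\]
and the Leibniz expansion of this commutator has no zeroth--order term in $\varphi$: only $D^j\varphi$ against $D^{l-j}P$, the latter homogeneous of degree $j$, giving precisely the weights $|\cdot|^j|D^j\varphi|$. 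Thus the role of~\eqref{eq:mistery_cond2} is to furnish the polynomial right--inverse $P$ of $\mathcal{A}^*$ on $\mathrm{im\,}\mathcal{L}^*$, not to make any spherical mean annihilate $\varphi$.
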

Lemma~\ref{lem:BVS_var} ammounts to a minor, but crucial algebraic modification of \cite[Lem.~2.2]{BVS} (cp. \cite[Prop.~8.9]{VS}; see also \cite[Lem.~2.5]{VS}), ideas of which originate in \cite{BB07,VS_BMO} (cp. \cite{Mazya_JEMS,BousquetMironescu}).
\begin{proof}[Proof of Lemma~\ref{lem:BVS_var}]
We denote by $\mathcal{A}=\sum_{|\alpha|=l}\partial^\alpha\mathcal{A}_\alpha$ an exact annihilator of $\A $, i.e.,
$
\ker\mathcal{A}[\xi]=\mathrm{im\,}\A[\xi]
$
for all $\xi\in\R^n\setminus\{0\}$. Such an operator exists by \cite[Prop.~4.2]{VS}.

Since $(\xi^\alpha)_{|\alpha|=l}$ is a basis for homogeneous polynomials of degree $l$, we have that $w_0\in\ker\mathcal{A}[\xi]$ for all $\xi\neq0$ is equivalent with $w_0$ lying in the kernel of the map $T\colon w\mapsto(\mathcal{A}_\alpha w)_{|\alpha|=l}$. By condition~\eqref{eq:mistery_cond2}, we have that $\mathrm{im\,}\mathcal{L}^*\cap\bigcap_{\xi\in\mathbb{S}^{n-1}}\ker\mathcal{A}[\xi]=\{0\}$, hence the restriction of $T$ to $\mathrm{im\,}\mathcal{L}^*$ is injective. Equivalently, this restriction is left--invertible, so there exist linear maps $K_\alpha\in\lin(W,\mathrm{im\,}\mathcal{L}^*)$ such that
\begin{align*}
\sum_{|\alpha|=l}K_\alpha \mathcal{A}_\alpha\restriction_{\mathrm{im\,}\mathcal{L}^*}=\id_{\mathrm{im\,}\mathcal{L}^*}.
\end{align*}
The remainder of the proof follows exactly as in \cite[pp.1426]{BVS}. We reproduce the argument for the convenience of the reader.

Define the matrix--valued field
\begin{align*}
P(x)\coloneqq\sum_{|\alpha|=l}\dfrac{x^\alpha}{\alpha!}K_\alpha^*,
\end{align*}
which is essentially a right--inverse (integral) of $\mathcal{A}^*$, as
\begin{align}\label{eq:right_FS}
\mathcal{A}^*P=\sum_{|\alpha|=l}\mathcal{A}^*_\alpha\partial^\alpha P=\sum_{|\alpha|=l}\mathcal{A}^*_\alpha K^*_{\alpha}=\id_{\mathrm{im\,}\mathcal{L}^*}.
\end{align}
We next claim that the following integration by parts formula holds
\begin{align}\label{eq:ibp}
	0=\int_{\R^n}\langle P\varphi,\mathcal{A}(\A u)\rangle\dif x=(-1)^l\int_{\R^n}\langle \mathcal{A}^*[P\varphi],\A u\rangle\dif x,
\end{align}
where the first equality follows simply by $\mathcal{A} \circ\A\equiv0$. To prove the second equality, we consider cut--off functions $\rho_r\in\hold^\infty_c(B_{2r}(0),[0,1])$ such that $\rho=1$ in $B_r(0)$ and $|D^j\rho_r|\lesssim r^{-j}$ for $j=0,\ldots, l$. It is clear that $D^j\rho_r\rightarrow 0$ $\mathscr{L}^n$--almost everywhere as $r\downarrow0$ and $|D^j\rho_r|\lesssim|\cdot|^{-j}$ for $j=0,\ldots l$. By the dominated convergence theorem and integration by parts for smooth maps, we have that
\begin{align*}
	\int_{\R^n}\langle P\varphi,\mathcal{A}(\A u)\rangle\dif x&=\lim_{r\downarrow0}\int_{\R^n}\langle (1-\rho_r) P\varphi,\mathcal{A}(\A u)\rangle\dif x\\
	&=(-1)^l\lim_{r\downarrow0}\bigg(\int_{\R^n}\langle (1-\rho_r)\mathcal{A}^*[P\varphi],\A u\rangle\dif x+\bigg.\\
	&\bigg.+\sum_{j=1}^lB_j(D^j\rho_r,D^{l-j}[P\varphi])\dif x\bigg),
\end{align*} 
where $B_j$ are bilinear pairings on finite dimensional spaces that depend on $\mathcal{A} $ only.  By the Leibniz rule and the assumption on the singularity of  $\varphi$ at zero, we have that $|\cdot|^{j}D^{l-j}[P\varphi]\in\lebe^1_{\locc}$ for $j=0,\ldots,l$. This enables us to conclude by the dominated convergence theorem applied to each term above that \eqref{eq:ibp} holds.

By definition of $\varphi$, \eqref{eq:right_FS}, and \eqref{eq:ibp}, we have that
\begin{align*}
\left|\int_{\R^n}\langle\varphi,\A u\rangle\dif x\right|&=\left|\int_{\R^n}\langle[\mathcal{A}^*P]\varphi,\A u\rangle\dif x\right|\\
&=\left|\int_{\R^n}\langle[\mathcal{A}^*P]\varphi-\mathcal{A}^*[P\varphi],\A u\rangle\dif x\right|.
\end{align*}
The presence of the ``associator'' ensures the elimination of the essentially inhomogeneous zero--th order term $\varphi$:
\begin{align*}
[\mathcal{A}^*P]\varphi-\mathcal{A}^*[P\varphi]=[\mathcal{A}^*P]\varphi-[\mathcal{A}^*P]\varphi-\sum_{j=1}^l\tilde B_j(D^j\varphi,D^{l-j}P),
\end{align*}
where $\tilde B_j$ is another set of bi--linear pairings arising also from the product rule. By $l$--homogeneity of $P$, the conclusion follows.
\end{proof}

\begin{proof}[Proof of sufficiency] To prove \eqref{eq:log_est}, we need only apply Lemma~\ref{lem:BVS_var} to the map 
\begin{align*}
\varphi\coloneqq\log|\cdot|\mathcal{L}^*(v\otimes^{k-n}\eta),
\end{align*}
which has suitably homogeneous derivatives, to get that
\begin{align*}
\mathrm{LHS}\eqref{eq:log_est}&\lesssim\sum_{j=1}^l\int_{\R^n}|\A u(x-y)||y|^j|D^j\log|y|\|\mathcal{L}^*\|\dif y\\
&\lesssim\|\mathcal{L}^*\|\|\A u\|_{\lebe^1}.
\end{align*}
The proof is complete.
\end{proof}
\begin{proof}[Proof of Formula \eqref{eq:conv_ker}] It is shown in \cite[Thm.~3.2.4,~7.1.18]{HGOD1} that a $(-n)$--homo-geneous map $f\in\hold^\infty(\R^n\setminus\{0\})$ can be extended to a tempered distribution $f^{\bigcdot}$ satisfying a weakened homogeneity property \cite[(3.2.24)$^\prime$]{HGOD1}. The (inverse) Fourier Transform of $f^{\bigcdot}$ then equals $h+\log|\cdot|\int_{\mathbb{S}^{n-1}}f\dif\mathscr{H}^{n-1}$ by \cite[(7.1.19)]{HGOD1}, where $h$ is a $0$--homogeneous map in $\hold^\infty(\R^n\setminus\{0\})$. We apply this to $L$ component wise.
\end{proof}
\section{Remarks on and consequences of Theorem~\ref{thm:main_infty}}\label{sec:rk_Linfty}
\subsection{Embeddings of $\sobo^{\A,1}$ and $\bv^\A$}\label{sec:WA1}
Throughout this Section, we restrict our attention to operators $\A$ on $\R^n$ of order $k=n$ and compare our result with the embeddings $\sobo^{n,1}(\R^n)\hookrightarrow\hold_0$ (the space of continuous functions that vanish at infinity) and $\bv^n(\R^n)\hookrightarrow\lebe^\infty$. To this end, we consider the space
\begin{align*}
\sobo^{\A,1}(\R^n)\coloneqq\{u\in\lebe^1(\R^n,V)\colon\A u\in\lebe^1(\R^n,W)\},
\end{align*}
which is a Banach space when endowed with the obvious norm $\|u\|_{\sobo^{\A,1}}\coloneqq\|u\|_{\lebe^1}+\|\A u\|_{\lebe^1}$ (see also \cite{BDG,GR}). We have the following:
\begin{theorem}\label{thm:sobo}
Let $\A$ be as in \eqref{eq:Ak} be elliptic of order $k= n\geq 1$. The following are equivalent:
\begin{enumerate}
\item $\A$ satisfies \eqref{eq:mistery_cond2}.
\item\label{it:sobo} $\sobo^{\A,1}(\R^n)\hookrightarrow \hold_0(\R^n,V)$.
\item\label{it:bv} $\bv^{\A}(\R^n)\subset \lebe^{\infty}(\R^n,V)$ with $\|u\|_{\lebe^\infty}\leq c|\A u|(\R^n)$ for $u\in\bv^\A(\R^n)$.
\end{enumerate}
\end{theorem}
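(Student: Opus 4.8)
The plan is to run the cycle $(1)\Rightarrow(3)\Rightarrow(2)\Rightarrow(1)$; since $k=n$ here, one has $\D^{k-n}u=u$, and Theorem~\ref{thm:main_infty} is the only substantial analytic input. For $(1)\Rightarrow(3)$, Theorem~\ref{thm:main_infty} already provides $\|v\|_{\lebe^\infty}\lesssim\|\A v\|_{\lebe^1}$ for all $v\in\hold^\infty_c(\R^n,V)$, and it remains to upgrade this to $u\in\bv^\A(\R^n)$ with merely measure-valued $\A u$. I would mollify, $u_\varepsilon\coloneqq u\star\eta_\varepsilon\in\hold^\infty(\R^n,V)\cap\lebe^1(\R^n,V)$, so that $\A u_\varepsilon=(\A u)\star\eta_\varepsilon$ satisfies $\|\A u_\varepsilon\|_{\lebe^1}\leq|\A u|(\R^n)$ while all derivatives of $u_\varepsilon$ lie in $\lebe^1(\R^n)$. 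Then I would cut off with $\rho_R\in\hold^\infty_c(B_{2R}(0),[0,1])$, $\rho_R\equiv1$ on $B_R(0)$, $|\D^j\rho_R|\lesssim R^{-j}$, apply the $\hold^\infty_c$-estimate to $\rho_R u_\varepsilon$, and expand $\A(\rho_R u_\varepsilon)=\rho_R\A u_\varepsilon+\sum_{j=1}^kB_j(\D^j\rho_R,\D^{k-j}u_\varepsilon)$ by the Leibniz rule; since $\D^{k-j}u_\varepsilon\in\lebe^1(\R^n)$, the commutator terms have $\lebe^1$-norm $\lesssim\sum_jR^{-j}\|\D^{k-j}u_\varepsilon\|_{\lebe^1(B_{2R}(0)\setminus B_R(0))}\to0$ as $R\to\infty$. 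Letting $R\to\infty$ (with $\|\rho_R u_\varepsilon\|_{\lebe^\infty}\to\|u_\varepsilon\|_{\lebe^\infty}$) gives $\|u_\varepsilon\|_{\lebe^\infty}\leq c|\A u|(\R^n)$, and then $\varepsilon\downarrow0$, using $u_\varepsilon\to u$ in $\lebe^1$ hence a.e.\ along a subsequence, yields $(3)$.

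For $(3)\Rightarrow(2)$: as $\sobo^{\A,1}(\R^n)\subset\bv^\A(\R^n)$ with $|\A u|(\R^n)=\|\A u\|_{\lebe^1}$ there, condition $(3)$ already gives $\|u\|_{\lebe^\infty}\leq c\|\A u\|_{\lebe^1}\leq c\|u\|_{\sobo^{\A,1}}$, so only $\hold_0$-membership needs proof. Applying $(3)$ to the differences $u_\varepsilon-u_{\varepsilon'}\in\sobo^{\A,1}$ shows $(u_\varepsilon)$ is Cauchy in $\lebe^\infty$, hence converges uniformly to a continuous representative of $u$; each $u_\varepsilon$ is Lipschitz, because $\|u\star\D\eta_\varepsilon\|_{\lebe^\infty}<\infty$, and lies in $\lebe^1(\R^n)$, hence vanishes at infinity, and a uniform limit of such maps lies in $\hold_0(\R^n,V)$.

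For $(2)\Rightarrow(1)$ I would argue by contraposition. If some $w\in\bigcap_{\xi\in\mathbb{S}^{n-1}}\mathrm{im\,}\A[\xi]$ has $\mathcal{L}w\neq0$, then $w\neq0$, so the proof of Lemma~\ref{lem:canc_char} produces $u_h\in\hold^\infty(\R^n\setminus\{0\},V)\cap\lebe^1_{\locc}$ with $\A u_h=\delta_0w$; moreover, since $k=n$ forces $\check{L}=K^\A$, Formula~\eqref{eq:conv_ker} yields $u_h=H_0w+\log|\cdot|\,\mathcal{L}w$ with $H_0$ bounded. Fixing $\psi\in\hold^\infty_c(\R^n)$ with $\psi\equiv1$ on $B_2(0)$, I would set $v_\varepsilon\coloneqq\psi\,(u_h\star\eta_\varepsilon)$ for small $\varepsilon$. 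Then $v_\varepsilon\in\lebe^1(\R^n,V)$ with $\|v_\varepsilon\|_{\lebe^1}$ uniformly bounded, and $\A v_\varepsilon=\eta_\varepsilon w+\sum_{j\geq1}B_j(\D^j\psi,\D^{k-j}(u_h\star\eta_\varepsilon))$, where the commutator terms are supported in $\{\psi\neq1\}$, away from the singularity of $u_h$, and converge in $\hold^\infty_c$ as $\varepsilon\downarrow0$, so $\|\A v_\varepsilon\|_{\lebe^1}$ is uniformly bounded; hence $(v_\varepsilon)$ is bounded in $\sobo^{\A,1}(\R^n)$. On the other hand $v_\varepsilon=u_h\star\eta_\varepsilon$ near $0$, and since the average of $u_h$ over $B_\varepsilon(0)$ has a $\log\varepsilon$-singularity in the direction $\mathcal{L}w\neq0$, one gets $|v_\varepsilon(0)|\to\infty$, so $\|v_\varepsilon\|_{\lebe^\infty}\to\infty$; normalizing $v_\varepsilon$ in $\lebe^\infty$ then contradicts $\sobo^{\A,1}(\R^n)\hookrightarrow\hold_0(\R^n,V)$.

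I expect the main obstacle to be this last construction: one must localize and regularize the homogeneous solution $u_h$ of $\A u_h=\delta_0w$ so that $\|v_\varepsilon\|_{\sobo^{\A,1}}$ remains bounded while the logarithmic blow-up of $\|v_\varepsilon\|_{\lebe^\infty}$ carried by $\mathcal{L}w$ is isolated. The cutoff-commutator bookkeeping in $(1)\Rightarrow(3)$ is the other point requiring some care, but it is routine once one works with the mollified maps, all of whose derivatives lie in $\lebe^1$.
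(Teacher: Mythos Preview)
Your argument is correct. The paper takes a shorter route: rather than a cycle, it proves $(1)\Leftrightarrow(2)$ and $(1)\Leftrightarrow(3)$ separately, leaning on two black boxes. For $(1)\Rightarrow(2)$ and $(1)\Rightarrow(3)$ the paper invokes the density results of \cite[Thm.~2.8]{BDG}: since $\hold^\infty_c$ is \emph{norm}-dense in $\sobo^{\A,1}$, the $\hold^\infty_c$-inequality from Theorem~\ref{thm:main_infty} passes to all of $\sobo^{\A,1}$ and the range lands in the uniform closure $\hold_0$; since $\hold^\infty_c$ is $\A$-\emph{strictly} dense in $\bv^\A$, a one-line Fatou argument gives $(3)$. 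Your mollify-and-cutoff computations in $(1)\Rightarrow(3)$ and your Cauchy-in-$\lebe^\infty$ argument in $(3)\Rightarrow(2)$ reproduce these density facts by hand. For necessity, the paper says both $(2)\Rightarrow(1)$ and $(3)\Rightarrow(1)$ follow directly from Theorem~\ref{thm:main_infty}; this is immediate for $(3)$ (restrict to $\hold^\infty_c$), and for $(2)$ works via the scaling $u\mapsto u(\lambda\,\cdot)$, $\lambda\to\infty$, which kills the $\|u\|_{\lebe^1}$ term since $k=n$. Your explicit sequence $(v_\varepsilon)\subset\hold^\infty_c$ for $(2)\Rightarrow(1)$ is correct and in fact makes this step more transparent than the paper does---but note that it is essentially a reproof of the necessity half of Theorem~\ref{thm:main_infty}, so once that theorem is available the construction is redundant. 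In short: your approach trades dependence on \cite{BDG} for extra length; the paper's buys brevity by packaging the approximation into a reference.
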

\begin{proof}
Necessity of \eqref{eq:mistery_cond2} follows directly from Theorem~\ref{thm:main_infty}. Assume now that $\A$ satisfies \eqref{eq:mistery_cond2}, so Theorem~\ref{thm:main_infty} implies that
\begin{align}\label{eq:k=n}
\|u\|_{\lebe^\infty}\leq c\|\A u\|_{\lebe^1}
\end{align}
for all $u\in\hold^\infty_c(\R^n,V)$. Since test functions are norm--dense in $\sobo^{\A,1}(\R^n)$ \cite[Thm.~2.8]{BDG}, we have that $\sobo^{\A,1}(\R^n)$ embeds in the uniform closure of $\hold^\infty_c$, which is $\hold_0$, so \ref{it:sobo} is proved.

In the case of $\bv^\A$, test functions are not norm--, but strictly--dense (more precisely, $\A$--strictly dense \cite[Thm.~ 2.8]{BDG}) and it is easy to see that addition is not continuous in the strict topology on the space of bounded measures. In particular, we cannot prove that $\bv^\A$ embeds in $\hold_0$ (this is clearly visible if one looks at the indicator function of $(0,1)$ in dimensions $n=1$). Instead, one can prove \ref{it:bv}: Let $u_j\in\hold^\infty_c(\R^n)$ be such that $u_j\rightarrow u$ $\mathscr{L}^n$--a.e. and $|\A u_j|(\R^n)\rightarrow|\A u|(\R^n)$ (mollifications of $u$ satisfy this). 
It then follows that
\begin{align*}
\|u\|_{\lebe^\infty}\leq \liminf_{j\rightarrow\infty}\|u_j\|_{\lebe^\infty}\leq c\liminf_{j\rightarrow\infty}|\A u_j|(\R^n)=c|\A u|(\R^n).
\end{align*}
The proof of the equivalence is complete.
\end{proof}
We compare our result with \cite[Thm.~1.3]{PVS}, which states that if $n\geq 2$ and $u\in\sobo^{1,1}(\R^n)$ and $\partial_1\ldots\partial_nu\in\mathcal{M}(\R^n)$, then $u$ has a continuous representative. This strenghtens an earlier result, attributable to \textsc{Tartar} \cite[Thm.~1.2]{PVS}, which states that if $n\geq 2$, $D^nu\in\mathcal{M}$ and $u\in\sobo^{n-1,1}(\R^n)$, then $u$ has a continuous representative. One can speculate that, in our case, if $n\geq 2$, $u\in\sobo^{n-1,1}(\R^n)$ and $\A u\in\mathcal{M}$ for elliptic $\A$ of order $n$ that satisfies \eqref{eq:mistery_cond2}, then the embedding of $\bv^\A$ in $\lebe^\infty$ can be improved to continuity. This is not true, as we illustrate with an example below. In particular, it must be that the results in \cite{PVS} rely on a particular feature of the operator $\partial_1\ldots\partial_n$ on $\R^n$ (possibly, the availability of the Fundamental Theorem of Calculus in that case), rather than on a general self--improvement of the embedding of $\bv^\A$ in $\lebe^\infty$.
\begin{example}[{\cite[pp.18-19]{R_survey}}]
Let $\A=\Delta\circ(\di,\curl)$ on $\R^3$. Then $\A$ is elliptic (hence satisfies \eqref{eq:mistery_cond2}), non--canceling, and
\begin{align*}
\A u=\delta_0 e_1,\qquad\text{where}\qquad u(x)\coloneqq\dfrac{x}{|x|}.
\end{align*}
In particular, $u\in\bv^\A_{\locc}(\R^3)$ (so that $u\in\sobo^{2,1}_{\locc}(\R^3,\R^3)$).
\end{example}
\begin{proof}
All claims follow by direct computation, so that we can afford to present a streamlined proof. Writing $\B\coloneqq(\di,\curl)$, we note that $\B^*\circ\A=\Delta^2$, which has fundamental solution (proportional to) $|\cdot|$ in dimension $3$. Ellipticity of $\A$ follows by basic set theory as a composition of elliptic operators. Weak cancellation follows since all elliptic operators in odd dimensions are weakly canceling. Non--cancellation follows (with intersection equal to $\R e_1$) by direct computation; alternatively, one can compute that $\B(x/|x|^3)=\delta_0e_1$. We next write
\begin{align*}
u(x)=\B^*\circ\Delta^{-2}(\delta_0 e_1)=\B^*(|x|e_1)=D |x|=\dfrac{x}{|x|}.
\end{align*}
The Sobolev regularity of $u$ follows from Lemma~\ref{lem:ell=>sob}.
\end{proof}
In our setup ($\A$ elliptic of order $n$ on $\R^n$), the ideas of this paper are easily used to show that, in general, $\bv^\A_{\locc}(\R^n)\subset\hold(\R^n,V)$ implies that $\A$ is canceling. We do not know whether the converse is true, but speculate that this is the case.
\subsection{Characterization of condition~\eqref{eq:mistery_cond2}}\label{sec:char_mist_cond}
When proving that condition~\eqref{eq:mistery_cond2} is necessary for the embedding \eqref{eq:VS_k=n}, we in fact showed that, if \eqref{eq:mistery_cond2} fails for $w$, there exists $u\in\lebe^1_{\locc}(\R^n,V)$ such that $\A  u=\delta_0w$, but $D^{k-n}u$ is unbounded near zero. This property is actually equivalent with the failure of condition~\eqref{eq:mistery_cond2} for elliptic operators:
\begin{proposition}\label{prop:char_mist_cond}
Let $\A$ as in \eqref{eq:Ak} be elliptic, $k\geq n$. Then $\A$ satisfies condition~\eqref{eq:mistery_cond2} if and only if for $u\in\mathscr{S}'(\R^n,V)$, $w\in W\setminus\{0\}$ such that $\A  u=\delta_0w$, we have that $D^{k-n}u\in\lebe^\infty_{\locc}$.
\end{proposition}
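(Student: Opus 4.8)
The forward direction is precisely what was already extracted from the necessity argument for Theorem~\ref{thm:main_infty}: if \eqref{eq:mistery_cond2} fails for some $w\neq0$, then by Lemma~\ref{lem:canc_char} there is $u_h=K^\A w\in\lebe^1_{\locc}(\R^n,V)$ solving $\A u_h=\delta_0 w$, and by the representation \eqref{eq:conv_ker} we have $D^{k-n}u_h=H_0 w+\log|\cdot|\,\mathcal{L}w$ with $H_0$ bounded and $\mathcal{L}w\neq 0$, so $D^{k-n}u_h\notin\lebe^\infty_{\locc}$ (it blows up logarithmically near the origin). Contrapositively, if every solution $u\in\mathscr{S}'$ of $\A u=\delta_0 w$ (for $w\neq0$) has $D^{k-n}u\in\lebe^\infty_{\locc}$, then \eqref{eq:mistery_cond2} must hold. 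So the plan is: assume \eqref{eq:mistery_cond2} and show that \emph{any} tempered distributional solution of $\A u=\delta_0 w$ satisfies $D^{k-n}u\in\lebe^\infty_{\locc}$.

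The key point is a uniqueness-modulo-polynomials statement. Given two solutions $u_1,u_2\in\mathscr{S}'$ of $\A u=\delta_0 w$, their difference $v:=u_1-u_2$ satisfies $\A v=0$ in $\mathscr{D}'(\R^n,V)$, hence $\A[\xi]\hat v(\xi)=0$; by ellipticity $\A[\xi]$ is injective for $\xi\neq0$, so $\hat v$ is supported at the origin and therefore $v$ is a polynomial. Consequently $D^{k-n}v$ is also a polynomial, so it lies in $\lebe^\infty_{\locc}$ automatically. Thus the property "$D^{k-n}u\in\lebe^\infty_{\locc}$" holds for \emph{all} solutions as soon as it holds for \emph{one} solution. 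Now, under \eqref{eq:mistery_cond2}, the distinguished solution $u_h=K^\A w$ from Lemma~\ref{lem:canc_char} has, by \eqref{eq:conv_ker}, $D^{k-n}u_h=H_0w+\log|\cdot|\,\mathcal{L}w=H_0 w$, which is essentially bounded. (One must first check that $\A(K^\A w)=\delta_0 w$ genuinely holds in $\mathscr{D}'$; this is exactly the computation $\widehat{\A u_h}(\xi)=\A[\xi]\A^\dagger[\xi]w=w$ for $\xi\neq0$, valid because $w\in\bigcap_\xi\mathrm{im\,}\A[\xi]$, carried out in the proof of Lemma~\ref{lem:canc_char}.) Combining, every $\mathscr{S}'$-solution $u$ of $\A u=\delta_0w$ differs from $u_h$ by a polynomial, hence $D^{k-n}u=D^{k-n}u_h+(\text{polynomial})\in\lebe^\infty_{\locc}$, proving the reverse implication.

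A small technical wrinkle: the formula \eqref{eq:conv_ker} is stated for $\check L$, i.e.\ for the kernel representing $D^{k-n}$ applied to convolution against $\A u$ on test functions; to apply it to $u_h=K^\A w$ one either invokes directly the structure of $D^{k-n}K^\A$ from \cite[Thm.~7.1.20]{HGOD1} as in the proof of \eqref{eq:conv_ker}, or notes $\widehat{D^{k-n}u_h}(\xi)=L[\xi]w=\A^\dagger[\xi]w\otimes^{k-n}\xi$ and reads off the homogeneous-plus-logarithmic decomposition of its inverse transform, the logarithmic term being $\log|\cdot|\int_{\mathbb{S}^{n-1}}L[\xi]w\,\dif\mathscr{H}^{n-1}(\xi)=\log|\cdot|\,\mathcal{L}w$, which vanishes by \eqref{eq:mistery_cond2}. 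The main obstacle is really just bookkeeping: making the passage from formal Fourier identities to genuine equalities of tempered distributions (extension of homogeneous distributions, \cite[Thm.~3.2.3--4, 7.1.18]{HGOD1}) precise, exactly as in the proofs of Lemma~\ref{lem:canc_char} and of \eqref{eq:conv_ker}; there is no new analytic difficulty beyond those already handled.
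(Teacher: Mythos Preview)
Your proposal is correct and follows essentially the same route as the paper's proof: one direction is the necessity argument for Theorem~\ref{thm:main_infty}; for the other, you construct the distinguished solution $u_h=K^\A w$, observe that under \eqref{eq:mistery_cond2} its $(k-n)$--th derivative is bounded (the log term in \eqref{eq:conv_ker} drops out), and conclude since any other tempered solution differs from $u_h$ by a polynomial. The only cosmetic difference is that the paper re-derives the boundedness of $D^{k-n}u_h$ by appealing directly to H\"ormander's \cite[Thm.~3.2.4,~7.1.16,~7.1.18]{HGOD1} (the vanishing integral in \eqref{eq:mistery_cond2} is exactly the obstruction in \cite[Thm.~3.2.4]{HGOD1} to extending the $(-n)$--homogeneous symbol as a genuinely homogeneous distribution), whereas you invoke the already--established formula \eqref{eq:conv_ker}; and the paper phrases the uniqueness step as ``differs by a smooth (analytic) map'' via elliptic regularity rather than ``polynomial'' via the Fourier support argument---your version is in fact sharper and equally valid.
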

\begin{proof}
We need only prove one implication. Suppose that condition~\eqref{eq:mistery_cond2} holds and let $0\neq w\in W$ (if any exist) be such that $\A  u=\delta_0w$. We consider the map 
\begin{align*}
f(\xi)\coloneqq\A^\dagger[\xi]w\otimes^{k-n}\xi,
\end{align*}
so $f$ is smooth (rational) away from zero and $(-n)$--homogeneous, hence a distribution in $\R^n\setminus\{0\}$. By \cite[Thm.~3.2.4]{HGOD1} and condition~\eqref{eq:mistery_cond2}, we have that $f$ defines a $(-n)$--homogeneous distribution in $\R^n$. By \cite[Thm.~7.1.18]{HGOD1}, we have that $f\in\mathscr{S}^\prime$. By \cite[Thm.~7.1.16]{HGOD1}, we have that $\check{f}$ is $0$--homogeneous and smooth away from zero. It is easy to see that then $\check{f}=D^{k-n}v$, where $v\in\mathscr{S}^\prime$ is smooth away from zero and
\begin{align*}
\widehat{v}(\xi)=\A^\dagger[\xi]w,
\end{align*}
so that $\A  v=\delta_0w$. So $u-v$ is $\A $--free, hence, by ellipticity, $u$ differs from $v$ by a smooth map (analytic, even). Since $D^{k-n}v=\check{f}$ is bounded, it follows that $D^{k-n}u$ is locally bounded.
\end{proof}
It seems relevant to compare the result of Proposition~\ref{prop:char_mist_cond} to the corresponding result for \textsc{Van Schaftingen}'s embeddings \eqref{eq:VS_j}, which was essentially already covered in Section~\ref{sec:EC}. We give another, very streamlined, proof.
\begin{proposition}[{\cite[Prop.~5.5]{VS}}]\label{prop:VS_canc}
Let $\A $ as in \eqref{eq:Ak} be elliptic, $1\leq j\leq\min\{k,n-1\}$. If there exists $u\in\mathscr{S}'(\R^n,V)$, $w\in W\setminus\{0\}$ such that $\A  u=\delta_0w$, then $D^{k-j}u\notin\lebe^{n/(n-j)}_{\locc}$.
\end{proposition}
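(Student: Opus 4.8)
The plan is to argue entirely on the Fourier side, in the same spirit as the proofs of Lemma~\ref{lem:canc_char} and Proposition~\ref{prop:char_mist_cond}: I would identify $D^{k-j}u$, up to a harmless polynomial, with an explicit positively homogeneous function and then read off its failure to be locally $\lebe^{n/(n-j)}$ directly from its degree of homogeneity.

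First I would record the Fourier identity. Taking the Fourier transform of $\A u = \delta_0 w$ in $\mathscr{S}'$ gives $\A[\xi]\hat u(\xi) = w$ for $\xi \neq 0$. Multiplying on the left by $\A^\dagger[\xi]$ and using $\A^\dagger[\xi]\A[\xi] = \id_V$ (which holds by ellipticity), one obtains $\hat u(\xi) = \A^\dagger[\xi]w$ on $\R^n\setminus\{0\}$; substituting this back into $\A[\xi]\hat u(\xi)=w$ yields $\A[\xi]\A^\dagger[\xi]w = w$, and hence $\A^\dagger[\xi]w \neq 0$ for every $\xi \neq 0$, since $w \neq 0$.

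Next I would pass to $D^{k-j}u$. On $\R^n\setminus\{0\}$ its Fourier transform equals the smooth $(-j)$--homogeneous function $g(\xi) \coloneqq \A^\dagger[\xi]w\otimes^{k-j}\xi$, which — because $0 < j < n$ — is locally integrable on all of $\R^n$ and therefore defines a tempered distribution. Consequently $\widehat{D^{k-j}u} - g$ is a tempered distribution supported at $\{0\}$, i.e.\ a finite linear combination of derivatives of $\delta_0$, so inverting the Fourier transform gives $D^{k-j}u = \check g + Q$ with $Q$ a polynomial. By \cite[Thm.~7.1.16,~7.1.18]{HGOD1}, $\check g$ is smooth away from $0$ and positively homogeneous of degree $j-n$, so $\check g(x) = |x|^{j-n}h(x/|x|)$ for some $h \in \hold^\infty(\mathbb{S}^{n-1}, V\odot^{k-j}\R^n)$; moreover $h \not\equiv 0$, because $g \not\equiv 0$ by the previous step. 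Since $(j-n)\tfrac{n}{n-j} + (n-1) = -1$, passing to polar coordinates gives
\begin{align*}
\int_{\ball_1(0)}|\check g|^{\frac{n}{n-j}}\dif x = \left(\int_{\mathbb{S}^{n-1}}|h|^{\frac{n}{n-j}}\dif\mathscr{H}^{n-1}\right)\int_0^1\frac{\dif r}{r} = \infty,
\end{align*}
the spherical factor being strictly positive by continuity of $h$ and $h\not\equiv0$; as $Q \in \lebe^\infty_{\locc}$, it follows that $D^{k-j}u = \check g + Q \notin \lebe^{n/(n-j)}_{\locc}$.

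The only step needing a little care is the bookkeeping in the reduction — checking that $\widehat{D^{k-j}u}$ and $g$ genuinely agree away from the origin (so that their difference is concentrated at $0$) and that the resulting polynomial correction $Q$ is irrelevant to local $\lebe^{n/(n-j)}$--membership. Once this is in place, the statement is just the elementary scaling computation above; I do not anticipate any substantive obstacle.
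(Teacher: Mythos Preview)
Your proof is correct and follows essentially the same approach as the paper's: both identify $D^{k-j}u$, up to a harmless correction, with the inverse Fourier transform of the $(-j)$--homogeneous function $\A^\dagger[\xi]w\otimes^{k-j}\xi$, and then read off the failure of local $\lebe^{n/(n-j)}$--integrability from the $(j-n)$--homogeneity of that inverse transform. The only cosmetic difference is that the paper corrects by a \emph{smooth} map (constructing $v$ with $\widehat v=\A^\dagger[\cdot]w$ and invoking ellipticity to conclude $u-v$ is smooth from $\A(u-v)=0$), whereas you correct by a \emph{polynomial} (observing directly that $\widehat{D^{k-j}u}-g$ is supported at the origin); your explicit polar--coordinate computation also spells out what the paper leaves as the one--line assertion ``either $\check f\equiv0$, or $\check f\notin\lebe^{n/(n-j)}_{\locc}$''.
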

\begin{proof}
As above, we define $f(\xi)\coloneqq\A^\dagger[\xi]w\otimes^{k-j}\xi$, which is $(-j)$--homogeneous, so we can apply \cite[Thm.~3.2.3, 7.1.16, 7.1.18]{HGOD1}\footnote{It is important to mention that \cite[Thm.~3.2.4]{HGOD1}, which we used to prove Proposition~\ref{prop:char_mist_cond}, is essentially the degenerate version of \cite[Thm.~3.2.3]{HGOD1}, which we use here. In a nutshell, this is the basic difference between the canceling condition and condition~\eqref{eq:mistery_cond2}.} to show that $\check{f}\in\mathscr{S}'$ is $(j-n)$--homogeneous, so either $\check{f}\equiv0$, or $\check{f}\notin\lebe^{n/(n-j)}_{\locc}$. As above, $\check{f}=D^{k-j}v$, where $u-v$ is $\A $--free, hence a smooth map. Also $f\equiv0$ implies $w=0$, so the conclusion follows.
\end{proof}
Although for $0\leq j\leq n\leq k$ the inequalities with $u\in\hold^\infty_c(\R^n,V)$
\begin{align}\label{eq:j=0...n}
\|D^{k-j}u\|_{\lebe^{n/(n-j)}}\lesssim\|\A u\|_{\lebe^1}
\end{align}
rely on different conditions for $1\leq j\leq n-1$ and $j=n$, Propositions~\ref{prop:char_mist_cond} and \ref{prop:VS_canc} reveal a phenomenological similarity: if the ``worst'' measures from the point of view of convolution with $D^{k-j}\left(\mathscr{F}^{-1}\A^\dagger[\cdot]\right)$ 
do not lie in 
\begin{align*}
\{\A  u\text{ is a bounded measure}\},
\end{align*}
then the embeddings hold. This is in sharp contrast with the case $j=0$, when \eqref{eq:j=0...n} holds only for trivial $\A $ by Ornstein's Non--inequality.

Also, if $\A$ is elliptic, satisfies condition~\eqref{eq:mistery_cond2}, but is not canceling (see Section~\ref{sec:examples_appdiff} for examples), then the embedding \eqref{eq:j=0...n} holds \emph{only} for $j=n$. There exists a single $\bv^\A_{\locc}$--map $u$ such that $D^{k-j}u\notin\lebe^{n/(n-j)}_{\locc}$ for $j=0,\ldots, n-1$, but $D^{k-n}u\in\lebe^\infty$.

Before moving on to examples, we use Proposition~\ref{prop:char_mist_cond} to write out Theorem~\ref{thm:main_infty} in an invariant form, independent of the assumption made in the Introduction that $\R^n$, $V$, $W$ have Euclidean structure:
\begin{remark}\label{rk:invariance}
Assume that $\R^n$, $V$, $W$ are (viewed as) normed finite dimensional vector spaces, i.e., no inner product is defined a priori. We claim that, for an elliptic operator $\A $ of order $k\geq n$, we have that
\begin{align*}
\|D^{k-n}u\|_{\lebe^\infty(\R^n,V\odot^{k-n}\R^n)}\lesssim \|\A u\|_{\lebe^1(\R^n,W)}
\end{align*}
if and only if
\begin{align}\label{eq:inv_cond}
\A u=\delta_0w\implies D^{k-n}u\in\lebe^\infty_{\locc}(\R^n,V\odot^{k-1}\R^n).
\end{align}
for $u\in\mathscr{S}^\prime(\R^n,V)$, $w\in W\setminus\{0\}$.
\end{remark}
\begin{proof}
We choose bases of  $\R^n$, $V$, $W$. Defining inner products respectively such that these bases are orthonormal, we obtain new, equivalent norms and an equivalent Lebesgue measure on $\R^n$.

By the argument used to prove Proposition~\ref{prop:char_mist_cond}, we have that \eqref{eq:inv_cond} is represented in the new coordinates as condition~\eqref{eq:mistery_cond2}. By the proof of Theorem~\ref{thm:main_infty} in Section~\ref{sec:proof_infty}, we have that the inequality holds, with respect to the new norms and Lebesgue measure. By finite dimensionality of $\R^n$, $V$, $W$, the inequality follows for the initial norms and Lebesgue measure.
\end{proof}

\subsection{Examples}\label{sec:examples_appdiff}
In this section, we will show by way of example that condition~\eqref{eq:mistery_cond2} is strictly weaker than the canceling condition for elliptic operators. Throughout, with a slight abuse of notation, $\{e_j\}_j$ will denote the standard Euclidean basis in any finite dimensional space we may consider.

As noted in the Introduction, condition~\eqref{eq:mistery_cond2} is automatically satisfied in odd dimensions for elliptic operators. We have:
\begin{example} Let $n=2d+1$. Then the operator $\A \coloneqq\Delta^d\circ(\di,\curl)$ on $\R^n$ from $\R^n$ to $\R\times\R^{n\times n}_{\asym}$ is elliptic and non--canceling.
\end{example}
\begin{proof}
We write $\mathbb{B}\coloneqq(\di,\curl)$. An elementary computation shows that, for non--zero $\xi$, $\curl[\xi]v=0$ implies that $v=\alpha\xi$ for some $\alpha\in\R$. If also $0=\di[\xi]v=\alpha|\xi|^2$, then $v=0$, so $\mathbb{B}$ is elliptic. By set theory, so is $\A $. To see that $\A $ is non--canceling, note from the previous calculation that $\A[\xi]\xi=|\xi|^2e_1$.
\end{proof}

It may be that \eqref{eq:VS_k=n} holds for all elliptic operators. This is not the case by:
\begin{example} Let $n=2d$. Then the operator $\A \coloneqq\Delta^d$ on $\R^n$ from $\R$ to $\R$ is elliptic and fails condition~\eqref{eq:mistery_cond2}.
\end{example}
\begin{proof}
Ellipticity is obvious. Failure of condition~\eqref{eq:mistery_cond2} is equivalent to
\begin{align*}
\int_{\mathbb{S}^{n-1}}\dfrac{1}{|\xi|^{n}}\dif\mathscr{H}^{n-1}(\xi)\neq0,
\end{align*}
which is clearly true.
\end{proof}

On the other hand, it may be that~\eqref{eq:mistery_cond2} is not weaker than the canceling condition in even dimensions. This is also not the case by:
\begin{example}\label{ex:simple}
The operator
\begin{align}\label{eq:example}
\A_1u\coloneqq
\left(
\begin{matrix}
(\partial_1^2-\partial^2_2)u_1+2\partial_1\partial_2u_2\\
-2\partial_1\partial_2u_1+(\partial_1^2-\partial^2_2)u_2
\end{matrix}
\right),
\end{align}
on $\R^2$ from $\R^2$ to $\R^2$ is elliptic, non--canceling, and satisfies condition~\eqref{eq:mistery_cond2}.
\end{example}
\begin{proof}
Ellipticity is in this case, equivalent to invertibility for $\xi\neq0$ of the matrix
\begin{align*}
\A_1[\xi]=
\left(
\begin{matrix}
\xi_1^2-\xi^2_2&2\xi_1\xi_2\\
-2\xi_1\xi_2&\xi_1^2-\xi^2_2
\end{matrix}
\right),
\end{align*}
which has determinant equal to $|\xi|^4$. Non--cancellation follows from ellipticity. To check condition~\eqref{eq:mistery_cond2}, we have that
\begin{align*}
\A_1^\dagger[\xi]=\A_1^{-1}[\xi]=\dfrac{1}{|\xi|^4}
\left(
\begin{matrix}
\xi_1^2-\xi^2_2&-2\xi_1\xi_2\\
2\xi_1\xi_2&\xi_1^2-\xi^2_2
\end{matrix}
\right),
\end{align*}
so that one computes
\begin{align*}
\int_{\mathbb{S}^1}\xi_1^2-\xi_2^2\dif\mathscr{H}^1(\xi)=\int_0^{2\pi}\cos(2\theta)\dif\theta=0,\,\int_{\mathbb{S}^1}2\xi_1\xi_2\dif\mathscr{H}^1(\xi)=\int_0^{2\pi}\sin(2\theta)\dif\theta=0
\end{align*}
to conclude the proof.
\end{proof}
So far, above or in \cite{BVS}, we have only displayed examples of $\A $ for which the inequality~\eqref{eq:VS_k=n} holds which are either canceling or for which $\mathcal{L}\equiv0$. The following example shows that condition~\eqref{eq:mistery_cond2} cannot be simplified easily, as the interaction between $\mathcal{L}$ and the intersection of images can occur in a non--trivial way. We augment the previous example as follows:
\begin{example}
The operator
\begin{align*}
\A_2
\left(
\begin{matrix}
u_1\\u_2\\u_3
\end{matrix}
\right)
\coloneqq
\left(
\begin{matrix}
\A_1
\left(
\begin{matrix}
u_1\\
u_2
\end{matrix}
\right)
\\
\partial^2_1u_3\\
\sqrt{2}\partial_1\partial_2u_3\\
\partial^2_2u_3
\end{matrix}
\right)
\end{align*}
on $\R^2$ from $\R^3$ to $\R^5$ is elliptic, non--canceling, and satisfies condition~\eqref{eq:mistery_cond2} with $\mathcal{L}\not\equiv0$. Here $\A_1$ is as in \eqref{eq:example}.
\end{example}
\begin{proof}
Consider, more generally, an operator
\begin{align*}
\A[\xi]\coloneqq\left(
\begin{matrix}
\mathbb{B}_1[\xi]&0\\
0&\mathbb{B}_2[\xi]
\end{matrix}
\right),
\end{align*}
where $\mathbb{B}_1$ is elliptic and square on $\R^n$ from $V_1$ to $V_1$ and $\mathbb{B}_2$ is elliptic and canceling on $\R^n$ from $V_2$ to $W_2$. By matrix multiplication
\begin{align}\label{eq:square_matrix}
\A^*[\xi]\A[\xi]=\left(
\begin{matrix}
\mathbb{B}_1^*[\xi]\mathbb{B}_1[\xi]&0\\
0&\mathbb{B}^*_2[\xi]\mathbb{B}_2[\xi]
\end{matrix}
\right),
\end{align}
so that $\det(\A^*[\xi]\A[\xi])=\det(\mathbb{B}_1^*[\xi]\mathbb{B}_1[\xi])\det(\mathbb{B}_2^*[\xi]\mathbb{B}_2[\xi])\neq0$ for $\xi\neq0$. In particular, $\A $ is elliptic. We also have that
\begin{align*}
\A[\xi]
\left(
\begin{matrix}
\mathbb{B}_1^{-1}[\xi]v_1\\
0
\end{matrix}
\right)
=
\left(
\begin{matrix}
v_1\\
0
\end{matrix}
\right)
\end{align*}
for $0\neq\xi\in\R^n$, $0\neq v_1\in V_1$, so $\A $ is non--canceling.

For later purposes we also compute $\bigcap_{\xi\in\mathbb{S}^{n-1}}\mathrm{im\,}\A[\xi]$, which, by the above computation contains $V_1\times\{0\}$. It is easy to see that equality holds: let
\begin{align*}
\left(
\begin{matrix}
v_1\\
v_2
\end{matrix}
\right)
\in\bigcap_{\xi\in\mathbb{S}^{n-1}}\mathrm{im\,}\A[\xi],
\end{align*}
so that, again by matrix multiplication, $v_2\in\bigcap_{\xi\in\mathbb{S}^{n-1}}\mathrm{im\,}\mathbb{B}_2[\xi]=\{0\}$.

We now choose $\mathbb{B}_1\coloneqq\A_1$ and
\begin{align*}
\mathbb{B}_2\coloneqq
\left(
\begin{matrix}
\partial^2_1\\
\sqrt{2}\partial_1\partial_2\\
\partial^2_2
\end{matrix}
\right)
\end{align*}
on $\R^2$ from $\R$ to $\R^3$. The operator $\A_2$ thus defined is elliptic and non--canceling. 

We next check that condition~\eqref{eq:mistery_cond2} holds, to which end we record that 
\begin{align*}
J\coloneqq\bigcap_{\xi\in\mathbb{S}^{n-1}}\mathrm{im\,}\A[\xi]=\R^2\times\{0\}
\end{align*}
from the above considerations. By \eqref{eq:square_matrix}, we have that $\A^*[\xi]\A[\xi]=|\xi|^4\id$, so that $\A^\dagger[\xi]=|\xi|^{-4}\A^*[\xi]$. To check that $\mathcal{L}(J)=\{0\}$ is thus equivalent to
\begin{align*}
\int_{\mathbb{S}^1}
\left(
\begin{matrix}
\xi_1^2-\xi^2_2&-2\xi_1\xi_2\\
2\xi_1\xi_2&\xi_1^2-\xi^2_2
\end{matrix}
\right)
\dif\mathscr{H}^1(\xi)=0,
\end{align*}
which we know to be true from Example~\ref{ex:simple}.

To check that $\R^{3\times5}\ni\mathcal{L}\not\equiv0$, we look at
\begin{align*}
\mathcal{L}_{33}=\int_{\mathbb{S}^1}\dfrac{\xi_1^2}{|\xi|^4}\dif\mathscr{H}^1(\xi)>0.
\end{align*}
The proof is complete.
\end{proof}
\section{Proof of Theorem~\ref{thm:main_k=1}}\label{sec:k=1}

For clarity of exposition, we present the proofs for the order $k=1$ case separately. We begin by showing sufficiency of EC for the Lorentz--differentiability. The outline of the following Lemma is that one can use the Sobolev--type embedding \cite[Thm.~8.5]{VS} (i.e., \eqref{eq:VS_appdiff} for Lorentz spaces) to boost the main result \cite[Thm.~3.4]{ABC}, which gives $\lebe^1$--differentiability $\mathscr{L}^n$--a.e. of maps in $\bv^\A$ for elliptic $\A$.
\begin{lemma}\label{lem:suff_EC}
Let $n>1$, $\A$ as in \eqref{eq:A} be EC, and $1<q<\infty$. Then all maps in $\bv^\A_{\locc}$ are $\lebe^{n/(n-1),q}$--differentiable $\mathscr{L}^n$--a.e.
\end{lemma}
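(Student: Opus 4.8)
The plan is to upgrade the $\lebe^1$–differentiability statement from \cite[Thm.~3.4]{ABC} to the Lorentz–differentiability by inserting the Lorentz–Sobolev inequality \cite[Thm.~8.5]{VS} at the scaling step, in the spirit of the proof of \cite[Thm.~1.1]{GR2}. Concretely, given $u\in\bv^\A_{\locc}$, Lemma~\ref{lem:algebra_Au} (for $k=1$) already provides, for $\mathscr{L}^n$–a.e. $x$, an approximate gradient $\nabla u(x)$ with $A(\nabla u(x))=\A^{ac}u(x)$, and the $\lebe^1$–differentiability
\begin{align*}
\fint_{\ball_r(x)}|u-P^1_xu|\dif y=o(r)\quad\text{as }r\downarrow0,
\end{align*}
where $P^1_xu(y)=u(x)+\nabla u(x)(y-x)$. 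The task is to promote the left-hand side (with exponent $1$) to the $\lebe^{n/(n-1),q}$–average $r^{-n}\|R^1_xu\|_{\lebe^{n/(n-1),q}(\ball_r(x))}=o(1)$.

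First I would localize and rescale: fix a good point $x$ and set $v_r(z)\coloneqq r^{-1}\big(u(x+rz)-P^1_xu(x+rz)\big)=r^{-1}R^1_xu(x+rz)$ on the unit ball $\ball_1(0)$, so that $\A v_r(z)=\A^{ac}u(x+rz)-\A^{ac}u(x)$ as measures (the polynomial part is killed because $\A$ is first order and $A(\nabla u(x))=\A^{ac}u(x)$; one must be careful with the singular part $\A^su$, but at a point $x$ where $|\A^su|(\ball_r(x))=o(r^n)$—which is $\mathscr{L}^n$–a.e. $x$ by differentiation of measures—its contribution is controlled). Then I would subtract a suitable $\A$–free correction (an "average" rigid-type field) so that the Poincaré–type companion of \eqref{eq:VS_appdiff} applies; this is the standard device used to turn a homogeneous Sobolev inequality on $\hold^\infty_c$ into a local Poincaré inequality on balls via a finite-dimensionality/compactness argument for the nullspace. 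The Lorentz refinement \cite[Thm.~8.5]{VS} gives, after this reduction,
\begin{align*}
\|v_r-\Pi v_r\|_{\lebe^{n/(n-1),q}(\ball_1)}\lesssim |\A v_r|(\ball_1)+\text{(lower order)},
\end{align*}
where $\Pi v_r$ is the projection onto the finite-dimensional nullspace. Undoing the scaling, $r^{-n}\|R^1_xu\|_{\lebe^{n/(n-1),q}(\ball_r(x))}$ is bounded by $r^{-n}|\A u-A(\nabla u(x))\mathscr{L}^n|(\ball_r(x))+o(1)$, and both terms tend to $0$ as $r\downarrow0$ at a Lebesgue point of $\A^{ac}u$ which is also a point of negligible singular part. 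One subtlety: the projection term $\Pi v_r$ must itself be shown to vanish in the limit; since $v_r\to0$ in $\lebe^1(\ball_1)$ (that is exactly $\lebe^1$–differentiability) and $\Pi$ is continuous on the finite-dimensional space in any norm, $\Pi v_r\to0$ in $\lebe^{n/(n-1),q}$ as well.

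The main obstacle is the passage from the global inequality \eqref{eq:VS_appdiff} on $\hold^\infty_c(\R^n,V)$—equivalently its Lorentz version—to a genuinely local Poincaré inequality with the finite-dimensional nullspace quotient: one needs that for EC operators the relevant nullspace restricted to a ball is finite dimensional and that the quotient inequality holds with a uniform constant. Here ellipticity guarantees, by Lemma~\ref{lem:ell=>sob}, that $\bv^\A$–maps are locally Sobolev, and a Rellich–type compactness argument then yields the quotient inequality; the EC hypothesis enters only through \eqref{eq:VS_appdiff}/\cite[Thm.~8.5]{VS}. A second, more technical point is the handling of $\A^su$: one should restrict attention to the $\mathscr{L}^n$–full set of points $x$ that are simultaneously Lebesgue points of $\A^{ac}u$, points of $\lebe^1$–differentiability of $u$ (from \cite[Thm.~3.4]{ABC}), and points where $|\A^su|(\ball_r(x))/r^n\to0$; on this set the argument above goes through verbatim. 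I would also remark that the same scheme, using the endpoint of \cite[Thm.~8.5]{VS}, cannot reach $q=1$ for general $\A$, consistent with \cite[Open Prob.~8.3]{VS}, but does reach every $q>1$, giving the stated conclusion.
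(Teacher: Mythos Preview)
Your overall strategy---use Lemma~\ref{lem:algebra_Au} for the $\lebe^1$--baseline, then upgrade via \cite[Thm.~8.5]{VS}---is correct in spirit, but the mechanism you propose for localizing the Sobolev inequality has a genuine gap. You write that one should ``subtract a suitable $\A$--free correction'' and that ``a Rellich--type compactness argument then yields the quotient inequality,'' with the projection $\Pi v_r$ controlled because ``$\Pi$ is continuous on the finite--dimensional space in any norm.'' But the hypothesis is only EC, not FDN, and the paper explicitly records (see the Introduction and \cite[Sec.~3]{GR}) that EC is \emph{strictly weaker} than FDN. For EC operators that are not FDN, the nullspace of $\A$ on a ball is infinite--dimensional, so neither the compactness argument for the quotient Poincar\'e inequality nor the norm--equivalence argument for $\Pi v_r\to0$ is available. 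This is precisely the obstruction the paper is designed to bypass; its own remark that ``all other proofs of $\lebe^p$--differentiability \ldots using Poincar\'e(-type) inequalities or Sobolev(-type) embeddings \ldots cannot easily be adjusted to rely on \eqref{eq:VS_appdiff} only'' is aimed at exactly the route you sketch.

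The paper's fix is short and avoids any projection: multiply by a cut--off $\rho\in\hold^\infty_c(\ball_{2r}(x))$ with $\rho=1$ on $\ball_r(x)$ and $|\nabla\rho|\lesssim r^{-1}$, apply \cite[Thm.~8.5]{VS} to the compactly supported test function $\rho v_j$ (with $v_j$ mollifications of $v$), and use the Leibniz rule to get
\begin{align*}
\|v\|_{\lebe^{n/(n-1),q}(\ball_r(x))}\lesssim|\A v|\big(\overline{\ball_{2r}(x)}\big)+r^{-1}\|v\|_{\lebe^1(\ball_{2r}(x))}
\end{align*}
for all $v\in\bv^\A_{\locc}$, after Fatou and strict convergence. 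Then plug in $v=R^1_xu$: the first term is $o(r^n)$ by Lebesgue differentiation combined with \eqref{eq:algebra_Au}, and the second is $o(r^n)$ by $\lebe^1$--differentiability. No nullspace, no compactness, no finite--dimensionality---the lower--order error term $r^{-1}\|v\|_{\lebe^1}$ absorbs exactly what your projection $\Pi$ was meant to handle.
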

\begin{proof}
We begin by deriving the estimate 
\begin{align}\label{eq:key_est}
\|v\|_{\lebe^{n/(n-1),q}(\ball_r(x))}\lesssim|\A v|\left(\overline{\ball_{2r}(x)}\right)+r^{-1}\|v\|_{\lebe^1(\ball_{2r}(x))}
\end{align}
for all $v\in\bv^\A_{\locc}$. Consider a sequence $v_j$ of mollifications of $v$ (which converge $\A$--strictly to $v$ in $\bv^\A_{\locc}$ \cite[Thm.~2.8]{BDG}) and a positive cut-off function $\rho\in\hold^\infty_c(\ball_{2r}(x))$ such that $\rho=1$ in $\ball_r(x)$ and $|\nabla\rho|\lesssim r^{-1}$. By Fatou's Lemma for Lorentz spaces and \cite[Thm.~8.5]{VS} we have:
\begin{align*}
\text{LHS}&\leq\liminf_j\|\rho v_j\|_{\lebe^{n/(n-1),q}(\ball_{2r}(x))}\\
&\lesssim\liminf_j\|\A(\rho v_j)\|_{\lebe^1(\ball_{2r}(x))}\\
&\lesssim\liminf_j\left(\|\rho\A v_j\|_{\lebe^1(\ball_{2r}(x))}+r^{-1}\| v_j\|_{\lebe^1(\ball_{2r}(x))}\right)\\
&\leq\liminf_j\left(\|\A v_j\|_{\lebe^1(\ball_{2r}(x))}+r^{-1}\| v_j\|_{\lebe^1(\ball_{2r}(x))}\right)\\
&=\text{RHS}.
\end{align*}
We then fix $u\in\bv^\A_{\locc}$ and apply \eqref{eq:key_est} to $v\coloneqq R^1_x u\in\bv^\A_{\locc}$. Recall that $R^1_x u(y)=u(y)-u(x)-\nabla u(x)(y-x)$, where $x$ is chosen as a Lebesgue point of $\A u$ and a point of $\lebe^1$--differentiability of $u$, with approximate gradient $\nabla u(x)$ (such points exist $\mathscr{L}^n$--a.e. by Lemma~\ref{lem:algebra_Au} for $k=1$). We obtain
\begin{align*}
\|R^1_xu\|_{\lebe^{n/(n-1),q}(\ball_r(x))}\lesssim|\A R^1_x u|\left(\overline{\ball_{2r}(x)}\right)+r^{-1}\| R^1_x u\|_{\lebe^1(\ball_{2r}(x))}.
\end{align*}
The second term equals $o(r^n)$ as $r\downarrow0$ by $\lebe^1$--differentiability of $u$ at $x$. The first term equals $|\A u-A(\nabla u(x))|(\ball_{2r}(x))=o(r^n)$ as $r\downarrow0$ by Lebesgue differentiation for Radon measures and \eqref{eq:algebra_Au}. The proof is complete.
\end{proof}

\begin{lemma}\label{lem:nec_ell_appdiff}
Let $\A$ be as in \eqref{eq:A}, $1<p<\infty$, $1\leq q\leq\infty$. Suppose that all maps in $\bv^\A_{\locc}$ are $\lebe^{p,q}$--differentiable $\mathscr{L}^n$--a.e. Then $\A$ is elliptic.
\end{lemma}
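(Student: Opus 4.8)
\textit{Proof proposal.} The plan is to prove the contrapositive: assuming $\A$ is \emph{not} elliptic, I construct a single $u\in\bv^\A_{\locc}(\R^n)$ which is not $\lebe^{p,q}$-differentiable at any point of $\R^n$, contradicting the hypothesis.

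Non-ellipticity of $\A$ means that there are $\xi_0\in\mathbb S^{n-1}$ and $v_0\in V\setminus\{0\}$ with $\A[\xi_0]v_0=A(v_0\otimes\xi_0)=0$. After an orthogonal change of variables on $\R^n$ — which preserves balls, Lebesgue measure, Lorentz quasinorms and the degree of polynomials, and replaces $\A$ by another operator of the form \eqref{eq:A} with the same (non-)ellipticity — I may assume $\xi_0=e_1$. I then fix a bounded measurable function $g\colon\R\to\R$ that is nowhere approximately differentiable; such functions are classical (cf.\ \cite[Ch.~6]{EG} for background on approximate differentiability), and since first-order $\lebe^1$-differentiability in the sense of Calderón–Zygmund implies approximate differentiability, $g\notin\taylor^{1,1}(t)$ for every $t\in\R$. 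Setting $u(x):=v_0\,g(x_1)$, one has $u\in\lebe^\infty_{\locc}(\R^n,V)\subset\lebe^1_{\locc}$, and $\D u=g'(x_1)\,(v_0\otimes e_1)$ in $\mathscr D'(\R^n,V\otimes\R^n)$; applying the linear map $A$ and using $A(v_0\otimes e_1)=0$ gives $\A u=A(\D u)=0\in\mathcal M_{\locc}(\R^n,W)$, so $u\in\bv^\A_{\locc}$.

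It remains to see that $u$ is nowhere $\lebe^{p,q}$-differentiable, for which I record two elementary reductions. First, $\lebe^{p,q}$-differentiability at $x$ forces $u\in\taylor^{1,1}(x)$: by Hölder's inequality in Lorentz spaces, $\|R^1_xu\|_{\lebe^1(B_r(x))}\leq\|R^1_xu\|_{\lebe^{p,q}(B_r(x))}\,\|\mathbbm{1}_{B_r(x)}\|_{\lebe^{p',q'}}\lesssim r^{n(1-1/p)}\,\|R^1_xu\|_{\lebe^{p,q}(B_r(x))}$ — here $p>1$ is used — and dividing by $|B_r(x)|$ turns $\|R^1_xu\|_{\lebe^{p,q}(B_r(x))}=o(r^{1+n/p})$ into $\fint_{B_r(x)}|R^1_xu|=o(r)$. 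Second, for our $u$, membership $u\in\taylor^{1,1}(x)$ at $x=(x_1,x')$ forces $g\in\taylor^{1,1}(x_1)$: writing $P^1_xu(y)=a+b_1(y_1-x_1)+b'(y'-x')$ and integrating over a box $Q_r:=(x_1-cr,x_1+cr)\times B^{n-1}_{cr}(x')\subset B_r(x)$, Fubini's theorem together with the vanishing $\int_{B^{n-1}_{cr}(x')}b'(y'-x')\dif y'=0$ gives
\begin{align*}
\fint_{|s|<cr}|v_0g(x_1+s)-a-b_1s|\dif s\leq\fint_{Q_r}|R^1_xu|\lesssim\fint_{B_r(x)}|R^1_xu|=o(r),
\end{align*}
and projecting onto $\operatorname{span}v_0$ shows $g\in\taylor^{1,1}(x_1)$.

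Chaining these, if $u$ were $\lebe^{p,q}$-differentiable at some $x=(x_1,x')$ then $g\in\taylor^{1,1}(x_1)$, contradicting the choice of $g$. Hence $u$ is $\lebe^{p,q}$-differentiable at no point, in particular not $\mathscr L^n$-a.e., and therefore $\A$ must be elliptic. The only ingredient that is not pure bookkeeping is the existence of the bounded nowhere-approximately-differentiable function $g$, which is classical; the two reductions above — the Lorentz–Hölder estimate on balls and the Fubini slicing — are the routine steps I would write out in detail.
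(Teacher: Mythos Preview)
Your proof is correct and takes a genuinely different route from the paper's. Both arguments proceed by contrapositive and exploit the same plane-wave observation: if $\A[\xi]v=0$ then any function of the form $x\mapsto f(x\cdot\xi)v$ is $\A$-free. The paper first shrinks $p$ to reduce the Lorentz scale to $\lebe^p$, then chooses a profile $g(t)\sim|t|^{-1/p}$ with an $\lebe^p$-singularity at the origin and sums translates $\sum_l 2^{-l}g((x-x_l)\cdot\xi)v$ over a dense set $\{x_l\}$, producing $u\in\bv^\A_{\locc}$ that fails to lie in $\lebe^p$ on any open set and hence cannot be $\lebe^p$-differentiable anywhere. You instead take a single \emph{bounded} profile $g$ that is nowhere approximately differentiable, so that $g\notin\taylor^{1,1}(t)$ for every $t$; the Lorentz--H\"older estimate (using $p>1$) reduces $\lebe^{p,q}$-differentiability of $u$ to $\taylor^{1,1}$, and your Fubini/Jensen slicing (averaging out the transverse linear part over the symmetric cross-section) transfers this to $g$. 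Your approach is tidier---no dense translates, no series---and handles the full Lorentz range in one stroke, at the cost of invoking the classical but less explicit existence of a bounded nowhere-approximately-differentiable function; the paper's construction is entirely explicit and shows the stronger failure $u\notin\lebe^p_{\locc}$ on any open set.
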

\begin{proof}
By making $p$ smaller if necessary, we can assume that maps in $\bv^\A_{\locc}$ are in fact $\lebe^p$--differentiable, with $p>1$.

We assume that $\A$ is not elliptic, so there exist non--zero $\xi\in\R^n$, $v\in V$ such that $\A[\xi]v=0$. For $f\in\lebe^1_{\locc}(\R)$ and $u(x)\coloneqq f(x\cdot\xi)v$ for $x\in\R^n$, a simple computation using \eqref{eq:A} shows that $\A u=0$ in $\mathscr{D}^\prime$. 
We let $g\in\lebe^1(\R)$ be a \emph{positive} map that is not $\lebe^p$--integrable in any neighbourhood of $0$, e.g., $g(t)=\rho(t)|t|^{-1/p}$ for some $\rho\in\hold^\infty_c(\R,[0,1])$ that equals one in a neighbourhood of zero. We let $\{x_l\}_{l=1}^\infty$ be dense in $\R^n$ and define
\begin{align}\label{eq:plane_wave}
u(x)\coloneqq \sum_{l=1}^\infty2^{-l}g((x-x_l)\cdot\xi)v,
\end{align}
so that we have $\A u=0$. There is no loss of generality in assuming that $|v|=1$ and $\xi=e_n$. For a cube $Q$ with sides parallel to the coordinate axes, we have that
\begin{align*}
\|u\|_{\lebe^1(Q)}&=\sum_{l=1}^\infty 2^{-l}\int_{Q}g((x-x_l)\cdot e_n)\dif x\leq \ell(Q)^{n-1}\sum_{l=1}^\infty 2^{-l}\int_{\R}g(t)\dif t<\infty,
\end{align*}
where $\ell(Q)$ denotes the side length of $Q$. In particular, $u\in\bv^\A_{\locc}$. On the other hand, in a neighbourhood of $Q_r(x_k)$ we have, by positivity of $g$,
\begin{align*}
\|u\|^p_{\lebe^p(Q_r(x_k))}&=\int_{Q_r(x_k)}\left|\sum_{j=1}^\infty 2^{-j}g((x-x_j)\cdot e_n)\right|^p\dif x\\
&\geq2^{-kp}\int_{Q_r(x_k)}g((x-x_k)\cdot e_n)^p\dif x\\
&=2^{-kp}(2r)^{n-1}\int_{-r}^r g(t)^p\dif t,
\end{align*}
so $u$ is not $p$--integrable in any open set, hence cannot be $\lebe^p$--differentiable. This contradiction completes the proof.
\end{proof}
To complete the proof of Theorem~\ref{thm:main_k=1}, it remains to see that for elliptic first order operators $\A$, cancellation is necessary for $\lebe^{n/(n-1),q}$--differentiability. This will require the closer look at the canceling condition that we took in Lemma~\ref{lem:canc_char}.
\begin{lemma}\label{lem:nec_canc}
Let $\A$ as in \eqref{eq:A} be elliptic, $1\leq q<\infty$ and suppose that maps in $\bv^\A_{\locc}$ are $\lebe^{n/(n-1),q}$--differentiable $\mathscr{L}^n$--a.e. Then $\A$ is canceling.
\end{lemma}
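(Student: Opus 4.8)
The plan is to argue by contraposition: assume $\A$ is elliptic but \emph{not} canceling, and construct a single $u\in\bv^\A_{\locc}$ that fails to be $\lebe^{n/(n-1),q}$--differentiable on a set of positive measure. The starting point is Lemma~\ref{lem:canc_char}: non-cancellation furnishes a nonzero $w\in\bigcap_{\xi}\mathrm{im\,}\A[\xi]$ and a function $u_h\in\hold^\infty(\R^n\setminus\{0\},V)$ with $\A u_h=\delta_0 w$, which for order $k=1$ is $(1-n)$--homogeneous (so of size $|\cdot|^{1-n}$ near $0$, and in particular $u_h\in\bv^\A_{\locc}$ by Lemma~\ref{lem:ell=>sob}). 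The key quantitative point is that a $(1-n)$--homogeneous nonzero function is \emph{not} in $\lebe^{n/(n-1),q}_{\locc}$ when $q<\infty$: writing $u_h(x)=|x|^{1-n}\Phi(x/|x|)$ with $\Phi\not\equiv0$, one computes that $|u_h|>\lambda$ corresponds to $|x|<(c\lambda)^{-1/(n-1)}$ on a solid angle, so $\mathscr{L}^n(\{|u_h|>\lambda\})\sim\lambda^{-n/(n-1)}$, whence $\lambda\,\mathscr{L}^n(\{|u_h|>\lambda\})^{1/p}\sim 1$ for $p=n/(n-1)$; this is bounded but not decaying, so it is in $\lebe^{n/(n-1),\infty}=\lebe^{p}_{\text{weak}}$ but the $\lebe^{p,q}$ quasi-norm $\bigl(\int_0^\infty[\cdots]^q\,\frac{d\lambda}{\lambda}\bigr)^{1/q}$ diverges for every finite $q$ (the integrand is $\sim 1$ over an infinite logarithmic range, e.g. for $\lambda\in(0,1)$).

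With this in hand, the construction mimics the superposition trick in the proof of Lemma~\ref{lem:nec_ell_appdiff}. Let $\{x_l\}_{l\geq1}$ be dense in $\R^n$ and set
\begin{align*}
u(x)\coloneqq\sum_{l=1}^\infty 2^{-l}\,\chi(x-x_l)\,u_h(x-x_l),
\end{align*}
where $\chi\in\hold^\infty_c$ is a cutoff equal to $1$ near $0$, inserted to make the sum converge in $\lebe^1_{\locc}$ (since $|\chi u_h|\in\lebe^1$ by the $(1-n)$--homogeneity and local integrability of $u_h$) and to localize each singularity. One checks $u\in\bv^\A_{\locc}$: each term $2^{-l}\chi(\cdot-x_l)u_h(\cdot-x_l)$ lies in $\bv^\A_{\locc}$ (product rule plus $\A(\chi u_h)=\chi\delta_0 w+$ lower-order $\lebe^1$ terms $=\chi(0)\delta_{0}w+\lebe^1$), the $\lebe^1_{\locc}$ sums converge, and the measures $\A$ of the terms sum in total variation because $\sum 2^{-l}<\infty$. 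Now fix any $x_k$. Near $x_k$, $u(x)=2^{-k}u_h(x-x_k)+(\text{a function bounded near }x_k\text{, since all other terms are smooth there and }\chi\equiv1)$; more precisely $u-2^{-k}u_h(\cdot-x_k)$ is continuous at $x_k$, hence locally bounded, hence locally in $\lebe^{n/(n-1),q}$. Therefore the local $\lebe^{p,q}$ quasi-norm of $u$ near $x_k$ is, up to a bounded correction, that of $2^{-k}u_h(\cdot-x_k)$, which by the first paragraph is \emph{infinite} in every ball around $x_k$. Since $\lebe^{p,q}$--differentiability at a point requires $u-P$ (and hence $u$ itself, $P$ being a polynomial) to be locally in $\lebe^{p,q}$ near that point, $u$ is not $\lebe^{n/(n-1),q}$--differentiable at any $x_k$; the $\{x_k\}$ being dense, and the non-membership in $\lebe^{p,q}_{\locc}$ persisting on a neighbourhood, this failure occurs on an open dense set, in particular on a set of positive measure. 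This contradicts the hypothesis, so $\A$ must be canceling.

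The main obstacle I anticipate is the clean handling of the Lorentz quasi-norm in two places: first, verifying rigorously that the $(1-n)$--homogeneous profile $u_h$ fails $\lebe^{p,q}_{\locc}$ precisely at the critical Lorentz exponent (an explicit distribution-function estimate, as sketched above — it is exactly the borderline case $\lebe^{p,\infty}$ versus $\lebe^{p,q}$, $q<\infty$, so one must be careful that the cutoff $\chi$ does not accidentally improve integrability, which it does not near the singularity); and second, the \emph{locality} argument — one must ensure that subtracting the Taylor polynomial $P^1_{x_k}u$ cannot cancel the singularity, which is immediate since polynomials are locally bounded while $u$ is locally unbounded in the $\lebe^{p,q}$ sense near each $x_k$. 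A secondary technical point is justifying $u\in\bv^\A_{\locc}$ for the infinite superposition, but this is routine given the uniform control $\sum 2^{-l}<\infty$ and the fact that, by ellipticity and Lemma~\ref{lem:ell=>sob}, each summand has the claimed regularity. Everything else is bookkeeping parallel to Lemma~\ref{lem:nec_ell_appdiff}.
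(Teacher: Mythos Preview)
Your approach differs from the paper's: you attempt an explicit superposition of translates of the singular solution $u_h$ (mimicking the proof of Lemma~\ref{lem:nec_ell_appdiff}), whereas the paper runs a Baire category argument in the Banach space $\bv^\A(\ball)$.

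There is, however, a genuine gap. You claim that near $x_k$ the remainder
$v_k\coloneqq u-2^{-k}\chi(\cdot-x_k)u_h(\cdot-x_k)=\sum_{m\neq k}2^{-m}\chi(\cdot-x_m)u_h(\cdot-x_m)$
is ``continuous at $x_k$, hence locally bounded''. This is false. While each individual term is indeed smooth at $x_k$ (since $x_m\neq x_k$), the $\{x_m\}$ are \emph{dense}, so every neighbourhood of $x_k$ contains infinitely many $x_m$, and near each such $x_m$ the corresponding term blows up like $|\cdot-x_m|^{1-n}$. Hence $v_k$ is unbounded on every neighbourhood of $x_k$, and there is no reason for it to lie in $\lebe^{n/(n-1),q}$ there either---indeed $v_k$ has exactly the same structure as $u$ with one summand removed. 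Without control on $v_k$ you cannot conclude that $u\notin\lebe^{n/(n-1),q}(\ball_\rho(x_k))$: both pieces of your decomposition fail to be in $\lebe^{n/(n-1),q}$ locally, and nothing rules out cancellation between them.

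The reason the superposition trick succeeds in Lemma~\ref{lem:nec_ell_appdiff} is that there the building block $g$ is a \emph{positive scalar}, so one has the pointwise lower bound $|u|\geq 2^{-k}g((\cdot-x_k)\cdot\xi)$, which forbids cancellation. Here $u_h$ is vector--valued with no sign, and no such lower bound is available. The paper's Baire category argument sidesteps the issue entirely: one shows that the sets $X_{l,m,L}\coloneqq\{u\in\bv^\A(\ball)\colon\|u\|_{\lebe^{n/(n-1),q}(\ball(x_l,m^{-1})\cap\ball)}\leq L\}$ are closed (Fatou) and have empty interior (each lies in the proper linear subspace $X_{l,m,\infty}$, witnessed by a single cut--off translate of $u_h$), and Baire's theorem then produces a map outside their countable union---i.e., one that fails to be in $\lebe^{n/(n-1),q}$ on any open set.
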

\begin{proof}
We assume that $\A$ is not canceling, and let $u_h\in\bv^\A_{\locc}$ be as in the proof of Lemma~\ref{lem:canc_char}, so $u_h\notin\lebe^{n/(n-1),q}(\ball_r(0))$ for any $r>0$. 

We will formulate a Baire category argument to prove existence of a $\bv^\A$--map that is not in $\lebe^{n/(n-1),q}$ in any open set, which clearly suffices to prove the claim. For a ball $\ball\subset\R^n$, let $\mathfrak{X}\coloneqq \bv^\A(\ball)$, which is a complete metric space under the metric induced by the obvious norm, let $\{x_l\}_{l=1}^\infty$ be a dense subset and consider the subsets
\begin{align*}
X_{l,m,L}\coloneqq \{u\in \mathfrak{X}\colon \|u\|_{\lebe^{n/(n-1),q}(\ball(x_l,m^{-1})\cap\ball)}\leq L\}
\end{align*}
for positive integers $l,m,L$. We also make the convention 
\begin{align*}
X_{l,m,\infty}\coloneqq \{u\in \mathfrak{X}\colon \|u\|_{\lebe^{n/(n-1),q}(\ball(x_l,m^{-1})\cap\ball)}<\infty\},
\end{align*}
which is a vector subspace of $\mathfrak{X}$.

It is then easy to check that the sets $X_{l,m,L}$ are nowhere dense. Let $u_i\in X_{l,m,L}$ be such that $u_i\rightarrow u$ in $\mathfrak{X}$. Then $u_i$ converges pointwisely to $u$ on a subsequence which we do not relabel. By the Fatou property in Lorentz spaces we have that
\begin{align*}
\|u\|_{\lebe^{n/(n-1),q}(\ball(x_l,m^{-1})\cap\ball)}\leq\liminf_{i\rightarrow\infty}\|u_i\|_{\lebe^{n/(n-1),q}(\ball(x_l,m^{-1})\cap\ball)}\leq L,
\end{align*}
so $X_{l,m,L}$ is closed. Since $X_{l,m,L}\subset X_{l,m,\infty}$, which is a proper subspace of $\mathfrak{X}$, as $u_h(\cdot-x_l)\notin\lebe^{n/(n-1),q}(\ball(x_l,m^{-1})\cap\ball)$, we have that $X_{l,m,L}$ have empty interior.

By the Baire's Category Theorem, we obtain existence of a map in $\bv^\A(\ball)$, that is not in any $X_{l,m,L}$, hence in no $X_{l,m,\infty}$. The proof is complete.
\end{proof}

\section{Proof of Theorem~\ref{thm:main_k_diff}}\label{sec:k}

We extend the ideas from Section~\ref{sec:k=1}. We begin with sufficiency of the algebraic conditions for the claims of Theorem~\ref{thm:main_k_diff}. We make the convention that $n/0=\infty$.
\begin{lemma}
Let $\A$ as in \eqref{eq:Ak} be elliptic, $1\leq j\leq \min\{n,k\}$. Suppose that
\begin{enumerate}
\item If $1\leq j\leq \min\{n-1,k\}$ we have that $\A$ is canceling;
\item If $j=n\leq k$ we have that $\A$ satisfies condition~\eqref{eq:mistery_cond2}.
\end{enumerate}
Then for all $u\in\bv^\A_{\locc}$ we have that $\D^{k-j}u\in\taylor^{j,n/(n-j)}(x)$ for $\mathscr{L}^n$--a.e. $x\in\R^n$.
\end{lemma}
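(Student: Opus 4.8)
The plan is to mirror the first-order argument from Lemma~\ref{lem:suff_EC}, replacing $u$ by $\D^{k-j}u$ and the embedding \eqref{eq:VS_appdiff} by the appropriate critical inequality --- \eqref{eq:VS_j} in case (a) and \eqref{eq:VS_k=n} (equivalently Theorem~\ref{thm:main_infty}) in case (b) --- and then invoking the Calder\'on--Zygmund commutation Lemma~\ref{lem:CZ_lemma} to transfer $\lebe^p$-differentiability of $\D^{k-j}u$ down to a $j$-th order $\lebe^p$-Taylor expansion of $\D^{k-j}u$ itself.

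First I would reduce to the case $u\in\bv^\A(\R^n)$ by multiplying with a smooth cut-off (using the Leibniz rule and Lemma~\ref{lem:ell=>sob} to keep $\A(\rho u)\in\mathcal{M}$, exactly as in the proof of Lemma~\ref{lem:algebra_Au}). By Lemma~\ref{lem:algebra_Au}, for $\mathscr{L}^n$-a.e. $x$ the map $\D^{k-1}u$ is $\lebe^1$-differentiable at $x$ with approximate gradient $\nabla^k u(x)$ satisfying $\A^{ac}u(x)=A(\nabla^k u(x))$, and in addition $x$ is a Lebesgue point of $\A u$ (i.e.\ of the measure $\A u$ against $\mathscr{L}^n$). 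Fix such an $x$. The key step is an interpolation-type estimate analogous to \eqref{eq:key_est}: for $v\in\bv^\A_{\locc}$,
\begin{align*}
\|\D^{k-j}v\|_{\lebe^{n/(n-j)}(\ball_r(x))}\lesssim |\A v|\bigl(\overline{\ball_{2r}(x)}\bigr)+\sum_{i=0}^{k}r^{i-j}\|\D^{k-i}v\|_{\lebe^1(\ball_{2r}(x))},
\end{align*}
with the convention $n/0=\infty$ and $\lebe^{\infty}$ in place of the left side when $j=n$. This is obtained by mollifying $v$ (which converges $\A$-strictly by \cite[Thm.~2.8]{BDG}), multiplying by a cut-off $\rho$ equal to $1$ on $\ball_r(x)$ with $|\D^i\rho|\lesssim r^{-i}$, applying \eqref{eq:VS_j} (resp.\ Theorem~\ref{thm:main_infty}) to $\rho v_m$, expanding $\A(\rho v_m)$ by the Leibniz rule into $\rho\,\A v_m$ plus lower-order terms carrying the factors $r^{-i}$, and passing to the limit via Fatou.

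Next I would apply this estimate to $v\coloneqq R^1_x\bigl(\D^{k-1}u\bigr)$, or more precisely to the remainder $w\coloneqq u-P^k_x u$ where $P^k_x u$ is the degree-$k$ Taylor polynomial built from the approximate derivatives $\nabla^1u(x),\dots,\nabla^ku(x)$ guaranteed by Lemma~\ref{lem:algebra_Au} and Lemma~\ref{lem:CZ_lemma} (which upgrades $\D^{k-1}u\in\taylor^{1,p}(x)$ to $\D^{k-i}u\in\taylor^{i,p}(x)$ for all $i$, for $1\le p<n/(n-1)$, hence in particular for $p=1$). Then $\D^{k-i}w=\D^{k-i}u-P^{i}_x(\D^{k-i}u)$ has $\|\D^{k-i}w\|_{\lebe^1(\ball_{2r}(x))}=o(r^{n+i})$ as $r\downarrow0$ by the $i$-th order $\lebe^1$-differentiability, so each term $r^{i-j}\|\D^{k-i}w\|_{\lebe^1(\ball_{2r}(x))}=o(r^{n})$; the constant and linear parts of $P^k_x u$ are annihilated by taking enough derivatives only when $i\le k-?$, so one must be slightly careful and note $\A w=\A u-A(\D^k P^k_xu)=\A u-A(\nabla^k u(x))$, whence $|\A w|(\overline{\ball_{2r}(x)})=|\A u-A(\nabla^k u(x))\mathscr{L}^n|(\overline{\ball_{2r}(x)})=o(r^n)$ by \eqref{eq:algebra_Au} and Lebesgue differentiation for the Radon measure $\A u$. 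Combining, $\|\D^{k-j}w\|_{\lebe^{n/(n-j)}(\ball_r(x))}=o(r^n)$, which is precisely the statement $\D^{k-j}u\in\taylor^{j,n/(n-j)}(x)$ since $\D^{k-j}w=\D^{k-j}u-P^j_x(\D^{k-j}u)$.

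The main obstacle is bookkeeping in the Leibniz expansion and making sure every error term built from $\D^i\rho$ genuinely scales as a power of $r$ that, after multiplication by the $o(r^{n+\ell})$ bounds on $\|\D^{k-\ell}w\|_{\lebe^1}$, still yields $o(r^n)$ --- in particular that no term is merely $O(r^n)$. One must also verify that $P^j_x(\D^{k-j}u)$ really is $\D^{k-j}$ applied to $P^k_x u$, i.e.\ the compatibility of the Taylor polynomials across derivative orders, which is exactly the content of Lemma~\ref{lem:CZ_lemma} (the exchangeability of weak and $\lebe^p$-derivatives); invoking it correctly is the subtle point, since we only know $u\in\sobo^{k-1,1}_{\locc}$ and not $u\in\sobo^{k,1}_{\locc}$.
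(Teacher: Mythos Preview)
Your approach is essentially the paper's: derive a localized estimate by mollification, cut-off, the critical embedding (\eqref{eq:VS_j} or Theorem~\ref{thm:main_infty}), and the Leibniz rule; apply it to $w=u-P^k_xu$; use Lemma~\ref{lem:CZ_lemma} to ensure $\D^{k-i}w=R^i_x\D^{k-i}u$ and \eqref{eq:algebra_Au} together with Lebesgue differentiation to handle the measure term. The only slip is in the Leibniz bookkeeping: the commutator terms $B_i(D^i\rho,D^{k-i}v_m)$ carry $|D^i\rho|\lesssim r^{-i}$, so the sum in your key estimate should read $\sum_{i=1}^k r^{-i}\|\D^{k-i}v\|_{\lebe^1(\ball_{2r}(x))}$ (no $i=0$ term, since $D^kv$ need not be integrable, and exponent $-i$, not $i-j$); with the correct power each term is $r^{-i}\cdot o(r^{n+i})=o(r^n)$ as you want, whereas your stated $r^{i-j}\cdot o(r^{n+i})=o(r^{n+2i-j})$ would fail to give $o(r^n)$ whenever $2i<j$.
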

\begin{proof}
We choose $x\in\R^n$ such that, by Lemmas~\ref{lem:ell=>sob}, \ref{lem:CZ_lemma}, \ref{lem:algebra_Au}, we have that $\D^{k-j}u\in\taylor^{j,1}(x)$ for $j=1\ldots k$, where $\D^{k-j}u\in\lebe^1_{\locc}$ are weak derivatives of $u$, and \eqref{eq:algebra_Au} holds. In particular, we have a $k$--th order $\lebe^1$--Taylor Polynomial $P^k_x u$ of $u$ at $x$. Uniqueness of $\lebe^p$--Taylor polynomials implies that $\D^{k-j}P^k_x u=P^{j}_x \D^{k-j}u$ and that $\A P^k_xu=A(\nabla^k u(x))$. The set of such $x$ has $\mathscr{L}^n$--null complement.

We fix $1\leq j\leq \min\{n-1,k\}$, so that $\A$ is canceling, and estimate: 
\begin{align}\label{eq:key_est_k}
\|\D^{k-j}v\|_{\lebe^{n/(n-j)}(\ball_r(x))}\lesssim|\A v|(\overline{\ball_{2r}(x)})+\sum_{l=1}^k r^{-l}\|\D^{k-l}v\|_{\lebe^1(\ball_{2r}(x))},
\end{align}
which holds for all $v\in\bv^\A_{\locc}$. This is obtained analogously to \eqref{eq:key_est}, using \eqref{eq:VS_j}  instead of \cite[Thm.~8.5]{VS}. We apply \eqref{eq:key_est_k} to $R^k_xu\coloneqq u-P^k_xu$ to get
\begin{align*}
\left(\int_{\ball_r(x)}|\D^{k-j}R^k_xu|^{n/(n-j)}\dif y\right)^{(n-j)/n}&\lesssim|\A u-\A P^k_x u|(\overline{\ball_{2r}(x)})\\
&+\sum_{l=1}^{k}\frac{1}{r^l}\int_{\ball_{2r}(x)}|\D^{k-l}u-\D^{k-l}P^k_xu|\dif y\\
&=|\A u-A(\nabla^ku(x))|(\overline{\ball_{2r}(x)})\\
&+\sum_{l=1}^{k}\frac{1}{r^l}\int_{\ball_{2r}(x)}|\D^{k-l}u-P^l_x\D^{k-l}u|\dif y.
\end{align*}
To obtain the correct scaling, we multiply by $r^{j-n}$ to get
\begin{align*}
\left(\fint_{\ball_r(x)}|R^j_x\D^{k-j}u|^{n/(n-j)}\dif y\right)^{(n-j)/n}&\lesssim r^j\dfrac{|\A u-\A^{ac}u(x)|(\overline{\ball_{2r}(x)})}{r^n}\\
&+\sum_{l=1}^{k}r^{j-l}\fint_{\ball_{2r}(x)}|R^l_x\D^{k-l}u|\dif y.
\end{align*}
As $r\downarrow0$, the first term equals $o(r^j)$ by Lebesgue differentiation, whereas the averaged integrals equal $o(r^l)$ for all $l=1\ldots k$. The first case is proved.

If $j=n\leq k$, we suppose that $\A$ satisfies condition~\eqref{eq:mistery_cond2} and replace \eqref{eq:key_est_k} with
\begin{align}\label{eq:key_est_k_large}
\|\D^{k-n}v\|_{\lebe^\infty(\ball_r(x))}\lesssim|\A v|(\overline{\ball_{2r}(x)})+\sum_{l=1}^k r^{-l}\|\D^{k-l}v\|_{\lebe^1(\ball_{2r}(x))}
\end{align}
for $v\in\bv^\A_{\locc}$. This, again, follows just as \eqref{eq:key_est}, by using Theorem~\ref{thm:main_infty} instead of \eqref{eq:VS_j}. We proceed as above, with $v\coloneqq R^k_x u$, so that we have
\begin{align*}
\|R^n_x\D^{k-n}u\|_{\lebe^\infty(\ball_r(x))}\lesssim r^n\frac{|\A u-\A^{ac}u(x)|(\overline{\ball_{2r}(x)})}{r^n}+\sum_{l=1}^k r^{n-l}\fint_{\ball_{2r}(x)}|R^l_x\D^{k-l}u|
,
\end{align*}
and we can conclude as in the previous case.
\end{proof}
We next prove necessity of ellipticity for both Theorems~\ref{thm:main_k_diff}, \ref{thm:sub_crit}. This is contained in the following Lemma, which is proved by an inexpensive modification of the proof of Lemma~\ref{lem:nec_ell_appdiff}.
\begin{lemma}\label{lem:nec_ell_k}
Let $\A$ be as in \eqref{eq:Ak}. Suppose that there exist $1\leq j\leq k$, $1<p<\infty$ such that all $u\in\bv^\A_{\locc}$ are such that $\D^{k-j}u\in\taylor^{j,p}(x)$ for $\mathscr{L}^n$--a.e. $x\in\R^n$. Then $\A$ is elliptic.
\end{lemma}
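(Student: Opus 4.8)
The plan is to argue by contradiction and carry $k-j$ extra derivatives through the plane-wave construction in the proof of Lemma~\ref{lem:nec_ell_appdiff}. Suppose $\A$ is not elliptic, so there are $\xi\in\R^n\setminus\{0\}$ and $v\in V\setminus\{0\}$ with $\A[\xi]v=A(v\otimes^k\xi)=0$; after an orthogonal change of coordinates I may assume $\xi=e_n$. For any $h\in\lebe^1_{\locc}(\R)$ the map $w(x):=h(x\cdot e_n)v$ satisfies $\A w=0$ in $\mathscr{D}^\prime$, since $D^kw=h^{(k)}(x\cdot e_n)\,v\otimes^ke_n$ distributionally and $A(v\otimes^ke_n)=0$, while its distributional derivative of order $k-j$ is $\D^{k-j}w=h^{(k-j)}(x\cdot e_n)\,v\otimes^{k-j}e_n$. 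Thus I am free to prescribe the profile of $\D^{k-j}$ of a plane wave by prescribing $h^{(k-j)}$. Since membership of a function in $\taylor^{j,p}(x)$ forces that function to lie in $\lebe^p(\ball_r(x))$ for some $r>0$ (otherwise the average $(\fint_{\ball_r(x)}|\,\bigcdot\,-P|^p)^{1/p}$ is infinite for all small $r$, hence not $o(r^j)$), it suffices to produce a single $u\in\bv^\A_{\locc}$ whose $\D^{k-j}u$ is not locally $p$-integrable on any open set; this will contradict the hypothesis.

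The profile $h$ is chosen with $g:=h^{(k-j)}$ equal to $|t|^{-1/p}$ near $0$ — this is in $\lebe^1_{\locc}(\R)$ because $p>1$, is non-negative there, and is not in $\lebe^p$ near $0$ — and bounded and compactly supported away from $0$; concretely one may take $h:=\psi\cdot h_0$, where $h_0$ is a $(k-j)$-fold antiderivative of $|t|^{-1/p}\chi_{(-1,1)}$ (taken from $-\infty$) and $\psi\in\hold^\infty_c(\R)$ equals $1$ on $[-1,1]$. If $k-j=0$ no antiderivative is needed and $g\geq0$ globally, which is exactly the situation of Lemma~\ref{lem:nec_ell_appdiff}. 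In all cases $h\in\lebe^1(\R)$ and $h^{(k-j)}\in\lebe^1(\R)$, both compactly supported. With $\{x_l\}_{l\geq1}$ dense in $\R^n$, I set $u(x):=\sum_{l\geq1}2^{-l}h((x-x_l)\cdot e_n)v$. A one-dimensional slicing estimate gives $\|h((\,\bigcdot\,-x_l)\cdot e_n)v\|_{\lebe^1(Q)}\leq \ell(Q)^{n-1}\|h\|_{\lebe^1(\R)}|v|$ for every cube $Q$, so the series converges in $\lebe^1_{\locc}$; hence $u\in\lebe^1_{\locc}$, and since the series converges in $\mathscr{D}^\prime$ with each term $\A$-free, $\A u=0\in\mathcal{M}_{\locc}$, i.e.\ $u\in\bv^\A_{\locc}$.

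It remains to show $\D^{k-j}u=S(\,\bigcdot\,)\,v\otimes^{k-j}e_n$, with $S(x)=\sum_l2^{-l}h^{(k-j)}((x-x_l)\cdot e_n)$ (a series converging in $\lebe^1_{\locc}$ as $h^{(k-j)}\in\lebe^1(\R)$), is in $\lebe^p$ on no open set. Write $h^{(k-j)}=g_{+}+g_{-}$ with $g_{+}:=h^{(k-j)}\chi_{(-1,1)}=|t|^{-1/p}\chi_{(-1,1)}\geq0$ and $g_{-}$ the bounded, compactly supported remainder, and split $S=S_{+}+S_{-}$ accordingly. Then $|S_{-}|\leq\|g_{-}\|_{\lebe^\infty}$, so $S_{-}\in\lebe^\infty$, whereas $S_{+}\geq0$ and, for $x\in\ball_1(x_m)$, $S_{+}(x)\geq 2^{-m}|(x-x_m)\cdot e_n|^{-1/p}$ by dropping all terms $l\neq m$. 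Since $\int_{\ball_r(x_m)}|(x-x_m)\cdot e_n|^{-1}\,\dif x=\infty$ for every $r>0$, this forces $\|S\|_{\lebe^p(\ball_r(x_m))}=\infty$ for every $m$ and every $r\in(0,1)$; by density of $\{x_m\}$, $\D^{k-j}u$ is locally $p$-integrable on no open set, hence $\D^{k-j}u\notin\taylor^{j,p}(x)$ for every $x\in\R^n$, contradicting the hypothesis. This proves ellipticity of $\A$.

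The two points needing care are exactly those above. First, one must check that $\D^{k-j}u$ really is the termwise weak derivative, which is why $h$ is taken compactly supported with $h^{(k-j)}\in\lebe^1$, so all relevant series converge in $\lebe^1_{\locc}$; the distributional chain rule $D^m[h(x\cdot e_n)]=h^{(m)}(x\cdot e_n)\otimes^me_n$ is routine by slicing. Second — and this is the genuine novelty over the first-order case — for $k-j\geq1$ a compactly supported $h$ satisfies $\int_\R h^{(k-j)}=0$, so $h^{(k-j)}$ cannot be taken non-negative and cancellation in the superposition is a priori possible; isolating the singular, non-negative part $g_{+}$ and observing that the complementary series $S_{-}$ is merely bounded is precisely what rules this out. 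Everything else is the bookkeeping already carried out in the proof of Lemma~\ref{lem:nec_ell_appdiff}.
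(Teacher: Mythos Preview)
Your proof is correct and follows the same overall plane-wave/superposition strategy as the paper, but with one genuine difference in execution. The paper dispenses with your splitting $h^{(k-j)}=g_{+}+g_{-}$ by choosing the profile
\[
g(t)=|t|^{k-j-1/p}(\mathrm{sgn}\,t)^{k-j},
\]
which is locally integrable and whose $(k-j)$-th distributional derivative is $c\,|t|^{-1/p}$ with $c>0$; thus $g^{(k-j)}$ is \emph{globally} non-negative and the positivity argument from Lemma~\ref{lem:nec_ell_appdiff} applies to $\D^{k-j}u$ without any decomposition. Your route---taking a compactly supported profile and then isolating the non-negative singular part $g_{+}$ from a bounded remainder $g_{-}$---is slightly longer but has the minor advantage that compact support of $h$ and $h^{(k-j)}$ makes the $\lebe^1_{\locc}$-convergence of the defining series for $u$ and $\D^{k-j}u$ completely transparent, whereas the paper's $g$ grows at infinity and one must tacitly choose the dense sequence $\{x_l\}$ with controlled growth. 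In short: the paper uses a clever explicit antiderivative to keep the derivative non-negative; you compensate for the forced sign change with a clean bounded-plus-singular split. Both arguments are valid.
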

\begin{proof}
Suppose that $\A$ is not elliptic, and let $\xi$, $v$ be as in the proof of Lemma~\ref{lem:nec_ell_appdiff}. We let $g(t)=|t|^{k-j-1/p}(\mathrm{sgn}(t))^{k-j}$ for $t\in\R$, so $g\in\lebe^1_{\locc}(\R)$, and define $u$ by \eqref{eq:plane_wave}. It is clear that $\A u=0$ and that $u\in\lebe^1_{\locc}$ (as in the proof of Lemma~\ref{lem:nec_ell_appdiff}), so $u\in\bv^\A_{\locc}$. A simple computation shows that 
\begin{align*}
\D^{k-j}u(x)&=\sum_{l=1}^\infty 2^{-l} \dfrac{\dif^{k-j} g}{\dif t^{k-j}}((x-x_l)\cdot\xi)v\otimes^{k-j}\xi\\
&=c(p,j,k)\sum_{l=1}^\infty 2^{-l}|(x-x_l)\cdot\xi|^{-1/p}v\otimes^{k-j}\xi,
\end{align*}
which, by an argument as in the proof of Lemma~\ref{lem:nec_ell_appdiff}, is nowhere $\lebe^p$--integrable. The proof is complete.
\end{proof}
We also generalize Lemma~\ref{lem:nec_canc}, to obtain necessity of cancellation for Theorem~\ref{thm:main_k_diff}\ref{itm:a}. Again, Lemma~\ref{lem:canc_char} will prove to be instrumental.
\begin{lemma}\label{lem:nec_canc_k}
Let $\A$ as in \eqref{eq:Ak} be elliptic. Suppose that for  some $1\leq j\leq\min\{k,n-1\}$ we have that all $u\in\bv^\A_{\locc}$ are such that $\D^{k-j}u\in\taylor^{j,n/(n-j)}(x)$ for $\mathscr{L}^n$--a.e. $x\in\R^n$. Then $\A$ is canceling.
\end{lemma}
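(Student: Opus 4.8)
Assume, for contradiction, that $\A$ is elliptic but \emph{not} canceling; I plan to follow the strategy of Lemma~\ref{lem:nec_canc}, with $\D^{k-j}u$ playing the r\^ole that $u$ did there. By Lemma~\ref{lem:canc_char} there are $u_h\in\bv^\A_{\locc}$ and $w\in W\setminus\{0\}$ with $\A u_h=\delta_0w$; since $1\leq j\leq\min\{k,n-1\}$ one has $k-j\geq\max\{0,k-n+1\}$, so $\D^{k-j}u_h$ is a $(j-n)$--homogeneous map, smooth on $\R^n\setminus\{0\}$, and it is nonzero (its Fourier transform is $\A^\dagger[\xi]w\otimes^{k-j}\xi$, nonvanishing for $\xi\neq0$ because $w\in\mathrm{im\,}\A[\xi]$). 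Being nonzero, smooth and $(j-n)$--homogeneous away from $0$ with $1\leq n-j\leq n-1$, it is $\gtrsim|{\cdot}|^{j-n}$ on a cone near the origin, hence $\D^{k-j}u_h\notin\lebe^{n/(n-j)}(U)$ for every open $U\ni0$; this is exactly Proposition~\ref{prop:VS_canc} applied to $u_h$. Consequently every translate $\D^{k-j}\bigl(u_h(\cdot-z)\bigr)$ fails $\lebe^{n/(n-j)}$--integrability in every neighbourhood of $z$.

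Next I would run a Baire category argument in a fixed ball $\ball$. Set $\mathfrak{X}\coloneqq\bv^\A(\ball)$, a complete metric space for $\|u\|_{\lebe^1(\ball)}+|\A u|(\ball)$. By Lemma~\ref{lem:ell=>sob} and the convolution representation in its proof, $u\mapsto\D^{k-j}u$ is a bounded linear map from $\mathfrak{X}$ into $\lebe^p_{\locc}(\ball)$ for $1\leq p<n/(n-1)$; in particular, convergence in $\mathfrak{X}$ forces $\mathscr{L}^n$--a.e.\ convergence of the $(k-j)$--th derivatives along a subsequence. For a dense sequence $\{x_l\}_{l\geq1}\subset\ball$ and positive integers $l,m,L$, put
\begin{align*}
X_{l,m,L}\coloneqq\bigl\{u\in\mathfrak{X}\colon\|\D^{k-j}u\|_{\lebe^{n/(n-j)}(\ball(x_l,m^{-1})\cap\ball)}\leq L\bigr\},
\end{align*}
and let $X_{l,m,\infty}$ be the corresponding vector subspace obtained by replacing ``$\leq L$'' with ``$<\infty$''. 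As in Lemma~\ref{lem:nec_canc}, the Fatou property of $\lebe^{n/(n-j)}$ applied along an a.e.--convergent subsequence shows that $X_{l,m,L}$ is closed; and $X_{l,m,L}\subset X_{l,m,\infty}\subsetneq\mathfrak{X}$, since $u_h(\cdot-x_l)$ belongs to $\mathfrak{X}$ (a translate of a $\bv^\A_{\locc}$--map restricted to the bounded set $\ball$, whose $\A$--part $\delta_{x_l}w$ has finite mass) but not to $X_{l,m,\infty}$, because $x_l$ is an interior point of $\ball$. Hence each $X_{l,m,L}$ is nowhere dense, and Baire's theorem produces $u\in\mathfrak{X}$ with $\D^{k-j}u\notin\lebe^{n/(n-j)}(\ball(x_l,m^{-1})\cap\ball)$ for all $l,m$, so that $\D^{k-j}u\notin\lebe^{n/(n-j)}(U)$ for every open $U\subset\ball$.

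Finally I would globalise. Choosing $\rho\in\hold^\infty_c(\ball)$ with $\rho\equiv1$ on a ball $\ball'\Subset\ball$, the Leibniz rule together with Lemma~\ref{lem:ell=>sob} gives $\rho u\in\bv^\A(\R^n)\subset\bv^\A_{\locc}$, while $\D^{k-j}(\rho u)=\D^{k-j}u$ on $\ball'$, so $\D^{k-j}(\rho u)\notin\lebe^{n/(n-j)}(U)$ for every open $U\subset\ball'$. Since membership of a function $f$ in $\taylor^{j,n/(n-j)}(x)$ forces $f\in\lebe^{n/(n-j)}(\ball(x,r))$ for all small $r$ (the Taylor polynomial being locally bounded), we conclude $\D^{k-j}(\rho u)\notin\taylor^{j,n/(n-j)}(x)$ for \emph{every} $x\in\ball'$, a set of positive Lebesgue measure, contradicting the hypothesis. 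Therefore $\A$ is canceling. The only genuinely technical point is the continuity of $u\mapsto\D^{k-j}u$ on $\bv^\A(\ball)$ — a Caccioppoli--Deny--Lions--type interior estimate, supplied by the proof of Lemma~\ref{lem:ell=>sob} — together with the bookkeeping ensuring the bad map fails the Taylor expansion on a set of positive measure (hence the use of a cut-off that is identically $1$ on a full ball, rather than of a single singular point).
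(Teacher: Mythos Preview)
Your argument is correct and follows the same Baire-category strategy as the paper, using the singular solution $u_h$ from Lemma~\ref{lem:canc_char}. The only substantive difference is your choice of ambient space: you run Baire in $\bv^\A(\ball)$ and globalise via a cut-off at the end, whereas the paper works in $\mathfrak{X}=\bv^\A(\R^n)$ and instead cuts off $u_h$ to obtain the bad element $\rho_{l,m}u_h(\cdot-x_l)\in\bv^\A(\R^n)$.

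This choice makes your key technical claim---boundedness of $u\mapsto D^{k-j}u$ from $\bv^\A(\ball)$ into $\lebe^p_{\locc}(\ball)$---slightly less immediate than you suggest. Lemma~\ref{lem:ell=>sob} gives only qualitative membership in $\sobo^{k-1,p}_{\locc}$, and the convolution identity in its proof involves Leibniz remainder terms $B_j(D^j\rho,D^{k-j}u)$ whose quantitative control is circular without iteration. The paper sidesteps this by using the \emph{global} representation $D^{k-j}v=D^{k-j}K^\A\star\A v$ for $v\in\bv^\A(\R^n)$, which yields $\|D^{k-j}v\|_{\lebe^p(B)}\leq C(B)|\A v|(\R^n)$ directly from Riesz-potential bounds (their estimate~\eqref{eq:L1_loc_conv}). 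Your claim is nonetheless true---e.g.\ by the closed graph theorem applied to $u\mapsto D^{k-j}u|_{\ball'}$ for fixed $\ball'\Subset\ball$---so the gap is easily filled; alternatively, switching to $\mathfrak{X}=\bv^\A(\R^n)$ removes both this issue and the need for your final globalisation step.
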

\begin{proof}
We assume that $\A$ is not canceling and consider $u_h\in\bv^\A_{\locc}$ as in the proof of Lemma~\ref{lem:canc_char}, so that $\D^{k-j}u_h\notin\lebe^{n/(n-j)}(\ball_r(0))$ for any $r>0$. As before, we work in $\mathfrak{X}\coloneqq \bv^\A(\R^n)$, which is a complete metric space in the metric induced by the norm $\|\cdot\|_{\bv^\A(\R^n)}\coloneqq \|\cdot\|_{\lebe^1(\R^n)}+|\A\cdot|(\R^n)$. Let $\{x_l\}_{l=1}^\infty$ be a dense subset of $\R^n$ and consider the subsets
\begin{align*}
Y_{l,m,L}\coloneqq \{u\in \mathfrak{X}\colon \|\D^{k-j}u\|_{\lebe^{n/(n-j)}(\ball(x_l,m^{-1}))}\leq L\}
\end{align*}
for positive integers $l,m,L$, and the subspace
\begin{align*}
Y_{l,m,\infty}\coloneqq \{u\in \mathfrak{X}\colon \|\D^{k-j}u\|_{\lebe^{n/(n-j)}(\ball(x_l,m^{-1}))}<\infty\}.
\end{align*}
We check that the sets $Y_{l,m,L}$ are nowhere dense. Let $u_i\in Y_{l,m,L}$ be such that $u_i\rightarrow u$ in $\mathfrak{X}$. By standard mollification arguments, we have that $\hold_c^\infty(\R^n,V)$ is ($\A$--)strictly dense in $\mathfrak{X}$ \cite[Thm.~2.8(b)]{BDG}, so that maps in $\mathfrak{X}$ can be represented by \eqref{eq:conv_rep}. In particular, Lemma~\ref{lem:conv} implies that
\begin{align*}
\D^{k-j}v=\D^{k-j}K^\A\star\A v
\end{align*}
for $v\in \mathfrak{X}$, where $\D^{k-j}K^\A$ is $(j-n)$--homogeneous. Standard boundedness of Riesz potentials \cite[Thm.~V.1]{Stein} and inclusion of Lorentz spaces imply that, for any $1\leq p<n/(n-j)$ and any ball $B\subset\R^n$,
\begin{align}\label{eq:L1_loc_conv}
\|\D^{k-j}v\|_{\lebe^p(B)}\leq C(B)|\A v|(\R^n)
\end{align}
for any $v\in\mathfrak{X}$ (here, it is crucial that $0<j<n$). Thus $\D^{k-j}u_i$ converges $\mathscr{L}^n$--a.e. to $\D^{k-j}u$ on a subsequence which we do not relabel. By Fatou's Lemma we have:
\begin{align*}
\|\D^{k-j}u\|_{\lebe^{n/(n-j)}(\ball(x_l,m^{-1}))}\leq\liminf_{i\rightarrow\infty}\|\D^{k-j}u_i\|_{\lebe^{n/(n-j)}(\ball(x_l,m^{-1})}\leq L,
\end{align*}
so $Y_{l,m,L}$ is closed. Next, we consider cut--off functions $\rho_{l,m}\in\hold^\infty_c$ such that $\rho_{l,m}=1$ in $B(x_l,m^{-1})$. It follows that $\rho_{l,m}u_h(\cdot-x_l)\in \bv^\A(\R^n)$ by the regularity of $u_h$ away from zero and the Leibniz rule. However, $\D^{k-j}u_h(\cdot-x_l)\notin\lebe^{n/(n-j)}(\ball(x_l,m^{-1}))$, so that $Y_{l,m,\infty}$ is a proper subspace of $\mathfrak{X}$. Therefore, the sets $Y_{l,m,L}\subset Y_{l,m,\infty}$ have empty interior, so we can conclude by Baire's Category Theorem.
\end{proof}
It remains to establish necessity of condition~\eqref{eq:mistery_cond2} for Theorem~\ref{thm:main_k_diff}\ref{itm:b}. The proof is, of course, similar to that of Lemma~\ref{lem:nec_canc_k}.
\begin{lemma}\label{lem:nec_mist_cond}
Let $\A$ as in \eqref{eq:Ak} be elliptic, $k\geq n\geq1$. Suppose we have that all $u\in\bv^\A_{\locc}$ are such that $\D^{k-n}u\in\taylor^{n,\infty}(x)$ for $\mathscr{L}^n$--a.e. $x\in\R^n$. Then $\A$ satisfies condition~\eqref{eq:mistery_cond2}.
\end{lemma}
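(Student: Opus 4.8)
The plan is to mirror the Baire category argument of Lemma~\ref{lem:nec_canc_k}, with $\lebe^{n/(n-j)}$ replaced by $\lebe^\infty$ and $\taylor^{j,n/(n-j)}$ by $\taylor^{n,\infty}$, the essential new point being that the Riesz potential bound~\eqref{eq:L1_loc_conv} is no longer available at $j=n$ (the relevant kernel $\check L$ does not decay; it grows logarithmically by \eqref{eq:conv_ker}), so I replace the Lorentz--Fatou step by weak-$*$ compactness of balls in $\lebe^\infty$. First I would argue by contradiction: suppose $\A$ fails condition~\eqref{eq:mistery_cond2}, so there is $w\in\bigcap_{\xi\in\mathbb{S}^{n-1}}\mathrm{im\,}\A[\xi]$ with $\mathcal{L}w\neq0$; in particular $w\neq0$ and $\A$ is non--canceling, so by the proof of Lemma~\ref{lem:canc_char} the map $u_h\coloneqq K^\A w$ lies in $\bv^\A_{\locc}$, is smooth away from the origin, and satisfies $\A u_h=\delta_0w$. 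By \eqref{eq:conv_ker} (as in the proof of necessity in Theorem~\ref{thm:main_infty}) we have $\D^{k-n}u_h=\check L w=H_0w+\log|\cdot|\,\mathcal{L}w$ with $H_0$ essentially bounded; since $\mathcal{L}w\neq0$, this forces $\D^{k-n}u_h\notin\lebe^\infty(\ball_r(0))$ for every $r>0$.

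Next I would set up the Baire scheme in $\mathfrak{X}\coloneqq\bv^\A(\R^n)$, a complete metric space under the norm $\|\cdot\|_{\lebe^1(\R^n)}+|\A\cdot|(\R^n)$; for $u\in\mathfrak{X}$ the derivative $\D^{k-n}u$ is a well--defined $\lebe^1_{\locc}$--function by Lemma~\ref{lem:ell=>sob}. Fixing a dense subset $\{x_l\}_{l=1}^\infty\subset\R^n$, I would consider, for positive integers $l,m,L$,
\begin{align*}
Z_{l,m,L}\coloneqq\{u\in\mathfrak{X}\colon\|\D^{k-n}u\|_{\lebe^\infty(\ball(x_l,m^{-1}))}\leq L\},\qquad Z_{l,m,\infty}\coloneqq\bigcup_{L}Z_{l,m,L}.
\end{align*}
That each $Z_{l,m,\infty}$ is a \emph{proper} subspace of $\mathfrak{X}$ follows exactly as in Lemma~\ref{lem:nec_canc_k}: choosing $\rho\in\hold^\infty_c$ with $\rho\equiv1$ near $x_l$, the Leibniz rule together with the regularity of $u_h$ away from the origin (Lemma~\ref{lem:ell=>sob}) gives $\rho\,u_h(\cdot-x_l)\in\mathfrak{X}$, while $\D^{k-n}(\rho\,u_h(\cdot-x_l))$ coincides near $x_l$ with $\D^{k-n}u_h(\cdot-x_l)$ and is therefore unbounded on $\ball(x_l,m^{-1})$. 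Hence each $Z_{l,m,\infty}$, and a fortiori each $Z_{l,m,L}$, has empty interior. For closedness of $Z_{l,m,L}$: if $u_i\in Z_{l,m,L}$ and $u_i\to u$ in $\mathfrak{X}$, then $u_i\to u$ in $\lebe^1(\R^n)$, so $\D^{k-n}u_i\to\D^{k-n}u$ in $\mathscr{D}^\prime$; the sequence $(\D^{k-n}u_i)$ is bounded in $\lebe^\infty(\ball(x_l,m^{-1}))=(\lebe^1(\ball(x_l,m^{-1})))^*$, hence a subsequence converges weak-$*$ to some $g$ with $\|g\|_{\lebe^\infty}\leq L$; since weak-$*$ convergence implies convergence in $\mathscr{D}^\prime$, we get $g=\D^{k-n}u$ on $\ball(x_l,m^{-1})$, so $u\in Z_{l,m,L}$.

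Finally, by Baire's Category Theorem there is $u^\ast\in\mathfrak{X}\setminus\bigcup_{l,m,L}Z_{l,m,L}$, so $\|\D^{k-n}u^\ast\|_{\lebe^\infty(\ball(x_l,m^{-1}))}=\infty$ for all $l,m$; by density of $\{x_l\}$ and the inclusion of smaller balls into larger ones, this forces $\D^{k-n}u^\ast\notin\lebe^\infty(\ball_r(x))$ for every $x\in\R^n$ and every $r>0$, hence $\D^{k-n}u^\ast\notin\taylor^{n,\infty}(x)$ for \emph{every} $x\in\R^n$. Since $u^\ast\in\bv^\A(\R^n)\subset\bv^\A_{\locc}$, this contradicts the hypothesis and completes the proof. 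I expect the only genuinely non--routine point to be the closedness step: the Lorentz--Fatou/Riesz--potential argument of Lemma~\ref{lem:nec_canc_k} really does break down at $j=n$ because of the logarithmic growth of $\check L$, and one must check that weak-$*$ compactness of $\lebe^\infty$--balls is an adequate substitute — which it is, since only distributional stability of the limit (which follows from $\lebe^1$--convergence) is needed to identify the weak-$*$ limit with $\D^{k-n}u$.
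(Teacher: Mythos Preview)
Your argument is correct and follows the same Baire category scheme as the paper, but you handle the closedness of $Z_{l,m,L}$ by a genuinely different (and more elementary) device. The paper does \emph{not} abandon Riesz potential estimates at $j=n$: it observes that \eqref{eq:L1_loc_conv} is still available with $j=n-1$, giving control of $D^{k-n+1}v$ in $\lebe^p(\ball)$ for $p<n$, then uses $|D^{k-n}v|\lesssim I_1|D^{k-n+1}v|$ and a local Riesz estimate to upgrade to $D^{k-n}v\in\lebe^q(\ball)$ for all $q<\infty$; this yields $\lebe^s_{\locc}$--convergence of $D^{k-n}u_i$ for every finite $s$, hence a.e.\ convergence on a subsequence, and closedness follows pointwise. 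Your route via weak-$*$ sequential compactness of norm--balls in $\lebe^\infty(\ball(x_l,m^{-1}))=(\lebe^1(\ball(x_l,m^{-1})))^*$ (legitimate since $\lebe^1$ of a ball is separable) together with distributional identification of the limit is cleaner and avoids the two--step potential argument entirely; the paper's approach, on the other hand, gives the stronger intermediate conclusion $D^{k-n}u_i\to D^{k-n}u$ in every $\lebe^s_{\locc}$, which is not needed here but is of independent interest. Two further minor differences: the paper restricts to $\mathfrak{X}=\{u\in\bv^\A(\R^n):\spt u\subset 2\bar B\}$ rather than all of $\bv^\A(\R^n)$, and it disposes of $n=1$ at the outset (since \eqref{eq:mistery_cond2} is automatic in odd dimensions), whereas your contradiction hypothesis is simply vacuous in that case.
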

\begin{proof}
If $n=1$, condition~\eqref{eq:mistery_cond2} follows by ellipticity. Let $n>1$. We assume that $\A$ fails condition~\eqref{eq:mistery_cond2} and consider $u_h\in\bv^\A_{\locc}$ as in the proof of necessity for Theorem~\ref{thm:main_infty} (see also Proposition~\ref{prop:char_mist_cond}), so that $\D^{k-n}u_h\notin\lebe^{\infty}(\ball_r(0))$ for any $r>0$. We slightly modify the proof of Lemma~\ref{lem:nec_canc_k} by choosing, for $B\coloneqq B(0,1)$,
\begin{align*}
\mathfrak{X}\coloneqq\{u\in \bv^\A(\R^n)\colon \spt u\subset 2\bar B\},
\end{align*}
which is Banach if endowed with the subspace norm. Let $\{x_l\}_{l=1}^\infty$ be a dense subset of $B$ and consider the subsets
\begin{align*}
Z_{l,m,L}\coloneqq \{u\in \mathfrak{X}\colon \|\D^{k-n}u\|_{\lebe^{\infty}(\ball(x_l,m^{-1}))}\leq L\}
\end{align*}
for positive integers $m>1,\,l,L$, and the subspace
\begin{align*}
Z_{l,m,\infty}\coloneqq \{u\in \mathfrak{X}\colon \|\D^{k-n}u\|_{\lebe^{\infty}(\ball(x_l,m^{-1}))}<\infty\}.
\end{align*}
We check that the sets $Z_{l,m,L}$ are nowhere dense. Let $u_i\in Z_{l,m,L}$ be such that $u_i\rightarrow u$ in $\mathfrak{X}$.  
We collect from \eqref{eq:L1_loc_conv} with $j=n-1>0$ that, for any $1\leq p<n$ we have
\begin{align}\label{eq:Lp_loc_est}
\|\D^{k-n+1}v\|_{\lebe^p(B)}\leq C(B)|\A v|(\R^n)
\end{align}
for any  $v\in\mathfrak{X}$. By Lemma \ref{lem:conv} for $\A=D^{k-n+1}$ and $j=n-1$, we have that for a $(1-n)$--homogeneous, smooth convolution kernel $K$
\begin{align*}
|D^{k-n}v|=|K\star D^{k-n+1}v|\lesssim I_1|D^{k-n+1}v|
\end{align*}
for $v\in\mathfrak{X}$. By \cite[Lem.~7.12]{GT}, for any $1\leq q<np/(n-p)$ and \eqref{eq:Lp_loc_est} we have
\begin{align*}
\|D^{k-n}v\|_{\lebe^q(B)}\leq C(B)\|D^{k-n+1}v\|_{\lebe^p(\ball)}\leq C(B) |\A v|(\R^n)
\end{align*}
for $v\in\mathfrak{X}$. In particular, $\D^{k-n}u_i\rightarrow\D^{k-n} u$ in $\lebe^s_{\locc}$ for any $1\leq s<\infty$, so $\D^{k-n}u_i$ converges $\mathscr{L}^n$--a.e. to $\D^{k-n}u$ on a subsequence which we do not relabel. It follows that $\mathscr{L}^n$--a.e. in $B(x_l,m^{-1})$, we have that $|\D^{k-n}u|\leq L$, so $Z_{l,m,L}$ is closed. 

Next, we consider cut--off functions $\rho_{l,m}\in\hold^\infty_c$ such that $\rho_{l,m}=1$ in $B(x_l,m^{-1})$. It follows that $\rho_{l,m}u_h(\cdot-x_l)\in \bv^\A(\R^n)$ by the regularity of $u_h$ away from zero and the Leibniz rule (it is still the case that $u_h=K^\A w$, as in the proof of necessity for Theorem~\ref{thm:main_infty} from Section~\ref{sec:proof_infty}). Moreover, since $m\geq2$ and $x_l\in B$, one can easily arrange that the support of $\rho_{m,l}$ is contained in $2\bar B$, so $\rho_{l,m}u_h(\cdot-x_l)\in \mathfrak{X}$. However, $\D^{k-n}u_h(\cdot-x_l)\notin\lebe^{\infty}(\ball(x_l,m^{-1}))$, so that $Z_{l,m,\infty}$ is a proper subspace of $\mathfrak{X}$. Therefore, the sets $Z_{l,m,L}\subset Z_{l,m,\infty}$ have empty interior, so we can conclude by Baire's Category Theorem.
\end{proof}

\section{Appendix}

\subsection{Lorentz--differentiability of $\bv^\A$--maps}\label{sec:weak_est}
When presenting the main results on $\lebe^p$--Taylor expansions in the Introduction we refrained from presenting them on the full Lorentz $\lebe^{p,q}$--scale in order to not overcomplicate the statements. However, presenting the results in this manner reveals, yet again, a very clear connection between $\lebe^p_{\mathrm{weak}}$--differentiability and weak--type estimates as well as $\lebe^{p,q}$--differentiability, $q<\infty$, and strong--type estimates. 

We begin by extending the definition of the $\taylor^{k,p}$--spaces to fit the Lorentz scale:
\begin{definition}
Let $\Omega\subset\R^n$ be an open set, $x\in\Omega$, $1\leq p\leq\infty$, $1\leq q\leq \infty$. A measurable map $u\colon\Omega\rightarrow V$ lies in $\taylor^{k,(p,q)}(x)$ if there exists a polynomial $P_x^ku$ of degree at most $k$ such that
\begin{align*}
\|R^k_x u\|_{\lebe^{p,q}(\ball_r(x))}=o\left(r^{k+n/p}\right)\quad\text{ as }r\downarrow0,
\end{align*}
where $R^k_xu\coloneqq u-P^k_xu$. If this is the case, we say that $u$ has a \emph{$k$--th order $\lebe^{p,q}$--Taylor expansion} $P^k_x u$ at $x$.
\end{definition}
Here for $p=\infty$ we use the extension of the Lorentz scale, as introduced in \cite{BMB}, which is such that $\lebe^{\infty,1}_{\locc}\hookrightarrow\lebe^{\infty,q}_{\locc}\hookrightarrow\lebe^{\infty,\infty}_{\locc}$. Moreover,
\begin{align*}
\lebe^{\infty,1}=\lebe^\infty\qquad\text{ and }\qquad\lebe^{\infty,\infty}=\lebe^\infty_{\mathrm{weak}},
\end{align*}
where the weak--$\lebe^\infty$ space was introduced in \cite{BDVS}. We also recall the formal definition of the $\lebe^{\infty,q}$--spaces, which are not linear spaces in general (see also \cite{MP}).
\begin{definition}[{\cite[Def.~2.1]{BMB}}]
Let $\Omega\subset\R^n$, $|\Omega|\coloneqq\mathscr{L}^n(\Omega)$, $0<q\leq \infty$. We say that $f\in\lebe^{\infty,q}(\Omega)$ if $f\colon\Omega\rightarrow\R$ is measurable and the quantity
\begin{align*}
\|f\|_{\lebe^{\infty,q}(\Omega)}&\coloneqq\left(\int_0^{|\Omega|}\left[f^{\ast\ast}(t)-f^\ast(t)\right]^q\frac{\dif t}{t}\right)^{1/q}\qquad\text{for }q<\infty\\
\|f\|_{\lebe^{\infty,\infty}(\Omega)}&\coloneqq\sup_{0<t<|\Omega|}\left[f^{\ast\ast}(t)-f^\ast(t)\right]
\end{align*}
is finite. Here $f^{\ast}
$ denotes the \emph{decreasing rearrangement} of $f$,
\begin{align*}
f^\ast(t)\coloneqq\inf\{\lambda>0\colon\mathscr{L}^n\{x\in\Omega\colon|f(x)|>\lambda\}\leq t\}
\end{align*}
and $f^{**}$ denotes the \emph{maximal function}
\begin{align*}
f^{**}(t)\coloneqq\dfrac{1}{t}\int_0^tf^*(s)\dif s
\end{align*}
for $t\in (0,|\Omega|)$.
\end{definition}


We make the convention $n/0=\infty$. The methods of the present paper are readily adapted to give the following unified extensions of Theorems~\ref{thm:main_k=1},~\ref{thm:main_k_diff},~\ref{thm:sub_crit}, which, despite their rather intricate statements, tell the story of this paper best:
\begin{theorem}[Weak--type]
Let $\A$ be as in \eqref{eq:Ak}, $1\leq j\leq \min\{k,n\}$. The following are equivalent:
\begin{enumerate}
\item For all $u\in\bv^\A_{\locc}(\R^n)$, we have that 
\begin{align*}
\D^{k-j}u\in \taylor^{j,(n/(n-j),\infty)}(x)\quad\text{ for }\mathscr{L}^n\text{--a.e. }x\in\R^n.
\end{align*}
\item $\A$ is elliptic.
\end{enumerate}
\end{theorem}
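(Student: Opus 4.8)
The plan is to re-run the proof of Theorem~\ref{thm:main_k_diff} with every strong norm replaced by the corresponding weak norm $\lebe^{n/(n-j),\infty}=\lebe^{n/(n-j)}_{\mathrm{weak}}$ (and $\lebe^{\infty,\infty}=\lebe^\infty_{\mathrm{weak}}$ when $j=n$), using in place of the sharp Sobolev inequalities \eqref{eq:VS_j}, \eqref{eq:VS_k=n} their \emph{weak--type} analogues, which hold for \emph{every} elliptic operator; this is precisely why neither cancellation nor \eqref{eq:mistery_cond2} appears in the statement.

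For sufficiency, assume $\A$ elliptic. The only new ingredient is: for all compactly supported $\mu\in\mathcal{M}(\R^n,W)$ and all $1\leq j\leq\min\{k,n\}$,
\begin{align}\label{eq:planweak}
\|\D^{k-j}(K^\A\star\mu)\|_{\lebe^{n/(n-j),\infty}}\lesssim|\mu|(\R^n),
\end{align}
with $K^\A$ from Lemma~\ref{lem:conv} (and $\D^{k-n}(K^\A\star\mu)$ read off \eqref{eq:conv_ker} when $j=n$). For $1\leq j\leq\min\{k,n-1\}$ this is the classical weak--$(1,n/(n-j))$ bound for fractional integration of measures \cite[Ch.~V]{Stein}, since $\D^{k-j}K^\A$ is $(j-n)$--homogeneous and smooth away from $0$ by Lemma~\ref{lem:conv}. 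For $j=n\leq k$, Formula~\eqref{eq:conv_ker} gives $\D^{k-n}(K^\A\star\mu)=H_0\star\mu+\log|\cdot|\star(\mathcal{L}\mu)$: the first term lies in $\lebe^\infty\hookrightarrow\lebe^{\infty,\infty}$ with the right bound, and the logarithmic potential of a finite measure is locally in $\lebe^{\infty,\infty}$ with norm $\lesssim|\mu|(\R^n)$ (the weak--$\lebe^\infty$ replacement of Theorem~\ref{thm:main_infty}; crucially, no cancellation enters). Granting \eqref{eq:planweak}, the argument proceeds verbatim as for Theorem~\ref{thm:main_k_diff}: pick $x$ with $\D^{k-l}u\in\taylor^{l,1}(x)$ for $l=1,\dots,k$ and \eqref{eq:algebra_Au} holding (Lemmas~\ref{lem:ell=>sob}, \ref{lem:CZ_lemma}, \ref{lem:algebra_Au}); for a cut--off $\rho\equiv1$ on $\ball_r(x)$, supported in $\ball_{2r}(x)$, the representation $\rho v=K^\A\star\A(\rho v)$ (as in Lemma~\ref{lem:algebra_Au}), together with \eqref{eq:planweak} and the Leibniz rule, yields
\begin{align*}
\|\D^{k-j}v\|_{\lebe^{n/(n-j),\infty}(\ball_r(x))}\lesssim|\A v|(\overline{\ball_{2r}(x)})+\sum_{l=1}^k r^{-l}\|\D^{k-l}v\|_{\lebe^1(\ball_{2r}(x))};
\end{align*}
taking $v\coloneqq R^k_x u$, multiplying by $r^{j-n}$ (the $\lebe^{n/(n-j),\infty}$--quasinorm dilates with factor $r^{n/p}=r^{n-j}$, with no rescaling needed for $j=n$), and using Lebesgue differentiation of $\A u$ and the $\taylor^{l,1}$--property shows every term is $o(r^n)$. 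Hence $\D^{k-j}u\in\taylor^{j,(n/(n-j),\infty)}(x)$ for $\mathscr{L}^n$--a.e.\ $x$.

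For necessity, suppose $\A$ is \emph{not} elliptic and fix $0\neq\xi\in\R^n$, $0\neq v\in V$ with $\A[\xi]v=0$, so every $x\mapsto f((x-x_0)\cdot\xi)v$ is $\A$--free. Since $\lebe^{n/(n-j),\infty}$ is strictly larger than $\lebe^{n/(n-j)}$, the power profile used in Lemma~\ref{lem:nec_ell_k} is now itself admissible, and for $j=n$ the space $\lebe^{\infty,\infty}$ even contains $\log|\cdot|$; one therefore takes a logarithmically enhanced, compactly supported $g\in\lebe^1_{\locc}(\R)$ whose leading singularity at $0$ gives
\begin{align*}
g^{(k-j)}(t)\sim|t|^{-(n-j)/n}\bigl|\log|t|\bigr|\ \ (j<n),\qquad g^{(k-j)}(t)\sim\bigl(\log|t|\bigr)^2\ \ (j=n),
\end{align*}
with \emph{positive} leading term (arranged by a factor $(\operatorname{sgn}t)^{k-j}$ as in Lemma~\ref{lem:nec_ell_k}, the subleading terms being harmless). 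For $\{x_l\}$ dense of controlled growth, $u(x)\coloneqq\sum_l2^{-l}g((x-x_l)\cdot\xi)v$ satisfies $\A u=0$ and $u\in\lebe^1_{\locc}$ (the side--length estimate of Lemma~\ref{lem:nec_ell_k}), hence $u\in\bv^\A_{\locc}$; but near each hyperplane $\{x\cdot\xi=x_l\cdot\xi\}$ the sum $\D^{k-j}u$ is bounded below by a single positive singular term, which a direct distribution--function computation shows is not in $\lebe^{n/(n-j),\infty}$ of any ball and stays out of it after subtracting a bounded polynomial. As $\{x_l\cdot\xi\}$ is dense in $\R$, $\D^{k-j}u\notin\taylor^{j,(n/(n-j),\infty)}(x)$ for every $x$, a contradiction.

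The main obstacle is exactly this construction: manufacturing a \emph{borderline--divergent} plane wave---logarithmically worse than the one in Lemma~\ref{lem:nec_ell_k}, and with a further $\log$ in the case $j=n$ to defeat $\lebe^{\infty,\infty}$---that still lies in $\lebe^1_{\locc}$, and checking that the resulting profiles genuinely evade $\lebe^{n/(n-j),\infty}_{\locc}$ uniformly over balls. A secondary technical caveat is that $\lebe^{\infty,\infty}$ is not a linear space, so the $j=n$ estimates in the sufficiency argument are cleanest when applied directly to the convolution $K^\A\star\A(\rho v)$ and phrased through the functional $f^{\ast\ast}-f^{\ast}$, bypassing any mollification/Fatou step.
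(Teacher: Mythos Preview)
Your plan is correct and matches the paper's approach. The paper explicitly singles out only the $j=n$ sufficiency step as non--routine, handling it via Proposition~\ref{prop:weak_est_n}: rather than splitting $D^{k-n}(K^\A\star\mu)$ through \eqref{eq:conv_ker} as you do, the paper bounds $\|D^{k-n}u\|_{\lebe^{\infty,\infty}}$ by $\|D^{k-n+1}u\|_{\lebe^{n,\infty}}$ using the embedding $\dot{\sobo}{^{1}}\lebe^{n,\infty}\hookrightarrow\lebe^{\infty,\infty}$ from \cite{BMB}, and then invokes the standard weak--$(1,n)$ bound for $I_{n-1}$; this sidesteps entirely the addition issue in $\lebe^{\infty,\infty}$ that you flag in your caveat. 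Your decomposition route also works---adding an $\lebe^\infty$ function to an $\lebe^{\infty,\infty}$ function shifts $f^{**}-f^*$ by at most $2\|\cdot\|_{\lebe^\infty}$, so the bounded convolution with $H_0$ is harmless---but the paper's argument is slightly cleaner.

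On necessity, you are in fact more careful than the paper, which simply asserts the adaptation of Lemma~\ref{lem:nec_ell_k} is straightforward. You correctly observe that the bare power profile $|t|^{-(n-j)/n}$, as a plane wave, \emph{does} lie in $\lebe^{n/(n-j),\infty}_{\locc}$ (its distribution function on a cube decays like $\lambda^{-n/(n-j)}$), so the series $\sum_l 2^{-l}g^{(k-j)}((x-x_l)\cdot\xi)$ built from it is still in $\lebe^{n/(n-j),\infty}_{\locc}$ and the argument of Lemma~\ref{lem:nec_ell_k} does not go through verbatim. Your logarithmic enhancement $|t|^{-(n-j)/n}|\log|t||$ (respectively $(\log|t|)^2$ when $j=n$) is exactly the right fix: a direct distribution--function computation shows each term already fails $\lebe^{n/(n-j),\infty}$ (resp.\ $\lebe^{\infty,\infty}$) on any ball meeting its singular hyperplane, and positivity plus density of the $\{x_l\}$ then forces failure everywhere.
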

\begin{theorem}[Strong--type]
Let $\A$ be as in \eqref{eq:Ak}, $1\leq q<\infty$. Then:
\begin{enumerate}
\item If $1\leq j\leq \min\{k,n-1\}$ and $1<q<\infty$, the following are equivalent:
\begin{enumerate}
\item[$(\mathrm{i})$] For all $u\in\bv^\A_{\locc}(\R^n)$, we have that
\begin{align*}
\D^{k-j}u\in \taylor^{j,(n/(n-j),q)}(x)\quad\text{ for }\mathscr{L}^n\text{--a.e. }x\in\R^n.
\end{align*}
\item[$(\mathrm{ii})$] $\A$ is elliptic and canceling.
\end{enumerate}
\vspace{3pt}
\item If $k\geq n$ and $1\leq q<\infty$, the following are equivalent:
\begin{enumerate}
\item[$(\mathrm{i})$] For all $u\in\bv^\A_{\locc}(\R^n)$, we have that
\begin{align*}
D^{k-n}u\in\taylor^{n,(\infty,q)}(x)\quad\text{ for }\mathscr{L}^n\text{--a.e. }x\in\R^n.
\end{align*}
\item[$(\mathrm{ii})$] $\A$ is elliptic and satisfies condition~\eqref{eq:mistery_cond2}.
\end{enumerate}
\end{enumerate}
\end{theorem}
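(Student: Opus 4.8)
The plan is to reduce both the weak-type and strong-type statements to the corresponding critical and sub-critical differentiability results already established in Theorems~\ref{thm:main_k=1}, \ref{thm:main_k_diff}, \ref{thm:sub_crit}, by replacing the Lebesgue norm $\lebe^p$ with the Lorentz norm $\lebe^{p,q}$ throughout the relevant estimates. For the weak-type theorem, the point is that ellipticity buys us, via Lemma~\ref{lem:conv} and Lemma~\ref{lem:algebra_Au}, the representation $\D^{k-j}u = \D^{k-j}K^\A\star\A u$ with $\D^{k-j}K^\A$ being $(j-n)$-homogeneous; since the Riesz potential $I_j$ maps $\lebe^1$ into $\lebe^{n/(n-j),\infty}$ (and not into $\lebe^{n/(n-j)}$), the sub-critical argument for Theorem~\ref{thm:sub_crit} produces precisely the $\lebe^{n/(n-j),\infty}$-Taylor expansion at the critical exponent. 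The necessity of ellipticity follows verbatim from Lemma~\ref{lem:nec_ell_k}, since the plane-wave map constructed there fails to lie in \emph{any} $\lebe^{p}$ locally, hence fails to lie in $\lebe^{n/(n-j),\infty}$ locally as well (using the inclusion $\lebe^{p_1,\infty}_{\locc}\subset\lebe^{p_2,1}_{\locc}$ for $p_2<p_1$ recorded in Section~\ref{sec:fspaces} and the fact that $g(t)=|t|^{-1/p}$ is exactly borderline).

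For the strong-type theorem, case (a) follows by the argument of Lemma~\ref{lem:suff_EC} and the sufficiency part of the proof of Theorem~\ref{thm:main_k_diff}\ref{itm:a}, except that one invokes the Lorentz--Sobolev embedding \cite[Thm.~8.5]{VS} (respectively the iterated version of \cite[Thm.~1.3]{VS} combined with the Lorentz--space Gagliardo--Nirenberg inequalities) in place of \eqref{eq:VS_j}; this yields the key localized estimate
\begin{align*}
\|\D^{k-j}v\|_{\lebe^{n/(n-j),q}(\ball_r(x))}\lesssim|\A v|(\overline{\ball_{2r}(x)})+\sum_{l=1}^k r^{-l}\|\D^{k-l}v\|_{\lebe^1(\ball_{2r}(x))},
\end{align*}
after which applying it to $R^k_x u$ and using Lebesgue differentiation together with $\lebe^1$-differentiability of the lower derivatives (from Lemmas~\ref{lem:CZ_lemma}, \ref{lem:algebra_Au}) gives the $\lebe^{n/(n-j),q}$-Taylor expansion. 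The necessity of ellipticity is Lemma~\ref{lem:nec_ell_k} again (with the same plane-wave and the observation that a nowhere-$\lebe^p$-integrable function is nowhere $\lebe^{p,q}$-integrable for $q<\infty$), and the necessity of cancellation is the Baire category argument of Lemma~\ref{lem:nec_canc_k}, where one only needs to replace $\lebe^{n/(n-j)}$ by $\lebe^{n/(n-j),q}$ in the definition of the sets $Y_{l,m,L}$; the Fatou property holds in $\lebe^{p,q}$ for $q<\infty$, and the singular map $u_h=K^\A w$ from Lemma~\ref{lem:canc_char} fails to lie in $\lebe^{n/(n-j),q}$ near zero precisely because $\D^{k-j}u_h$ is $(j-n)$-homogeneous and non-zero, so its rearrangement behaves like $t^{(j-n)/n}$, which is not $q$-th power integrable against $\dif t/t$ near $0$ when $q<\infty$.

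For case (b), the analogous localized estimate at the $\lebe^{\infty,q}$ endpoint is
\begin{align*}
\|\D^{k-n}v\|_{\lebe^{\infty,q}(\ball_r(x))}\lesssim|\A v|(\overline{\ball_{2r}(x)})+\sum_{l=1}^k r^{-l}\|\D^{k-l}v\|_{\lebe^1(\ball_{2r}(x))},
\end{align*}
and its derivation should mirror the proof of Theorem~\ref{thm:main_infty}: using \eqref{eq:conv_ker} one writes $\D^{k-n}v = H_0\star\A v + (\log|\cdot|\,\mathcal{L})\star\A v$, the first term being controlled in $\lebe^\infty\subset\lebe^{\infty,q}$ by Young's inequality, and the second term being handled by Lemma~\ref{lem:BVS_var} applied to $\varphi=\log|\cdot|\,\mathcal{L}^*(\cdot)$ — but now one must track the $\lebe^{\infty,q}$-quasinorm rather than the sup-norm, which requires a refinement of the logarithmic estimate \eqref{eq:log_est} to the $\lebe^{\infty,q}$ scale. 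The necessity directions reuse Lemma~\ref{lem:nec_ell_k} and a Baire argument paralleling Lemma~\ref{lem:nec_mist_cond}, with $\lebe^\infty$ replaced by $\lebe^{\infty,q}$ and using that $u_h$ still fails to be in $\lebe^{\infty,q}$ locally (its derivative is $0$-homogeneous with a genuine $\log$-type singularity, whose $f^{**}-f^*$ does not decay fast enough). \textbf{The main obstacle} I anticipate is precisely the $\lebe^{\infty,q}$-version of the logarithmic convolution estimate in case (b): the space $\lebe^{\infty,q}$ is not a normed space for $q<\infty$ and is defined via the oscillation $f^{**}-f^*$ of the rearrangement, so neither Young's convolution inequality nor the triangle inequality are available in the naive form, and one must instead argue through the explicit structure of the kernel $H_0+\log|\cdot|\mathcal{L}$ together with the cancellation-type identity from Lemma~\ref{lem:BVS_var} to show that condition~\eqref{eq:mistery_cond2} forces the $\lebe^{\infty,q}$-oscillation of $\D^{k-n}v$ to be controlled by $\|\A v\|_{\lebe^1}$; this is the place where one genuinely uses that $q<\infty$, since for $q=\infty$ one only recovers the weak-type statement.
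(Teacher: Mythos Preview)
Your treatment of case~(a) is correct and matches the paper's approach: the Lorentz--Sobolev embedding \cite[Thm.~8.5]{VS} feeds into the localized estimate exactly as in Lemma~\ref{lem:suff_EC}, and the necessity arguments via plane waves and Baire category carry over once one uses the Fatou property for $\lebe^{p,q}$, $q<\infty$.

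For case~(b), however, you have inverted the difficulty. The sufficiency direction --- which you flag as the \emph{main obstacle} --- is in fact immediate and requires no refined $\lebe^{\infty,q}$--version of the logarithmic convolution estimate. The reason is the elementary domain--independent bound
\begin{align*}
\|g\|_{\lebe^{\infty,q}(\Omega)}\leq\|g\|_{\lebe^\infty(\Omega)}\qquad\text{for all }q\geq 1,\ |\Omega|<\infty,
\end{align*}
which follows by noting that $(g^{**}-g^*)^q\leq\|g\|_{\lebe^\infty}^{q-1}(g^{**}-g^*)$ together with the identity $\|g\|_{\lebe^{\infty,1}(\Omega)}=\|g\|_{\lebe^\infty(\Omega)}-\fint_\Omega|g|$ (obtained from $(f^{**})'=(f^*-f^{**})/t$). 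Hence Theorem~\ref{thm:main_k_diff}\ref{itm:b} already gives $D^{k-n}u\in\taylor^{n,\infty}(x)\subset\taylor^{n,(\infty,q)}(x)$, and your proposed direct $\lebe^{\infty,q}$--estimate for $\log|\cdot|\star[\mathcal{L}\A u]$ is unnecessary.

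The one genuinely new ingredient --- and the point the paper singles out as the non--trivial step --- is the \emph{necessity} of condition~\eqref{eq:mistery_cond2}. You correctly identify that this hinges on $D^{k-n}u_h\notin\lebe^{\infty,q}_{\locc}$, but you dismiss it too quickly. Since $D^{k-n}u_h=H_0w+\log|\cdot|\mathcal{L}w$ with $H_0\in\lebe^\infty$ and $\mathcal{L}w\neq 0$, this reduces precisely to the computable fact that $\log|\cdot|\notin\lebe^{\infty,q}_{\locc}$ for $1\leq q<\infty$. This is where $q<\infty$ actually enters: for $q=\infty$ one has $\log|\cdot|\in\lebe^{\infty,\infty}_{\locc}$, recovering only the weak--type statement. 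Once this is in hand, the Baire argument of Lemma~\ref{lem:nec_mist_cond} goes through unchanged.
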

The methods of the paper can indeed be adapted in a straightforward manner, except for two aspects: necessity of condition~\eqref{eq:mistery_cond2} for the strong--differentiability if $1<q<\infty$ (which follows from the computable fact that $\log|\cdot|\notin\lebe^{\infty,q}_{\locc}$ for $1\leq q<\infty$) and sufficiency of ellipticity for weak--differentiability if $j=n\leq k$, which follows easily from the following result, which is probably obvious for experts:
\begin{proposition}\label{prop:weak_est_n}
Let $\A$ as in \eqref{eq:Ak} be elliptic, of order $k\geq n>1$. Then
\begin{align*}
\|D^{k-n}u\|_{\lebe^{\infty,\infty}(\R^n,V\odot^{k-n}\R^n)}\lesssim\|\A u\|_{\lebe^1(\R^n,W)}
\end{align*}
for all $u\in\hold_c^\infty(\R^n,V)$.
\end{proposition}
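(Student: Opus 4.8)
The core idea is that $D^{k-n}u$ behaves like an order-$n$ Riesz (i.e.\ logarithmic) potential of the $\lebe^1$ function $\A u$, so the estimate is the natural endpoint $p\to\infty$ of the weak-type Riesz potential bounds $I_\alpha\colon\lebe^1\to\lebe^{n/(n-\alpha),\infty}$. Concretely, I would start from the convolution representation $D^{k-n}u=\check L\star\A u$ and the structural formula \eqref{eq:conv_ker}, that is $\check L=H_0+\log|\cdot|\,\mathcal L$ with $H_0\in\hold^\infty(\R^n\setminus\{0\})$ bounded and $\mathcal L$ a constant linear map, both established in Section~\ref{sec:proof_infty}. Splitting $D^{k-n}u=H_0\star\A u+\log|\cdot|\star(\mathcal L\A u)$, the first summand is controlled by Young's inequality, $\|H_0\star\A u\|_{\lebe^\infty}\le\|H_0\|_{\lebe^\infty}\|\A u\|_{\lebe^1}$, together with $\|h\|_{\lebe^{\infty,\infty}}\le\|h\|_{\lebe^\infty}$ (since $h^{**}(t)-h^*(t)\le h^{**}(t)\le\|h\|_{\lebe^\infty}$). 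Everything therefore reduces to the logarithmic potential $g\coloneqq\log|\cdot|\star\mu$ with $\mu\coloneqq\mathcal L\A u\in\lebe^1$, which is compactly supported, say $\spt\mu\subseteq B_R$, and has vanishing integral: $\int_{\R^n}\A u=A\bigl(\int_{\R^n}D^ku\bigr)=0$ because $D^ku$ is an exact $k$-th derivative of a compactly supported smooth map and $k\ge1$.

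I would then estimate $g$ by splitting space at radius $2R$. On $B_{2R}$, I use $\int\mu=0$ to subtract the constant $\log(2R)$ and note $|x-y|\le 3R$ whenever $x\in B_{2R}$ and $y\in\spt\mu$, so that $g=f\star\mu$ on $B_{2R}$ with $f\coloneqq\log\tfrac{|\cdot|}{2R}\,\mathbbm{1}_{B_{3R}}$. This $f$ is compactly supported with only a logarithmic singularity, hence $f\in\lebe^{\infty,\infty}(\R^n)$; and since $\sup_{t}(f^{**}(t)-f^*(t))$ is invariant under dilations, its $\lebe^{\infty,\infty}$-(quasi)norm equals the universal constant $\|\log|\cdot|\,\mathbbm{1}_{B_{3/2}}\|_{\lebe^{\infty,\infty}(\R^n)}$, with no $R$-dependence. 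Because $\lebe^{\infty,\infty}$ is a rearrangement-invariant Banach function space \cite{BMB,BDVS}, convolution with an $\lebe^1$ kernel is bounded on it with norm $\le\|\mu\|_{\lebe^1}$ (equivalently, a convolution inequality of O'Neil type at the endpoint $p=\infty$), and the lattice property gives $\|g\mathbbm{1}_{B_{2R}}\|_{\lebe^{\infty,\infty}(\R^n)}=\|(f\star\mu)\mathbbm{1}_{B_{2R}}\|_{\lebe^{\infty,\infty}(\R^n)}\lesssim\|\mu\|_{\lebe^1}$. On $B_{2R}^c$, again using $\int\mu=0$, one has $g(x)=\int_{\R^n}\log\tfrac{|x-y|}{|x|}\,\mu(y)\dif y$, and for $|x|>2R$ the bound $\bigl|\log\tfrac{|x-y|}{|x|}\bigr|\le 2R/|x|\le1$ yields $\|g\mathbbm{1}_{B_{2R}^c}\|_{\lebe^\infty}\le\|\mu\|_{\lebe^1}$, hence the same bound in $\lebe^{\infty,\infty}$. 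Adding the two pieces (triangle inequality for the equivalent Banach norm on $\lebe^{\infty,\infty}$) gives $\|g\|_{\lebe^{\infty,\infty}(\R^n)}\lesssim\|\mu\|_{\lebe^1}\lesssim\|\A u\|_{\lebe^1}$, and combining with the $H_0$-term completes the argument.

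The main difficulty is the global nature of $\lebe^{\infty,\infty}$: it is not well behaved under naive truncation, $\log|\cdot|$ itself is \emph{not} in $\lebe^{\infty,\infty}(\R^n)$ (its tail at infinity is unbounded in measure), and the constants in the intermediate estimates above a priori depend on $R=R(u)$. The split at radius $2R$ together with the vanishing moment $\int\A u=0$ is precisely what removes the tail and all $R$-dependence; apart from that, the proof only needs the functional-analytic input that weak-$\lebe^\infty$ in the sense of \cite{BMB,BDVS} is a dilation-invariant rearrangement-invariant Banach function space on which convolution with an $\lebe^1$ kernel acts boundedly with norm $\le\|\cdot\|_{\lebe^1}$. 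For this last point I would cite \cite{BMB,BDVS} (or supply the short majorization $\int_0^t(f\star g)^*\dif s\le\|g\|_{\lebe^1}\int_0^tf^*\dif s$ via Hardy--Littlewood), and I expect its verification — rather than any geometric estimate — to be the only step requiring genuine care.
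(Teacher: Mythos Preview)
Your reduction via \eqref{eq:conv_ker} to a bounded piece $H_0\star\A u$ plus a logarithmic potential $\log|\cdot|\star(\mathcal{L}\A u)$ is exactly how the paper starts, but the paper dispatches the logarithmic term by a completely different and much shorter route: it quotes the embedding $\|g\|_{\lebe^{\infty,\infty}}\lesssim\|Dg\|_{\lebe^{n,\infty}}$ from \cite[Eq.~(1.1)]{BMB}, notes that $D(I_nf)=I_{n-1}f$, and applies the classical weak-type bound $I_{n-1}\colon\lebe^1\to\lebe^{n,\infty}$. No vanishing moment of $\A u$, no spatial decomposition, no convolution estimate on $\lebe^{\infty,\infty}$ is needed.

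Your direct argument has a real gap. You invoke ``the lattice property'' to pass from $\|(f\star\mu)\mathbbm{1}_{B_{2R}}\|_{\lebe^{\infty,\infty}}$ to a bound via $\|f\star\mu\|_{\lebe^{\infty,\infty}}$, but $\lebe^{\infty,\infty}$ is \emph{not} a lattice: truncation can strictly increase the norm. Indeed the constant function $1$ has $\|1\|_{\lebe^{\infty,\infty}}=0$ (since $1^{**}\equiv 1^*\equiv 1$), whereas $\|\mathbbm{1}_{[0,1]}\|_{\lebe^{\infty,\infty}}=1$; this is the same obstruction familiar from $\bmo$, to which $\lebe^{\infty,\infty}$ is the rearrangement-invariant counterpart \cite{BDVS}. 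A closely related issue undermines your justification of the convolution bound: the Hardy--Littlewood majorization $\int_0^t(f\star\mu)^*\le\|\mu\|_{\lebe^1}\int_0^t f^*$ only gives $(f\star\mu)^{**}\le\|\mu\|_{\lebe^1}f^{**}$, which controls $\sup_t h^{**}(t)$ but says nothing about $\sup_t(h^{**}(t)-h^*(t))$ without a lower bound on $(f\star\mu)^*$ that is simply unavailable. Nor can you fall back on Minkowski's integral inequality, since $\lebe^{\infty,\infty}$ lacks the Fatou property (same example: $\mathbbm{1}_{[0,N]}\uparrow 1$ with norms $1\not\to 0$). Your vanishing-moment idea and scale-invariant kernel are attractive, and one can probably repair the truncation step by writing $g=(f\star\mu)+(g-f\star\mu)$ globally and checking the correction is in $\lebe^\infty$; but you would still owe a genuine proof that convolution with $\lebe^1$ is bounded on $\lebe^{\infty,\infty}$, and that is not the one-liner you suggest.
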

The inequality follows from Formula~\eqref{eq:conv_ker} and the following bound:
\begin{align}\label{eq:riesz_n}
\|I_nf\|_{\lebe^{\infty,\infty}}\lesssim\|f\|_{\lebe^1}
\end{align}
for $f\in\hold^\infty_c(\R^n)$. Here $I_nf\coloneqq f\star\log|\cdot|$ is the analytic extension of the Riesz potential $I_\alpha$, $0<\alpha<n$, see e.g. \cite[Ch.~2]{Samko}.
\begin{proof}[Proof of \eqref{eq:riesz_n}]
By \cite[Eq.~(1.1)]{BMB},
\begin{align*}
\|I_nf\|_{\lebe^{\infty,\infty}}\lesssim\|DI_nf\|_{\lebe^{n,\infty}}=\|I_{n-1}f\|_{\lebe^{n,\infty}}\lesssim\|f\|_{\lebe^1},
\end{align*}
where for last inequality we applied the standard weak--type estimate for $I_{n-1}$.
\end{proof}
The case $n=1$ in Proposition~\ref{prop:weak_est_n} has to be ruled out if we want to use boundedness of $I_{n-1}$ between $\lebe^1$ and $\lebe^n_{\mathrm{weak}}$, but this is no restriction, since in that case Theorem~\ref{thm:main_infty} applies and the strong estimate is available (every elliptic operator satisfies condition~\eqref{eq:mistery_cond2} in odd dimensions).
\begin{remark}
We conclude by recalling \cite[Open~Prob.~8.3]{VS}, which can be reformulated as, whether for elliptic operators $\A$, is it the case that cancellation implies
\begin{align*}
\|D^{k-j}u\|_{\lebe^{n/(n-j),1}}\lesssim\|\A u\|_{\lebe^1}
\end{align*}
for $u\in\hold^\infty_c(\R^n,V)$. Here $1\leq j\leq \min\{n-1,k\}$. What the viewpoint we took in section indicates is that the result of Theorem~\ref{thm:main_infty} applies to the endpoint $j=n\leq k$, though we do not see a connection with the aforementioned problem.
\end{remark}
\subsection{Added--in remarks}
The following two facts became known to the author after the submission of this paper. However, their connection to the results presented here seems very strong and definitely rounds off the exposition.
\begin{remark}[On a question of H. Brezis]
	Just days after the submission of this paper, the author became aware of the recent preprint \cite{APR}, 
	where it is proved, among other results, that for a locally integrable scalar field $u$ on $\R^n$ such that its Laplacian $\Delta u$ is a Radon measure, we have that 
	\begin{align}\label{eq:APR}
		\Delta^{ac} u=\mathrm{tr}(\nabla^2u)\quad\mathscr{L}^n\text{--a.e.};
	\end{align}
	here we used the notation in Section~\ref{sec:prel_appdiff}. This fact and a standard property of approximate derivatives on level sets are used in \cite[Sec.~9]{APR} to answer a question of H. Brezis. Since the Laplacian operator is elliptic, our proof of Lemma~\ref{lem:algebra_Au} seems to give an alternative approach to prove \eqref{eq:APR}. Similar and related questions were also considered in \cite{MM,CV,V}.
\end{remark}
\begin{remark}[Continuity and $n$--th order operators]
	Very recently, the question concerning the continuity of $\bv^\A$--maps raised at the end of Section~\ref{sec:WA1} was solved by the author and A. Skorobogatova in the preprint \cite{RaSk}. 
	It was shown in \cite[Thm.~1.1]{RaSk} that an elliptic operator $\A$ of order $n$ is canceling if and only if the inclusion $\bv^\A(\R^n)\subset\hold_0(\R^n,V)$ holds. This is in sharp contrast with Theorem~\ref{thm:sobo}, where it is shown that $\A$ is weakly canceling if and only if $\bv^\A(\R^n)\subset\lebe^\infty(\R^n,V)$. In particular, for elliptic operators of oder $k\geq n$, we see that J. Van Schaftingen's canceling condition, which is equivalent with $\bv^\A(\R^n)\subset\dot{\sobo}{^{k-j,n/(n-j)}}(\R^n,V)$ for $0<j<n$, does \emph{not} appear on the Sobolev/Lipschitz scale at the endpoint $j=n$, but on the $\hold^m$--scale.
\end{remark}
\small{\subsection*{Acknowledgement} The author warmly thanks Jan Kristensen and Jean Van Schaftingen for helpful comments, suggestions, and stimulating discussions. The author also thanks the anonymous referee, whose comments significantly improved the quality of the manuscript.  }

\end{document}